\documentclass{amsart}
\usepackage[english]{babel}
\usepackage{amsmath,amsthm,amssymb,amsfonts}
\usepackage{hyperref}

\numberwithin{equation}{section}

\newtheorem{theo}{Theorem}[section]
\newtheorem{prp}[theo]{Proposition}
\newtheorem{lem}[theo]{Lemma}
\newtheorem{cor}[theo]{Corollary}

\newcommand{\tc}{\,:\,}                             % such that
\newcommand{\defeq}{\stackrel{\mathrm{def}}{=}}

                       % identity function
\newcommand{\chr}{\chi}                             % characteristic function
\DeclareMathOperator{\supp}{\mathrm{supp}}          % support

\newcommand{\R}{\mathbb{R}}                         % reals
\newcommand{\N}{\mathbb{N}}                         % naturals
\newcommand{\Z}{\mathbb{Z}}                         % integers
\newcommand{\C}{\mathbb{C}}                         % complexes
                         % rationals
\newcommand{\T}{\mathbb{T}}                         % torus

\newcommand{\D}{\mathcal{D}}                        % compactly supported smooth (test) functions
\newcommand{\loc}{\mathrm{loc}}                     % L^1_loc

\newcommand{\Diff}{\mathfrak{D}}                    % left-invariant differential operators
\newcommand{\dstar}{\circ}                          % conjugation with *
\newcommand{\dtwist}{\bullet}                       % twisted (on G x G), corresponding to conjugate via action G x G -> G

\newcommand{\LA}{\mathrm{L}}

\newcommand{\RA}{\mathrm{R}}

\newcommand{\lie}{\mathfrak}                        % Lie algebras
\DeclareMathOperator{\Ad}{\mathrm{Ad}}              % Adjoint
\DeclareMathOperator{\ad}{\mathrm{ad}}              % adjoint
            % automorphisms
            % endomorphisms
              % trace
\DeclareMathOperator{\Span}{\mathrm{span}}          % linear span

          % spectrum
                      % bounded operators
\newcommand{\HS}{\mathrm{HS}}							          % Hilbert-Schmidt

                      % symmetric algebra
                      % tensor algebra
\newcommand{\UEnA}{\mathrm{U}}                      % universal enveloping algebra

      % projective tensor product
        % Hilbert tensor product
 % epsilon tensor product
%\newcommand{\ntimes}[1]{\mathop{\hat\otimes_{\alpha_{#1}}}}      % p-nuclear tensor product

                    % symmetric (permutation) group

                     % Gelfand transform (in Gelfand pairs?)
                     % Fourier transform
\newcommand{\Kern}{\mathcal{K}}                     % kernel (transform?)

                      % Gelfand spectrum? (uniformare notazioni con l'appendice!!!)

                       % vector space (Fr\'echet, Banach)
                       % another vector space
\newcommand{\HH}{\mathcal{H}}                       % Hilbert space
                      % algebra

\newcommand{\JJ}{\mathcal{J}}                       % subalgebra of ``good'' (exponentially decaying) multipliers
                          % Gamma^1_L = closure of the kernels of J_L in L^1
                          % Gamma^2_L = closure of the kernels of J_L in L^2
                       % joint eigenfunctions of positive type

\newcommand{\HP}[2]{{\rm(I$_{{#1},{#2}}$)}}     % hypothesis I (weak hypothesis, for Mihlin-Hoermander)
\newcommand{\HPK}[2]{{\rm(J$_{{#1},{#2}}$)}}    % hypothesis J (strong hypothesis, for product (Marcinkiewicz) theory)

\newcommand{\strong}{\mathrm{strong}}               % Christ's strong maximal function...

\renewcommand{\ell}{\varrho}

\begin{document}
\title[Joint spectral multipliers on groups of polynomial growth]{Analysis of joint spectral multipliers\\ on Lie groups of polynomial growth}
\author{Alessio Martini}

\begin{abstract}
We study the problem of $L^p$-boundedness ($1 < p < \infty$) of operators of the form $m(L_1,\dots,L_n)$ for a commuting system of self-adjoint left-invariant differential operators $L_1,\dots,L_n$ on a Lie group $G$ of polynomial growth, which generate an algebra containing a weighted subcoercive operator. In particular, when $G$ is a homogeneous group and $L_1,\dots,L_n$ are homogeneous, we prove analogues of the Mihlin-H\"ormander and Marcinkiewicz multiplier theorems.
\end{abstract}

\maketitle

\section{Introduction}\label{section:introduction}

Let $(X,\mu)$ be a measure space, and let $L_1,\dots,L_n$ be (possibly unbounded) self-adjoint operators on $L^2(X,\mu)$ which commute strongly, i.e., which admit a joint spectral resolution $E$ on $\R^n$. Then a joint functional calculus for $L_1,\dots,L_n$ is defined via spectral integration and, for every Borel function $m : \R^n \to \C$, the operator
\[m(L) = m(L_1,\dots,L_n) = \int_{\R^n} m \,dE\]
is bounded on $L^2(X,\mu)$ if and only if the function $m$ is ($E$-essentially) bounded. The characterization of the boundedness of $m(L)$ on other function spaces, such as $L^p(X,\mu)$ for $p \neq 2$, in terms of properties of the function $m$ --- which will be called \emph{(spectral) multiplier} for $L_1,\dots,L_n$ --- is a much more difficult question, even in particular cases. Several problems and results of harmonic analysis fall into this frame, the classical examples being the Mihlin-H\"ormander and Marcinkiewicz theorems for Fourier multipliers on $\R^n$, which give sufficient conditions for the $L^p$ boundedness ($1 < p < \infty$) of joint functions $m(-i\partial_1,\dots,-i\partial_n)$ of the partial derivatives on $\R^n$, in terms of smoothness properties of the multiplier $m$.

Here we are interested in the case $X = G$ is a (connected) Lie group of polynomial volume growth (with a Haar measure $\mu$) and $L_1,\dots,L_n$ belong to the algebra $\Diff(G)$ of left-invariant differential operators on $G$, with particular reference to homogeneous operators $L_1,\dots,L_n$ on a homogeneous (nilpotent) Lie group $G$. For $n = 1$, i.e., for a single operator $L = L_1$, several results of Mihlin-H\"ormander type are known (see, e.g., \cite{mauceri_vectorvalued_1990,christ_multipliers_1991,hebisch_multiplier_1993,mller_spectral_1994,alexopoulos_spectral_1994,hebisch_multiplier_1995,sikora_norms_1999,cowling_spectral_2001,duong_plancherel-type_2002} and references therein), dealing mainly with a sublaplacian $L$ on $G$, or more generally with a positive operator $L$ for which Gaussian-type heat kernel estimates hold. For $n > 1$, instead, most of the known results \cite{mauceri_zonal_1981,mller_marcinkiewicz_1995,mller_marcinkiewicz_1996,fraser_marcinkiewicz_1997,veneruso_marcinkiewicz_2000,fraser_convolution_2001,fraser_fold_2001} refer to a specific class of groups $G$ (namely, the H-type groups) and to specific choices of the operators $L_1,\dots,L_n$ (i.e., sublaplacians and central derivatives); a recent work of Sikora \cite{sikora_multivariable_2009}, which has no such restrictions (in fact it applies to more general settings than Lie groups), is however limited to the case of direct products ($G = G_1 \times \dots \times G_n$ and each $L_j$ operates on a different factor $G_j$ of $G$), so that it does not cover the mentioned results on H-type groups.

In this paper, we propose a quite general setting where spectral multiplier theorems of Mihlin-H\"ormander and Marcinkiewicz type for a system of operators can be obtained. Starting from the \emph{weighted subcoercive operators}\footnote{The definition of weighted subcoercive operator to which we refer in the present work is the one from \cite{martini_spectral}, which is more restrictive than the original one in \cite{ter_elst_weighted_1998}. The restriction appears to be necessary in order to correct an error in \cite{ter_elst_weighted_1998}, as it is explained in \cite[footnote 1]{martini_spectral}. We are not sure if our results remain true under the wider definition.} of ter Elst and Robinson \cite{ter_elst_weighted_1998} --- which are a large class of left-invariant differential operators on a Lie group (including positive elliptic operators, sublaplacians, and positive Rockland operators on homogeneous groups) for which Gaussian-type heat kernel estimates hold --- we define a \emph{weighted subcoercive system} to be a system $L_1,\dots,L_n$ of pairwise commuting, formally self-adjoint, left-invariant differential operators on a connected Lie group $G$ which generate a subalgebra of $\Diff(G)$ containing a weighted subcoercive operator. An ``abstract'' study of weighted subcoercive systems, in relation with the algebraic structure and the representation theory of the environment Lie group, is performed in \cite{martini_spectral}, from which we get in particular that:
\begin{itemize}
\item the members of a weighted subcoercive system $L_1,\dots,L_n$ on $G$ are essentially self-adjoint and commute strongly (so that they admit a joint spectral resolution) in every unitary representation of $G$;
\item the operators of the form $m(L) = m(L_1,\dots,L_n)$ are convolution operators:
\[m(L) \phi = \phi * \Kern_L m = \phi * \breve m\]
for some distribution $\breve m = \Kern_L m$; in fact, if $m$ is bounded and compactly supported, then $\Kern_L m \in L^2(G)$ together with all its left-invariant derivatives;
\item a Plancherel formula holds:
\[\|\Kern_L m\|_{L^2(G)} = \|m\|_{L^2(\R^n,\sigma)}\]
for some regular Borel measure $\sigma$, which is called the \emph{Plancherel measure} associated with $L_1,\dots,L_n$, and whose support is their joint $L^2$ spectrum;
\item if $G$ is a homogeneous group (with automorphic dilations $\delta_t$), a commuting system $L_1,\dots,L_n$ of \emph{homogeneous}, formally self-adjoint left-invariant differential operators is a weighted subcoercive system if and only if $L_1,\dots,L_n$ are \emph{jointly injective} on the smooth vectors $v$ of every non-trivial irreducible representation $\pi$ of $G$:
\[d\pi(L_1) v = \dots = d\pi(L_n) v = 0 \quad\Longrightarrow\quad v = 0\]
(this is a multi-variate analogue of the Rockland condition \cite{rockland_hypoellipticity_1978,helffer_caracterisation_1979}); in this case, we speak of a \emph{homogeneous weighted subcoercive system} and, if $L_1,\dots,L_n$ are homogeneous of degrees $w_1,\dots,w_n$ respectively, then
\begin{equation}\label{eq:homogeneity}
\sigma(\epsilon_t(A)) = t^{Q_\delta} \sigma(A), \qquad \Kern_L (m \circ \epsilon_t) = t^{-Q_\delta} (\Kern_L m) \circ \delta_{t^{-1}},
\end{equation}
where $Q_\delta$ is the homogeneous dimension ($\det \delta_t = t^{Q_\delta}$) and
\[\epsilon_t(\lambda_1,\dots,\lambda_n) = (t^{w_1} \lambda_1,\dots,t^{w_n}\lambda_n)\]
are the dilations on $\R^n$ associated with $L_1,\dots,L_n$.
%\item if $G$ is a homogeneous group with automorphic dilations $\delta_t$ and $L_1,\dots,L_n$ are homogeneous operators of degrees $w_1,\dots,w_n$ respectively, then
%\begin{equation}\label{eq:homogeneity}
%\sigma(\epsilon_t(A)) = t^{Q_\delta} \sigma(A), \qquad \Kern_L (m \circ \epsilon_t) = t^{-Q_\delta} (\Kern_L m) \circ \delta_{t^{-1}},
%\end{equation}
%where $Q_\delta$ is the homogeneous dimension ($\det \delta_t = t^{Q_\delta}$) and $\epsilon_t(\lambda_1,\dots,\lambda_n) = (t^{w_1} \lambda_1,\dots,t^{w_n}\lambda_n)$ are the dilations on $\R^n$ associated with $L_1,\dots,L_n$.
\end{itemize}

In the following, under the hypothesis that $L_1,\dots,L_n$ is a weighted subcoercive system on a Lie group $G$ of polynomial growth of degree $Q_G$, we prove weighted $L^1$ estimates for the kernels $\Kern_L m$ corresponding to multipliers $m$ with compact support, in terms of a Sobolev norm of $m$. If we suppose further that $G$ is a homogeneous group and that $L_1,\dots,L_n$ are homogeneous operators (with degrees $w_1,\dots,w_n$ and associated dilations $\epsilon_t$), then a theorem of Mihlin-H\"ormander type can be obtained: the operator $m(L)$ is of weak type $(1,1)$ and bounded on $L^p(G)$ for $1 < p < \infty$ whenever the multiplier $m$ satisfies an $L^q$ \emph{Mihlin-H\"ormander condition} of order $s$, i.e.,
\begin{equation}\label{eq:mhcondition}
\|m\|_{M_\epsilon W_q^s} \defeq \sup_{t > 0} \| (m \circ \epsilon_t) \,\eta \|_{W_q^s} < \infty,
\end{equation}
with $q \in [2,\infty]$ and
\begin{equation}\label{eq:mhthreshold}
s > \frac{Q_G}{2} + \frac{n-1}{q},
\end{equation}
where $W_q^s(\R^n)$ is the $L^q$ Sobolev space of (fractional) order $s$ and $\eta$ is a non-negative smooth cut-off function on $\R^n$ supported on an annulus centered at the origin. Notice that the condition \eqref{eq:mhcondition} is independent on the choice of the cut-off $\eta$; moreover, an $L^\infty$ Mihlin-H\"ormander condition of integral order $s$ is essentially equivalent to the pointwise conditions
\begin{equation}\label{eq:mhpointwise}
\sup_{\lambda \neq 0} |\lambda|_\epsilon^{\|\alpha\|_\epsilon} \, |\partial^\alpha m(\lambda)| < \infty
\end{equation}
for $\alpha \in \N^n$ with $|\alpha| = \alpha_1 + \dots + \alpha_n \leq s$, where $|\cdot|_\epsilon$ is a $\epsilon_t$-homogeneous norm on $\R^n$ and $\|\alpha\|_\epsilon = w_1 \alpha_1 + \dots + w_n \alpha_n$.

In particular cases, the previous theorem can be improved by lowering the \emph{regularity threshold}, i.e., the right-hand side of \eqref{eq:mhthreshold}. For instance, by extending a technique due to Hebisch and Zienkiewicz \cite{hebisch_multiplier_1995}, we prove that, if $G$ is the direct product of Euclidean and M\'etivier groups, then the dimension at infinity $Q_G$ can be replaced in \eqref{eq:mhthreshold} by the topological dimension $\dim G$; in fact, in \S\ref{section:metivier} we propose a systematic approach for exploiting such technique, by introducing the notion of \emph{$h$-capacious groups}, for which $Q_G$ can be replaced in \eqref{eq:mhthreshold} by $Q_G - h$. The term $(n-1)/q$ in \eqref{eq:mhthreshold} can be lowered too, by determining the volume growth rate with respect to the Plancherel measure $\sigma$ of Euclidean balls with small radius: namely, if $\sigma(B(\lambda,r)) \leq Cr^d$ for $|\lambda| = 1$ and $r \leq 1$, then $n-1$ can be replaced in \eqref{eq:mhthreshold} by $n-d$.

Finally, a sort of product theory can be developed, by considering several homogeneous Lie groups $G_l$, each of which endowed with a homogeneous weighted subcoercive system $L_{l,1},\dots,L_{l,n_l}$, for $l=1,\dots,\ell$. Let $(\epsilon_{l,t})_{t>0}$ be the dilations on $\R^{n_l}$ associated with the system $L_{l,1},\dots,L_{l,n_l}$, and define the multi-parameter dilations
\[\gimel_{\vec{t}} = \epsilon_{1,t_1} \times \dots \times \epsilon_{\ell,t_\ell}\]
on $\R^{\vec{n}} = \R^{n_1} \times \dots \times \R^{n_\ell}$; set moreover $X_{\vec{n}} = \{ \lambda \in \R^{\vec{n}} \tc |\lambda_1| \cdots |\lambda_\ell| = 0\}$. If $G$ is a connected Lie group, $\upsilon_l : G_l \to G$ are Lie group homomorphisms, and $L_{l,j}^\flat = \upsilon_l'(L_{l,j})$, then we have a system
\begin{equation}\label{eq:bigsystem}
L_{1,1}^\flat,\dots,L_{1,n_1}^\flat,\dots,L_{\ell,1}^\flat,\dots,L_{\ell,n_\ell}^\flat
\end{equation}
of essentially self-adjoint, left-invariant differential operators on $G$. Under the sole hypothesis of (strong) commutativity of the operators \eqref{eq:bigsystem} on $L^2(G)$, we prove a multi-variate analogue of the previous theorem: the operator $m(L^\flat)$ is bounded on $L^p(G)$ for $1 < p < \infty$ whenever the multiplier $m : \R^{\vec{n}} \to \C$ vanishes on $X_{\vec{n}}$ and satisfies an $L^q$ \emph{Marcinkiewicz condition} of order $\vec s = (s_1,\dots,s_\ell)$, i.e.,
\begin{equation}\label{eq:mkcondition}
\|m\|_{M_\gimel S^{\vec{s}}_q W} \defeq \sup_{t_1,\dots,t_\ell > 0} \| (m \circ \gimel_{\vec t}) \,\eta_1 \otimes \dots \otimes \eta_\ell \|_{S^{\vec{s}}_q W} < \infty,
\end{equation}
with $q \in [2,\infty]$ and
\begin{equation}\label{eq:mkthreshold}
s_l > \frac{Q_{G_l}}{2} + \frac{n_l-1}{q} \qquad\text{for $l=1,\dots,\ell$,}
\end{equation}
where $S^{\vec{s}}_q W(\R^{\vec{n}})$ is the $L^q$ Sobolev space with dominating mixed smoothness of order $\vec{s}$ (see, e.g., \cite{schmeisser_topics_1987,schmeisser_recent_2007}) and the $\eta_l : \R^{n_l} \to \R$ are cut-off functions as before. This result can also be improved in particular cases: in fact, each of the components \eqref{eq:mkthreshold} of the regularity threshold can be independently lowered, by the same techniques and amounts as for the threshold \eqref{eq:mhthreshold} in the previous result. The hypothesis $m|_{X_{\vec{n}}} = 0$, related to the possibility that the spectral measure of $X_{\vec{n}}$ is not null, can be relaxed too, by applying iteratively our Marcinkiewicz-type result to subsystems of \eqref{eq:bigsystem}.

Both our theorems can be applied to the direct-product setting of \cite{sikora_multivariable_2009} (in the case of homogeneous groups), and also to the systems of operators considered in the above-mentioned works on H-type groups; in fact, the results of \cite{mller_marcinkiewicz_1996,veneruso_marcinkiewicz_2000} are sharper than ours, since they require a weaker condition on the multiplier. On the other hand, our theorems have a much wider range of applicability, with respect both to the groups and to the systems of differential operators under consideration. In particular, the environment group $G$ in the second theorem need not be homogeneous. As a corollary, we obtain $L^p$ multiplier theorems for distinguished sublaplacians on some non-nilpotent solvable Lie groups $G$ of polynomial growth (such as the plane motion group, the oscillator groups, the diamond groups) with regularity threshold $(\dim G)/2$; to our knowledge, this threshold had been previously reached only for some homogeneous groups (i.e., Heisenberg and related groups \cite{hebisch_multiplier_1993,mller_spectral_1994}) and for the compact group $SU_2$ \cite{cowling_spectral_2001}.

\subsection*{Notation and preliminary remarks}
For a topological space $X$, we denote by $C(X)$ the space of continuous (complex-valued) functions on $X$, whereas $C_0(X)$ is the subspace of continuous functions vanishing at infinity. If $X$ is a smooth manifold, then $\D(X)$ is the space of compactly supported smooth functions on $X$.

If $G$ is a Lie group, $f$ is a function on $G$ and $x,y \in G$, then we set
\[\LA_x f(y) = f(x^{-1}y) , \qquad \RA_x f(y) = f(y x).\]
$\RA : x \mapsto \RA_x$ is the (right) regular representation of $G$. For a fixed right Haar measure $\mu$ on $G$, $\RA_x$ is an isometry of $L^p(G)$ for $1 \leq p \leq \infty$. With respect to such measure, convolution and involution take the form
\[f * g(x) = \int_G f(xy^{-1}) g(y) \,dy, \qquad f^*(x) = \Delta(x) \overline{f(x^{-1})}\]
(where $\Delta$ is the modular function) and we set, for every representation $\pi$,
\[\pi(f) = \int_G f(x) \, \pi(x^{-1}) \,dx\]
(differently from the common usage), so that in particular
\[\RA(g)f = f * g, \qquad \pi(f * g) = \pi(g) \pi(f), \qquad \pi(Df) = d\pi(D) \pi(f)\]
for every $D \in \Diff(G)$. We denote by $D^+$ the formal adjoint of a smooth differential operator $D$ on $G$ (with respect to the measure $\mu$).

The above conditions \eqref{eq:mhcondition}, \eqref{eq:mkcondition} on the multiplier $m$ have been expressed in terms of Sobolev norms. In fact, there are several scales of spaces which can be used to express a differentiability condition of fractional order and with an $L^q$ flavour; since the inequalities \eqref{eq:mhthreshold}, \eqref{eq:mkthreshold} involving the order are strict, it does not really matter which of the various scales is used. In the following, we will use the scale of Besov spaces $B_{q,r}^s(\R^n)$ and their dominating-mixed-smoothness variants $S^{\vec{s}}_{q,r}B(\R^{\vec{n}})$, which are particularly convenient because of their embedding and interpolation properties (see, e.g., \cite{bergh_interpolation_1976,triebel_interpolation_1978,triebel_theory_1983,schmeisser_topics_1987,runst_sobolev_1996,schmeisser_spaces_2004,schmeisser_recent_2007,sickel_tensor_2009} for a reference); accordingly, we will use the quantities $\|\cdot\|_{M_\epsilon B_{q,q}^s}$, $\|\cdot\|_{M_\gimel S_{q,q}^{\vec{s}}B}$ in place of the ones in \eqref{eq:mhcondition}, \eqref{eq:mkcondition}.

\section{Weighted estimates}\label{section:weightedestimates}

Let $G$ be a Lie group of polynomial growth of degree $Q_G$, and set
\[|x|_G = d_G(x,e), \qquad \langle x \rangle_G = 1 + |x|_G,\]
where $d_G$ is a left-invariant connected distance on $G$ \cite[\S III.4]{varopoulos_analysis_1992} and $e \in G$ is the identity element. Let $L_1,\dots,L_n$ be a weighted subcoercive system on $G$, with associated Plancherel measure $\sigma$, and let $\mathcal{O}$ be the subalgebra of $\Diff(G)$ generated by them. The aim of this section is to obtain inequalities of the form
\[\| \langle \cdot \rangle_G^\alpha \, \Kern_L m \|_{L^p(G)} \leq C_{K,\alpha,\beta} \|m\|_{B_{q,q}^\beta(\R^n)}\]
for multipliers $m$ with support contained in a fixed compact $K \subseteq \R^n$, and for suitable $p,q,\alpha,\beta$. In order to do this, we will subsequently perform two ``changes of variables'' on the spectral side: the former corresponds to choosing a system of generators of $\mathcal{O}$ made of positive weighted subcoercive operators, while the latter is the multi-variate analogue of an exponential change of variables which has often been used in the literature (see, e.g., \cite[\S 6.B]{folland_hardy_1982}) and which allows, together with a Fourier-series decomposition, heat kernel estimates, and H\"older's inequality, to obtain the required estimates. Properties of the Plancherel measure $\sigma$ and interpolation will then be used (as in \cite{mauceri_vectorvalued_1990} and subsequent works) to improve the obtained inequalities.

As in \cite[\S3.2]{martini_spectral}, we can find a polynomial $p_* \geq 0$ on $\R^n$ such that, if
\[p_0(\lambda) = p_*(\lambda) + \sum_j \lambda_j^2+ 1, \qquad p_\nu(\lambda) = p_0(\lambda) + \lambda_\nu \quad\text{for $\nu=1,\dots,n$,}\]
then $p_*(L),p_0(L),p_1(L),\dots,p_n(L)$ are all positive and weighted subcoercive, and moreover $p_0(L),\dots,p_n(L)$ generate $\mathcal{O}$. Let $p : \R^n \to \R^{1+n}$ be the map whose components are the polynomials $p_0,\dots,p_n$. For $l \in \Z^{1+n}$, set
\[E_l(\lambda) = e^{i l \cdot e^{-p(\lambda)}} - 1 = e^{i (l_0 e^{-p_0(\lambda)} + l_1 e^{-p_1(\lambda)} + \dots + l_n e^{-p_n(\lambda)})} - 1.\]
Then $E_l \in C_0(\R^n)$, and in fact
\[E_l = \sum_{0 \neq k \in \N^{1+n}} \frac{(il_0)^{k_0} \cdots (il_n)^{k_n}}{k_0! \cdots k_n!} e^{-k_0 p_0} \cdots e^{-k_n p_n},\]
with uniform convergence on $\R^n$. This means that, if $h_{\nu,t}$ is the heat kernel of $p_\nu(L)$ for $\nu =0,\dots,n$ (with $h_{\nu,0}$ denoting the Dirac delta at the identity of $G$), then
\begin{equation}\label{eq:convolutionseries}
\breve E_l = \sum_{0 \neq k \in \N^{1+n}} \frac{(il_0)^{k_0} \cdots (il_n)^{k_n}}{k_0! \cdots k_n!} h_{0,k_0} * \cdots * h_{n,k_n}
\end{equation}
with convergence in the norm of convolution operators of $L^2(G)$.

\begin{lem}\label{lem:El2}
There exists $C > 0$ such that
\[\|\breve E_l\|_2 \leq C |l| \qquad\text{for all $l \in \Z^{1+n}$.}\]
\end{lem}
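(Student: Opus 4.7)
The plan is to use the Plancherel formula listed in the introduction,
\[\|\breve E_l\|_{L^2(G)} = \|E_l\|_{L^2(\R^n,\sigma)},\]
to reduce the claim to a pointwise estimate for $E_l$ that is square-integrable against $\sigma$. The elementary bound $|e^{it}-1| \leq |t|$ applied to the real number $t = l \cdot e^{-p(\lambda)} = l_0 e^{-p_0(\lambda)} + \dots + l_n e^{-p_n(\lambda)}$, together with the Cauchy--Schwarz inequality on $\R^{1+n}$, gives
\[|E_l(\lambda)|^2 \leq |l|^2 \sum_{\nu=0}^n e^{-2p_\nu(\lambda)}\]
for every $\lambda$ in the support of $\sigma$.

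It then remains to integrate against $\sigma$. For each $\nu$, the operator $p_\nu(L)$ is positive and weighted subcoercive, hence $p_\nu \geq 0$ on $\supp\sigma$ and its heat kernel $h_{\nu,1}$ lies in $L^2(G)$ (Gaussian-type estimates on the polynomial growth group $G$). Applying Plancherel again in the reverse direction,
\[\int_{\R^n} e^{-2p_\nu(\lambda)} \, d\sigma(\lambda) = \|e^{-p_\nu}\|_{L^2(\R^n,\sigma)}^2 = \|h_{\nu,1}\|_{L^2(G)}^2 < \infty.\]
Summing over $\nu=0,\dots,n$ yields
\[\|E_l\|_{L^2(\R^n,\sigma)}^2 \leq |l|^2 \sum_{\nu=0}^n \|h_{\nu,1}\|_{L^2(G)}^2,\]
so setting $C = \bigl(\sum_\nu \|h_{\nu,1}\|_2^2\bigr)^{1/2}$ and invoking Plancherel one last time proves the estimate.

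There is no real obstacle here beyond bookkeeping: the two non-trivial inputs (the Plancherel formula with measure $\sigma$ and the $L^2$-membership of the heat kernels $h_{\nu,1}$) are already established facts about weighted subcoercive systems on groups of polynomial growth, and the rest is the one-line inequality $|e^{it}-1|\leq|t|$ combined with Cauchy--Schwarz. The only mild subtlety is verifying that the exponentials $e^{-p_\nu}$ are indeed the kernels of $e^{-p_\nu(L)}$ at time $1$ in the $L^2(\sigma)$ picture, which is guaranteed by the joint functional calculus provided by the spectral resolution of $L_1,\dots,L_n$.
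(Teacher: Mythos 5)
Your argument is correct, but it is packaged differently from the paper's. You and the paper both start from the elementary bound $|E_l(\lambda)| \leq |l \cdot e^{-p(\lambda)}|$ and both ultimately rest on the fact that heat kernels of the positive weighted subcoercive operators $p_\nu(L)$ (resp.\ $p_*(L)$) lie in $L^2(G)$. The difference is the reduction mechanism: you pass to the spectral side via the Plancherel isometry, estimate $\|E_l\|_{L^2(\sigma)}$ by Cauchy--Schwarz, and then use Plancherel again to identify $\int e^{-2p_\nu}\,d\sigma$ with $\|h_{\nu,1}\|_2^2$; the paper never touches the Plancherel measure in this lemma, but instead writes $E_l = f\, e^{-p_*}$ with $f = e^{p_*}E_l$, notes $\|f\|_\infty \leq (1+n)|l|$ because $e^{-p_\nu} \leq e^{-p_*}$, and concludes from $\|\breve E_l\|_2 = \|f(L)\,\Kern_L(e^{-p_*})\|_2 \leq \|f\|_\infty\,\|\Kern_L(e^{-p_*})\|_2$, i.e.\ only the trivial $L^2\to L^2$ bound of the functional calculus applied to an $L^2$ heat kernel. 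Your route requires the Plancherel identity for multipliers such as $E_l$ and $e^{-p_\nu}$, which are bounded but not compactly supported; this is indeed available from the full isometry statement in the reference on weighted subcoercive systems, but it is slightly more than the summary quoted in the introduction (bounded and compactly supported $m$), so it deserves an explicit word. What each buys: your version is a clean, symmetric computation entirely on the spectral side; the paper's version uses fewer prerequisites (no Plancherel measure at all) and produces the same linear-in-$|l|$ bound with the single constant $\|\Kern_L(e^{-p_*})\|_2$.
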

\begin{proof}
We have
\[|E_l(\lambda)| \leq |l \cdot e^{-p(\lambda)}| \leq \sum_{\nu = 0}^n |l_j| e^{-p_j(\lambda)} \leq (1+n) |l| e^{-p_*(\lambda)},\]
so that in particular, if $f = e^{p_*} E_l$, then
\[\|\breve E_l\|_2 = \|f(L) \Kern_L (e^{-p_*}) \|_2 \leq \|f\|_\infty \|\Kern_L(e^{-p_*})\|_2 \leq (1+n) \|\Kern_L(e^{-p_*})\|_2 |l|,\]
which is the conclusion.
\end{proof}

\begin{lem}\label{lem:El2exp}
There exist $c,\omega > 0$ such that
\[\|\breve E_l \|_{L^2(G, e^{2|x|_G}\,dx)} \leq c e^{\omega |l|} \qquad\text{for all $l \in \Z^{1+n}$.}\]
\end{lem}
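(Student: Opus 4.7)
The plan is to exploit the convolution series \eqref{eq:convolutionseries} for $\breve E_l$ together with Gaussian-type heat-kernel estimates for the positive weighted subcoercive operators $p_0(L),\dots,p_n(L)$. The key algebraic fact is that the left-invariant distance $d_G$ is subadditive, $|xy|_G \leq |x|_G + |y|_G$, so $e^{|x|_G}$ is submultiplicative under convolution: for measurable $f,g$,
\[e^{|x|_G} |(f*g)(x)| \leq \bigl((e^{|\cdot|_G}|f|) * (e^{|\cdot|_G}|g|)\bigr)(x).\]
Combined with Young's inequality, this yields, for every $k \in \N^{1+n}$,
\[\|e^{|\cdot|_G}(h_{0,k_0} * \cdots * h_{n,k_n})\|_2 \leq \|e^{|\cdot|_G} h_{0,k_0}\|_2 \prod_{\nu=1}^n \|e^{|\cdot|_G} h_{\nu,k_\nu}\|_1,\]
so that the problem reduces to estimating weighted norms of the individual heat kernels.

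The next step is to show that, for $p \in \{1,2\}$ and all $t \geq 0$,
\[\|e^{|\cdot|_G} h_{\nu,t}\|_p \leq \tilde{C}_\nu\, e^{\tilde{\omega}_\nu t}.\]
Since each $p_\nu(L)$ is a positive weighted subcoercive operator on a group of polynomial growth, the heat kernel $h_{\nu,t}$ obeys Gaussian-type pointwise bounds (in the sense of \cite{ter_elst_weighted_1998}) of the form
\[|h_{\nu,t}(x)| \leq C_\nu\, t^{-D(t)/w_\nu} \exp\!\bigl(-c_\nu (|x|_G^{w_\nu}/t)^{1/(w_\nu - 1)}\bigr),\]
where $w_\nu$ is the weighted order of $p_\nu(L)$. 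The crucial feature is that the exponent $w_\nu/(w_\nu - 1)$ of $|x|_G$ is strictly greater than $1$. Integrating $e^{|\cdot|_G}$ against such a Gaussian factor, using polynomial volume growth, and maximising in $|x|_G$, one finds that the maximum of $|x|_G - c_\nu(|x|_G^{w_\nu}/t)^{1/(w_\nu-1)}$ is of order $t$, which produces the desired supplementary factor $e^{\tilde{\omega}_\nu t}$.

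Substituting these bounds termwise into \eqref{eq:convolutionseries}, taking the weighted $L^2$ norm inside the (absolutely convergent) sum, and factoring over $\nu$, one obtains
\[\|\breve E_l\|_{L^2(G, e^{2|x|_G}\,dx)} \leq C \sum_{0 \neq k \in \N^{1+n}} \prod_{\nu = 0}^n \frac{(|l_\nu| e^{\tilde{\omega}_\nu})^{k_\nu}}{k_\nu!} \leq C \prod_{\nu=0}^n \exp\!\bigl(|l_\nu| e^{\tilde{\omega}_\nu}\bigr) \leq c\, e^{\omega |l|}\]
with $\omega = \max_\nu e^{\tilde{\omega}_\nu}$, which is the claim. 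The principal technical obstacle is the weighted heat-kernel estimate with only exponential growth in $t$; everything else is essentially a book-keeping exercise built on the submultiplicativity of $e^{|x|_G}$ and Young's inequality.
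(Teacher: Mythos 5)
Your argument is correct and follows essentially the same route as the paper: estimate the series \eqref{eq:convolutionseries} termwise, using submultiplicativity of $e^{|\cdot|_G}$ and Young's inequality to reduce everything to a single-kernel bound $\|e^{|\cdot|_G}\,h_{\nu,t}\|_q \leq c\,e^{\omega t}$, and then sum the resulting exponential series in $|l|$. The only difference is that you rederive that single-kernel bound directly from pointwise Gaussian estimates (where you should note that these are naturally stated for a modulus adapted to the weighted structure, so one invokes the equivalence at infinity of connected left-invariant distances, and that only integer times $t \geq 1$ are needed), whereas the paper obtains it by interpolating inequalities (e) and (f) of \cite[Theorem~2.3]{martini_spectral}.
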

\begin{proof}
Since all the connected left-invariant distances on $G$ are equivalent in the large \cite[Proposition III.4.2]{varopoulos_analysis_1992}, by interpolating the inequalities (e) and (f) of \cite[Theorem~2.3]{martini_spectral}, we have that there exist $c \geq 1$ and $\omega > 0$ such that
\[\|h_{\nu,t} \,e^{|\cdot|_G}\|_{q} \leq c e^{\omega t} \qquad\text{for $t \geq 1$, $\nu = 0,\dots,n$ and $q \in [1,\infty]$.}\]
By Young's inequality and submultiplicativity of $e^{|\cdot|_G}$, we then get
\[\|(h_{0,k_0} * \cdots * h_{n,k_n}) \,e^{|\cdot|_G}\|_{q} \leq c^{1+n} e^{\omega(k_0 + \dots + k_n)}\]
for $k \in \N^{1+n} \setminus \{0\}$ and $q \in [1,\infty]$. This means in particular that the series in \eqref{eq:convolutionseries} converges absolutely in $L^2(G,e^{2|x|_G}\,dx)$, with
\[\sum_{0 \neq k \in \N^{1+n}} \left\| \frac{(il_0)^{k_0} \cdots (il_n)^{k_n}}{k_0! \cdots k_n!} h_{0,k_0} * \cdots * h_{n,k_n} \right\|_{L^2(G,e^{2|x|_G}\,dx)} \leq c^{1+n} e^{e^{\omega} |l|},\]
and we are done.
\end{proof}

\begin{lem}\label{lem:El2poly}
For all $\alpha \geq 0$, we have
\[\|\breve E_l\|_{L^2(G,\langle x \rangle_G^{2\alpha} \,dx)} \leq C_\alpha |l|^{\alpha+1} \qquad\text{for $l \in \Z^{1+n}$.}\]
\end{lem}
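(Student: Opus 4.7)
The plan is to combine the two previous lemmas via a spatial cut-off whose radius grows linearly in $|l|$. The unweighted $L^2$ bound of Lemma~\ref{lem:El2} is linear in $|l|$, while the exponentially weighted $L^2$ bound of Lemma~\ref{lem:El2exp} is exponential in $|l|$; by restricting to a ball of radius $R$ proportional to $|l|$ one can convert the exponential factor into a polynomial one and obtain the required estimate.

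Concretely, for $R > 0$ to be chosen I split
\[ \int_G |\breve E_l(x)|^2 \langle x\rangle_G^{2\alpha}\,dx = \int_{|x|_G \leq R} |\breve E_l|^2 \langle x\rangle_G^{2\alpha}\,dx + \int_{|x|_G > R} |\breve E_l|^2 \langle x\rangle_G^{2\alpha}\,dx. \]
On the ball the weight is bounded by $(1+R)^{2\alpha}$, so Lemma~\ref{lem:El2} controls the first piece by $C^2 (1+R)^{2\alpha}|l|^2$. On the complement, the one-variable calculus fact that $t\mapsto (1+t)^{2\alpha}e^{-2t}$ is non-increasing on $[\alpha,\infty)$ gives the elementary majorization $\langle x\rangle_G^{2\alpha} \leq C_\alpha (1+R)^{2\alpha} e^{-2R}\, e^{2|x|_G}$ for $|x|_G \geq R$, and Lemma~\ref{lem:El2exp} then bounds the second piece by $C_\alpha (1+R)^{2\alpha} e^{-2R} c^2 e^{2\omega|l|}$.

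Choosing $R = \omega|l| + \alpha$, the factor $e^{-2R+2\omega|l|}$ collapses to $e^{-2\alpha}$, and the whole integral is dominated by $C'_\alpha(1+|l|)^{2\alpha}(|l|^2+1) \leq C''_\alpha (1+|l|)^{2(\alpha+1)}$. Taking square roots yields the desired bound for $|l|\geq 1$; the case $l = 0$ is trivial since $E_0 \equiv 0$. I do not foresee any serious obstacle here: once the right scaling $R \sim |l|$ is identified, the rest is bookkeeping, and the only step requiring a moment's thought is the elementary majorization of the polynomial weight by the exponential one.
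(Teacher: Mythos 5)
Your proof is correct and follows essentially the same route as the paper: split the weighted integral at a radius comparable to $\omega|l|$, bound the inner part via Lemma~\ref{lem:El2} and the outer part by trading the polynomial weight for the exponential one via the decay of $(1+r)^{2\alpha}e^{-2r}$ and Lemma~\ref{lem:El2exp}. The only (immaterial) difference is that you take the cut at $R=\omega|l|+\alpha$ and handle small $|l|$ directly (noting $E_0\equiv 0$), whereas the paper cuts at $\omega|l|$ and disposes of small $|l|$ by invoking Lemma~\ref{lem:El2exp}.
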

\begin{proof}
By Lemma~\ref{lem:El2exp}, it is sufficient to check the estimate for $|l|$ large, but then
\begin{multline*}
\int_{G} |\breve E_l(x)|^2 \langle x \rangle_G^{2\alpha} \,dx \leq \int_{|x|_G \leq \omega|l|} + \int_{|x|_G > \omega|l|} \\
\leq (1+\omega|l|)^{2\alpha} \|\breve E_l\|_2^2 + \sup_{r > \omega|l|} {\textstyle \frac{(1+r)^{2\alpha}}{e^{2r}}} \|\breve E_l\|_{L^2(G,e^{2|x|_G}\,dx)}^2 
\leq C_\alpha |l|^{2(\alpha+1)}
\end{multline*}
by Lemmata~\ref{lem:El2} and \ref{lem:El2exp}.
\end{proof}

\begin{lem}\label{lem:fourierdecomposition}
Let $K \subseteq \R^n$ be compact. For every $f \in \D(\R^n)$ supported in $K$, there exists $g \in \D(\T^{1+n})$, depending linearly on $f$, such that
\begin{gather}
\label{eq:fdec1} f(\lambda) = g\left(e^{i e^{-p(\lambda)}}\right) = g\left(e^{i e^{-p_0(\lambda)}}, \dots, e^{i e^{-p_n(\lambda)}}\right),\\
\label{eq:fdec2} g(1,\dots,1) = 0,\\
\label{eq:fdec3} \|g\|_{B_{2,2}^s(\T^{1+n})} \leq C_{K,s} \|f\|_{B_{2,2}^s(\R^n)} \qquad\text{for all $s \geq 0$.}
\end{gather}
In particular, if $g(e^{it}) = \sum_{l \in \Z^{1+n}} \hat{g}(l) e^{i l \cdot t}$ is the Fourier-series development of $g$, then we have
\begin{equation}\label{eq:fdec4}
f = \sum_{0 \neq l \in \Z^{1+n}} \hat g(l) E_l,
\end{equation}
with uniform convergence on $\R^n$.
\end{lem}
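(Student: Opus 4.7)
The plan is to construct $g$ as the periodization on $\T^{1+n} = \R^{1+n}/(2\pi\Z^{1+n})$ of a compactly supported smooth function on $\R^{1+n}$ obtained from $f$ by a change of coordinates. Its feasibility rests on two structural observations. First, each $p_\nu$ admits a uniform lower bound on $\R^n$: one has $p_0 \geq 1$, while for $\nu \geq 1$
\[
p_\nu(\lambda) = p_*(\lambda) + \sum_j \lambda_j^2 + \lambda_\nu + 1 \geq \lambda_\nu^2 + \lambda_\nu + 1 \geq \tfrac{3}{4},
\]
so that the image of $\lambda \mapsto e^{-p(\lambda)}$ is contained in the open cube $(0,1)^{1+n}$, which fits strictly inside a fundamental domain $(-\pi,\pi)^{1+n}$ of $\T^{1+n}$. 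Second, from $p_\nu - p_0 = \lambda_\nu$ one reads off a smooth right-inverse $\psi(u) = -(\log(u_1/u_0),\dots,\log(u_n/u_0))$, defined on $(0,\infty)^{1+n}$ and satisfying $\psi(e^{-p(\lambda)}) = \lambda$.

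Given $f \in \D(\R^n)$ supported in $K$, I set $U_K = e^{-p(K)}$ (a compact subset of $(0,1)^{1+n}$), pick a cutoff $\chi \in \D((0,1)^{1+n})$ with $\chi \equiv 1$ on $U_K$, and let $F(u) = \chi(u) f(\psi(u))$, extended by zero. Then $F \in \D(\R^{1+n})$ is supported in $(0,1)^{1+n}$, and $F(e^{-p(\lambda)}) = f(\lambda)$ for every $\lambda \in \R^n$ (on $K$ because $\chi \equiv 1$ there, off $K$ because $f = 0$). Setting $g(e^{iu}) \defeq \sum_{k \in \Z^{1+n}} F(u + 2\pi k)$ defines a smooth function on $\T^{1+n}$. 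Since the $2\pi\Z^{1+n}$-translates of $(0,1)^{1+n}$ are pairwise disjoint, only the $k=0$ term survives when $u \in (0,1)^{1+n}$; hence $g(e^{ie^{-p(\lambda)}}) = F(e^{-p(\lambda)}) = f(\lambda)$, which is \eqref{eq:fdec1}, and $g(1,\dots,1) = \sum_k F(2\pi k) = 0$ because $0 \notin \supp F$, which is \eqref{eq:fdec2}. The expansion \eqref{eq:fdec4} then follows by inserting the (uniformly convergent) Fourier series of $g$ into \eqref{eq:fdec1} and using \eqref{eq:fdec2} to absorb the $l=0$ term.

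The remaining, and principal, task is the Sobolev bound \eqref{eq:fdec3}. Using the identification $B^s_{2,2}=H^s$, and the fact that $F$ is supported strictly inside a fundamental domain, standard estimates on periodizations reduce the matter to proving $\|F\|_{H^s(\R^{1+n})} \leq C_{K,s} \|f\|_{H^s(\R^n)}$. For this I change variables via the diffeomorphism $(\tau,\lambda) \mapsto (\tau, \tau e^{-\lambda_1},\dots,\tau e^{-\lambda_n})$ from $(0,\infty) \times \R^n$ onto $(0,\infty)^{1+n}$, which transforms $F$ into $\tilde F(\tau,\lambda) = \tilde\chi(\tau,\lambda) f(\lambda)$ with $\tilde\chi \in \D((0,\infty) \times \R^n)$. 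Choosing $\eta \in \D(\R)$ equal to $1$ on the $\tau$-projection of $\supp\tilde\chi$, one factors $\tilde F = \tilde\chi \cdot (\eta \otimes f)$, and the tensor bound
\[
\|\eta \otimes f\|_{H^s(\R^{1+n})} \leq \|\eta\|_{H^s(\R)} \|f\|_{H^s(\R^n)},
\]
which is immediate from $(1+\xi_0^2+|\xi|^2)^s \leq (1+\xi_0^2)^s(1+|\xi|^2)^s$ on the Fourier side, combined with the boundedness of multiplication by the smooth compactly supported $\tilde\chi$ on $H^s(\R^{1+n})$, closes the estimate. I expect this last step to be the main obstacle: the composition $f \circ \psi$ is with a submersion rather than a diffeomorphism, and what makes the Besov estimate tractable is precisely the trick of promoting $u_0$ to an auxiliary coordinate $\tau$, so that in the new coordinates everything reduces to tensor products and pointwise multipliers in the Sobolev scale.
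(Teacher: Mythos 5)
Your proof is correct, and it follows the same overall plan as the paper's — use the identity $p_\nu-p_0=\lambda_\nu$ to realize $f$ as a smooth function $g$ on $\T^{1+n}$ vanishing at $(1,\dots,1)$, then expand in Fourier series and absorb the $l=0$ coefficient via $\sum_l\hat g(l)=0$ — but the two constructions place the cutoff in different coordinates, and this changes how the key estimate \eqref{eq:fdec3} is obtained. The paper works at the level of $y=p(\lambda)$: it sets $\tilde f(y)=f(y_1-y_0,\dots,y_n-y_0)\,\psi_K(y)$, so the right inverse of $p$ being used is affine of maximal rank and the bound $\|\tilde f\|_{B^s_{2,2}(\R^{1+n})}\leq C_{K,s}\|f\|_{B^s_{2,2}(\R^n)}$ is immediate; it then transports $\tilde f$ to the torus through the diffeomorphism $y\mapsto e^{ie^{-y}}$, whose image avoids $(1,\dots,1)$, and extends by zero. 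You instead cut off in the coordinates $u=e^{-p(\lambda)}$, where the right inverse $\psi(u)=-(\log(u_1/u_0),\dots,\log(u_n/u_0))$ is no longer linear, and you recover the Sobolev bound by the auxiliary diffeomorphism $(\tau,\lambda)\mapsto(\tau,\tau e^{-\lambda_1},\dots,\tau e^{-\lambda_n})$ followed by the tensor-product and pointwise-multiplier estimates in $H^s=B^s_{2,2}$; your periodization of $F$ (supported in $(0,1)^{1+n}\subset(-\pi,\pi)^{1+n}$) is just another phrasing of the paper's extension-by-zero through the diffeomorphism onto an open subset of $\T^{1+n}$. In effect, your $(\tau,\lambda)$-trick is a hands-on proof of the ``maximal rank change of variables'' estimate that the paper gets for free by applying the affine inverse before exponentiating: both are legitimate, the paper's ordering is shorter, while yours makes explicit why composition with the submersion $\psi$ costs nothing after the cutoff. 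The remaining steps (strict positivity of the $p_\nu$, so that $e^{-p(K)}$ is a compact subset of $(0,1)^{1+n}$ and the cutoff $\chi$ depends only on $K$; linearity of $f\mapsto g$; the derivation of \eqref{eq:fdec1}, \eqref{eq:fdec2} and the uniformly convergent expansion \eqref{eq:fdec4}) match the paper's argument.
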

\begin{proof}
Since $K \subseteq \R^n$ is compact and the polynomials $p_0,\dots,p_n$ are strictly positive, $p(K)$ is a compact subset of $\Omega = \left]0,+\infty\right[^{1+n}$. Therefore we can choose $\psi_K \in \D(\Omega)$ such that $\psi_K|_{p(K)} \equiv 1$. If we put
\[\tilde f(y) = f(y_1 - y_0,\dots,y_n - y_0) \psi_K(y) \qquad\text{for $y \in \R^{1+n}$,}\]
we then have that $\tilde f \in \D(\Omega)$, $f = \tilde f \circ p$ and
\[\|\tilde f\|_{B_{2,2}^s(\R^{1+n})} \leq C_{K,s} \|f\|_{B_{2,2}^s(\R^n)} \qquad\text{for all $s \geq 0$,}\]
since the change of variables has maximal rank.

Notice now that the map
\[\Phi : \Omega \ni y \mapsto e^{i e^{-y}} = (e^{i e^{-y_0}},\dots,e^{i e^{-y_n}}) \in \T^{1+n}\]
is a smooth diffeomorphism with its image, which is an open subset of $\T^{1+n}$ not containing $(1,\dots,1)$. The function $g = \tilde f \circ \Phi^{-1} \in \D(\Phi(\Omega))$ can be then extended by zero to a smooth function on $\T^{1+n}$, and we have clearly
\[\|g\|_{B_{2,2}^s(\T^{1+n})} \leq C_{K,s} \|\tilde f\|_{B_{2,2}^s(\R^{1+n})} \qquad\text{for all $s \geq 0$.}\]

The construction shows that $g$ depends linearly on $f$ and satisfies \eqref{eq:fdec1}-\eqref{eq:fdec3}. In particular, we have $\sum_{l \in \Z^{1+n}} \hat g(l) = 0$, so that the Fourier decomposition of $g$ can be rewritten as
\[g(e^{it}) = \sum_{0 \neq l \in \Z^{1+n}} \hat{g}(l) (e^{i l \cdot t} - 1)\]
(with uniform convergence since $g$ is smooth), which gives \eqref{eq:fdec4}.
\end{proof}

\begin{prp}\label{prp:sobolevl2estimates}
Let $K \subseteq \R^n$ be compact, $\alpha \geq 0$, $\beta > \alpha + (n+3)/2$. For all $f \in \D(\R^n)$ with $\supp f \subseteq K$, we have
\[\|\breve f\|_{L^2(G, \langle x \rangle_G^{2\alpha} \,dx)} \leq C_{K,\alpha,\beta} \|f\|_{B_{2,2}^\beta(\R^n)}.\]
\end{prp}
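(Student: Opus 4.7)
The machinery is now fully in place: Lemma~\ref{lem:fourierdecomposition} converts $f$ into a Fourier series on $\T^{1+n}$ with the $E_l$ as building blocks, and Lemma~\ref{lem:El2poly} controls each $\breve E_l$ in the weighted $L^2$ norm. The proof is therefore a one-step summation argument combined with a Cauchy--Schwarz trick to separate the coefficient side from the Sobolev side.

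First, given $f \in \D(\R^n)$ with $\supp f \subseteq K$, I would apply Lemma~\ref{lem:fourierdecomposition} to get $g \in \D(\T^{1+n})$ depending linearly on $f$ with $f = \sum_{0 \neq l \in \Z^{1+n}} \hat g(l)\, E_l$ uniformly on $\R^n$. Since the series \eqref{eq:convolutionseries} (which underlies Lemma~\ref{lem:El2poly}) converges in a strong enough sense, I can take kernels termwise and write
\[
\breve f = \sum_{0 \neq l \in \Z^{1+n}} \hat g(l)\, \breve E_l
\]
as a sum converging in $L^2(G,\langle x\rangle_G^{2\alpha}\,dx)$ --- this convergence is itself a consequence of the estimates about to be used.

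Second, by the triangle inequality in the weighted $L^2$ space and Lemma~\ref{lem:El2poly},
\[
\|\breve f\|_{L^2(G,\langle x\rangle_G^{2\alpha}\,dx)} \leq C_\alpha \sum_{0 \neq l \in \Z^{1+n}} |\hat g(l)|\, |l|^{\alpha+1}.
\]
Now apply Cauchy--Schwarz splitting with weight $(1+|l|)^{\beta}$:
\[
\sum_{0\neq l} |\hat g(l)|\, |l|^{\alpha+1} \leq \Big(\sum_{l} |\hat g(l)|^2 (1+|l|)^{2\beta}\Big)^{1/2} \Big(\sum_{0\neq l} (1+|l|)^{-2\beta}\, |l|^{2\alpha+2}\Big)^{1/2}.
\]
The second factor is a finite constant precisely when $2\beta - 2\alpha - 2 > n+1$, i.e.\ $\beta > \alpha + (n+3)/2$, which is the hypothesis. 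The first factor is comparable to $\|g\|_{B_{2,2}^\beta(\T^{1+n})}$, using the standard Fourier-side characterization of $B_{2,2}^\beta = H^\beta$ on the torus.

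Third, I would invoke \eqref{eq:fdec3} of Lemma~\ref{lem:fourierdecomposition} to bound $\|g\|_{B_{2,2}^\beta(\T^{1+n})} \leq C_{K,\beta}\|f\|_{B_{2,2}^\beta(\R^n)}$, which chains into the final estimate. There is no real obstacle here: the only small point to watch is justifying the termwise passage from $f = \sum \hat g(l) E_l$ to $\breve f = \sum \hat g(l) \breve E_l$ in the weighted space, but the Cauchy--Schwarz argument just given shows absolute convergence of the right-hand side in $L^2(G,\langle x\rangle_G^{2\alpha}\,dx)$ whenever $\beta > \alpha + (n+3)/2$, so the identity follows from the uniform (hence distributional) convergence in \eqref{eq:fdec4} and the continuity of $\Kern_L$ on appropriate function spaces established in \cite{martini_spectral}.
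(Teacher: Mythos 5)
Your proof is correct and is essentially the paper's own argument: decompose $f$ via Lemma~\ref{lem:fourierdecomposition}, pass to kernels termwise, bound each $\|\breve E_l\|_{L^2(G,\langle x\rangle_G^{2\alpha}dx)}$ by Lemma~\ref{lem:El2poly}, and apply Cauchy--Schwarz (the paper's ``H\"older'') against the weight $(1+|l|)^\beta$, where the condition $\beta>\alpha+(n+3)/2$ is exactly what makes the auxiliary sum over $\Z^{1+n}$ converge, finishing with \eqref{eq:fdec3}. No substantive differences.
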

\begin{proof}
Let $g \in \D(\T^{1+n})$ be given by Lemma~\ref{lem:fourierdecomposition}. Then 
\[\breve f = \sum_{0 \neq l \in \Z^{1+n}} \hat g(l) \breve E_l\]
in the norm of convolution operators on $L^2(G)$. However, the series in the right-hand side converges absolutely in $L^2(G,\langle x \rangle_G^{2\alpha} \,dx)$, since
\begin{multline*}
\sum_{0 \neq l \in \Z^{1+n}} |\hat g(l)| \, \|\breve E_l\|_{L^2(G,\langle x \rangle_G^{2\alpha}\,dx)} \leq C_\alpha \sum_{0 \neq l \in \Z^{1+n}} |\hat g(l)| \, |l|^{\alpha + 1} \\
\leq C_{\alpha,\beta} \|g\|_{B_{2,2}^\beta(\T^{1+n})} \leq C_{K,\alpha,\beta} \|f\|_{B_{2,2}^\beta(\R^n)}
\end{multline*}
by Lemma~\ref{lem:El2poly} and H\"older's inequality, and the conclusion follows.
\end{proof}

The previous proposition contains a ``preliminary version'' of the required inequalities, which we are now going to sharpen by interpolation with the Plancherel formula. In order to control the $L^2(\sigma)$ norm with a Besov norm, we will use a refined trace theorem due to Triebel.

Let $\tau$ be a (positive) regular Borel measure on $\R^n$, and let $0 \leq d \leq n$; we say that $\tau$ is \emph{locally $d$-bounded} on an open $\Omega \subseteq \R^n$ if, for every compact $K \subseteq \Omega$ and for $0 \leq \gamma < d$, there exist $C,\bar r > 0$ such that
\[\tau(B(\lambda,r)) \leq C r^\gamma \qquad\text{for $\lambda \in K$ and $r \leq \bar r$.}\]
Notice that every regular Borel measure $\tau$ is locally $0$-bounded on the whole $\R^n$; moreover, if $\tau$ is homogeneous with respect to some system of dilations $\epsilon_t$ on $\R^n$ (i.e., if $\tau(\epsilon_t(A)) = t^a \tau(A)$ for some $a \geq 0$ and every Borel $A \subseteq \R^n$), then $\tau$ is locally $1$-bounded on $\R^n \setminus \{0\}$.

\begin{lem}\label{lem:triebeltrace}
Let $\tau$ be a regular Borel measure which is locally $d$-bounded on an open $\Omega \subseteq \R^n$. If $s > (n-d)/2$ and $K \subseteq \Omega$ is compact, then
\[\|f\|_{L^2(\tau)} \leq C_{K,s} \|f\|_{B_{2,2}^s(\R^n)}\]
for every $f \in \D(\R^n)$ with $\supp f \subseteq K$.
\end{lem}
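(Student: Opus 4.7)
The plan is to prove the estimate by combining a Littlewood--Paley decomposition of $f$ with a Plancherel--Polya type inequality for band-limited functions. I would start by fixing a dyadic resolution of unity $1 = \sum_{j \geq 0} \hat\phi_j$ on $\R^n$, with $\hat\phi_j$ supported in $\{|\xi| \sim 2^j\}$ for $j \geq 1$ and $\hat\phi_0$ in $\{|\xi| \leq 2\}$, and set $f_j = \phi_j * f$, so that $\|f\|_{B_{2,2}^s(\R^n)}^2 \sim \sum_{j \geq 0} 2^{2js}\|f_j\|_{L^2(\R^n)}^2$. Since $s > (n-d)/2$, I would choose $\gamma$ with $n - 2s < \gamma < d$. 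Using $f(\lambda) = \sum_j f_j(\lambda)$ pointwise on $K$ (uniform convergence since $f$ is smooth), Minkowski in $L^2(K,d\tau)$ reduces everything to the local estimate
\[
\|f_j\|_{L^2(K,\,d\tau)}^2 \leq C\, 2^{j(n-\gamma)} \|f_j\|_{L^2(\R^n)}^2 \qquad (j \geq 0).
\]

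To establish the local estimate I would invoke the Plancherel--Polya bound
\[
|f_j(\lambda)|^2 \leq C_N\, 2^{jn} \int_{\R^n} \frac{|f_j(\mu)|^2}{(1 + 2^j|\lambda - \mu|)^N}\,d\mu,
\]
valid because $f_j$ is band-limited at scale $2^j$, integrate against $\chr_K\,d\tau(\lambda)$, and swap the order of integration. The matter then reduces to proving the uniform bound
\[
\Phi_j(\mu) \defeq \int_K \frac{d\tau(\lambda)}{(1 + 2^j|\lambda - \mu|)^N} \leq C\, 2^{-j\gamma} \qquad (\mu \in \R^n,\ j \text{ large}).
\]
This I would establish by decomposing $K$ along dyadic shells $K \cap (B(\mu, 2^{k-j}) \setminus B(\mu, 2^{k-j-1}))$: on each nonempty shell the denominator contributes a factor $\gtrsim 2^{kN}$, while the $\tau$-measure of the shell is $\leq C\, 2^{(k-j)\gamma}$ (pick $\lambda_0 \in K$ inside the shell and apply the local $d$-boundedness to $B(\lambda_0, 2^{k-j+1})$, as long as this radius is below the threshold $\bar r$). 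Summing over $k$ with $N > \gamma$ yields the required decay. The finitely many small values of $j$ for which the radius threshold fails are absorbed into the constant by the trivial bound $\|f_j\|_{L^2(K,\,d\tau)}^2 \leq \tau(K)\,\|f_j\|_\infty^2 \leq C\,\|f_j\|_{L^2}^2$ via Bernstein.

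The conclusion is then immediate by Cauchy--Schwarz on the dyadic sum:
\[
\|f\|_{L^2(\tau)} \leq \sum_j \|f_j\|_{L^2(K,\,d\tau)} \leq C \Bigl(\sum_j 2^{-2j(s - (n-\gamma)/2)}\Bigr)^{\!1/2} \Bigl(\sum_j 2^{2js}\|f_j\|_{L^2}^2\Bigr)^{\!1/2} \leq C_{K,s}\,\|f\|_{B_{2,2}^s(\R^n)},
\]
the geometric series converging because $s > (n-\gamma)/2$, i.e., $\gamma > n - 2s$, which is the way $\gamma$ was chosen.

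The main technical point is the uniform estimate on $\Phi_j$: although the local $d$-boundedness of $\tau$ concerns only balls centered in the compact $K$, the integration variable $\mu$ in the Plancherel--Polya bound ranges over all of $\R^n$, so one must lean on the decay of $(1 + 2^j|\cdot|)^{-N}$ both to control the growth of $\tau$ on larger balls and to extract a uniform $2^{-j\gamma}$ decay in $\mu$, including for $\mu$ far from $K$. Once this dyadic bookkeeping is set up, the rest of the argument is a straightforward Littlewood--Paley summation.
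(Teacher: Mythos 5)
Your argument is correct, but it follows a genuinely different route from the paper. The paper does not redo any Littlewood--Paley analysis: for $0<d\leq n$ it restricts $\tau$ to $K$, enlarges the constants so that the truncated measure $\tau_K$ satisfies a \emph{global} bound $\tau_K(B(\lambda,r))\leq C' r^{d-\varepsilon}$ for all $r>0$ and $\lambda\in\R^n$, and then quotes Triebel's trace theorem for $F^s_{2,2}=B^s_{2,2}$ with respect to such measures, letting $\varepsilon\to 0$ at the end (the case $d=0$ being dispatched by the embedding $B^s_{2,2}\subseteq L^\infty$ for $s>n/2$). You instead prove the relevant $p=q=2$ case of that trace theorem from scratch: dyadic decomposition $f=\sum_j f_j$, the Peetre/Plancherel--Polya pointwise bound for band-limited pieces (valid once $N>n$, obtained from $f_j=f_j*\psi_j$ and Cauchy--Schwarz), the uniform estimate $\Phi_j(\mu)\leq C2^{-j\gamma}$ via dyadic shells centered at $\mu$ (using local $\gamma$-boundedness with centers $\lambda_0\in K$ for the shells of radius $\leq\bar r$, and $\tau(K)<\infty$ together with the decay $(1+2^j\bar r)^{-N}$ for the larger shells and for $\mu$ far from $K$, with $N>\max\{n,\gamma\}$), and finally Cauchy--Schwarz in $j$, which converges precisely because $\gamma>n-2s$. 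The role your auxiliary exponent $\gamma\in\left]n-2s,d\right[$ plays is the exact analogue of the paper's $\varepsilon$-loss $d-\varepsilon$; your proof is self-contained and makes the mechanism visible, at the cost of length, while the paper's is shorter by outsourcing the analytic core to the literature.

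Two small points to tidy up. First, when $d=0$ your prescription ``choose $n-2s<\gamma<d$'' produces only negative values of $\gamma$; but in that case $s>n/2$ and you can simply take $\gamma=0$ in the shell estimate (only $\tau(K)<\infty$ is needed), or note that the case is trivial from $\|f\|_\infty\lesssim\|f\|_{B^s_{2,2}}$. Second, your ``trivial bound'' for the finitely many small $j$ should read $\|f_j\|_{L^2(K,d\tau)}^2\leq\tau(K)\|f_j\|_\infty^2\lesssim\tau(K)2^{jn}\|f_j\|_2^2$ by Bernstein--Nikolskii (the factor $2^{jn}$ is harmless there, but it is not absent). Neither point affects the validity of the argument.
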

\begin{proof}
If $d=0$, then $\|f\|_\infty \leq C_s \|f\|_{B_{2,2}^s(\R^n)}$ and the result is trivial. Suppose instead that $0 < d \leq n$, and let $K \subseteq \Omega$ be compact, $\varepsilon > 0$. Choose a compact neighborhood $K' \subseteq \Omega$ of $K$, and let $C,\bar r > 0$ such that
\[\tau(B(\lambda,r)) \leq C r^{d-\epsilon} \qquad\text{for $\lambda \in K'$ and $0 < r \leq \bar r$.}\]
Let moreover $\bar r' = \min\{\bar r, \mathop{\mathrm{dist}}(\Omega \setminus \mathring{K}', K)\}$, $C' = \max\{C, \tau(K)/(\bar r')^{d-\epsilon}\}$. The identity $\tau_K(E) = \tau(E \cap K)$ defines a positive regular Borel measure $\tau_K$ on $\R^n$, which coincides with $\tau$ on $K$, and with $\supp \tau_K \subseteq K$. Moreover
\[\tau_K(B(\lambda,r)) \leq C' r^{d-\varepsilon} \qquad\text{for every $r > 0$ and $\lambda \in \R^n$,}\]
by construction. Therefore, since the Besov space $B_{2,2}^s(\R^n)$ coincides with the Triebel-Lizorkin space $F_{2,2}^s(\R^n)$, by \cite[Corollary 9.8(ii)]{triebel_structure_2001} we have
\[\|f\|_{L^2(\tau)} = \|f\|_{L^2(\tau_K)} \leq C_{K,s} \|f\|_{B_{2,2}^s(\R^n)}\]
for $s > (n-d)/2 + \varepsilon/2$ and $f \in \D(\R^n)$ with $\supp f \subseteq K$. The conclusion follows from the arbitrariness of $\varepsilon>0$.
\end{proof}

\begin{theo}\label{thm:weightedestimates}
Suppose that, for some open $\Omega \subseteq \R^n$, the Plancherel measure $\sigma$ is locally $d$-bounded on $\Omega$. Let $K \subseteq \Omega$ be compact, $D \in \Diff(G)$, $p,q \in [1,\infty]$, $\alpha \geq 0$,
\[\beta > \alpha + Q_G \left(\frac{1}{\min\{2,p\}} - \frac{1}{2}\right) + \frac{n}{q} - \frac{d}{\max\{2,q\}}.\]
For all $m \in B^{\beta}_{q,q}(\R^n)$ with $\supp m \subseteq K$, we have
\[\|\langle \cdot \rangle_G^\alpha \, D \breve m\|_{L^p(G)} \leq C_{K,D,\alpha,\beta,p,q} \|m\|_{B_{q,q}^\beta(\R^n)}.\]
\end{theo}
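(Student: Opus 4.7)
The plan is to derive the claim from Proposition~\ref{prp:sobolevl2estimates} (a weighted $L^2$ estimate with a non-sharp Sobolev order) together with the Plancherel formula $\|\breve m\|_2=\|m\|_{L^2(\sigma)}$ paired with Lemma~\ref{lem:triebeltrace} (a sharp weight-free $L^2$ estimate), via complex interpolation, Hölder's inequality, and local Sobolev embedding on $G$.

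I would first extend Proposition~\ref{prp:sobolevl2estimates} to incorporate an arbitrary $D\in\Diff(G)$: since $D$ is left-invariant, $D(h_{0,k_0}*\cdots*h_{n,k_n}) = h_{0,k_0}*\cdots*D h_{n,k_n}$, and the weighted Gaussian bounds of \cite[Theorem~2.3]{martini_spectral} applied to $D h_{n,t}$ make the proofs of Lemmas~\ref{lem:El2}--\ref{lem:El2poly} and of the Proposition go through unchanged with $D\breve E_l$, $D\breve f$ in place of $\breve E_l$, $\breve f$. Separately, the standard subcoercive (hypoellipticity) inequality $\|Df\|_2\le C_D\|(I+p_0(L))^N f\|_2$, combined with Plancherel and Lemma~\ref{lem:triebeltrace}, yields $\|D\breve m\|_2\le C_{K,D,s_0}\|m\|_{B_{2,2}^{s_0}}$ for $s_0>(n-d)/2$. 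To sharpen the Sobolev order in the weighted estimate, I would set up the analytic family $T_z m = e^{z\alpha_1\log\langle\cdot\rangle_G}D\breve m$ on the strip $0\le\mathrm{Re}\,z\le 1$, controlled on $\mathrm{Re}\,z=0$ by the Plancherel-based estimate (threshold $s_0>(n-d)/2$) and on $\mathrm{Re}\,z=1$ by the extended Proposition~\ref{prp:sobolevl2estimates} (threshold $s_1>\alpha_1+(n+3)/2$). Stein's interpolation theorem at $z=\theta=\alpha/\alpha_1$, followed by $\alpha_1\to\infty$, yields $\|\langle\cdot\rangle_G^\alpha D\breve m\|_{L^2(G)}\le C\|m\|_{B_{2,2}^\beta}$ for $\beta>\alpha+(n-d)/2$, which is the claim at $p=q=2$.

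To extend to other $p$ (still at $q=2$), for $p\in[1,2]$ I would split $\langle x\rangle_G^\alpha=\langle x\rangle_G^{-\gamma}\langle x\rangle_G^{\alpha+\gamma}$ and apply Hölder, using that polynomial growth of degree $Q_G$ places $\langle\cdot\rangle_G^{-\gamma}\in L^{2p/(2-p)}(G)$ as soon as $\gamma>Q_G(1/p-1/2)$; for $p\in[2,\infty]$ a local Sobolev embedding on $G$ converts $L^2$-control of the derivatives $X^I D\breve m$ with $|I|>(\dim G)/2$ (using $\langle x\rangle_G\sim\langle y\rangle_G$ on unit balls) into an $L^\infty$-bound, which interpolates with the $L^2$-bound to give the desired $L^p$-bound. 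Finally, to vary $q$ the same three-step scheme is executed at the endpoints $q=1$ and $q=\infty$ (using $\ell^1$- and $\ell^\infty$-type bounds on the Fourier coefficients $\hat g(l)$ in Lemma~\ref{lem:fourierdecomposition}, and the trivial weight-free endpoint $\|m\|_{L^2(\sigma)}\le\sigma(K)^{1/2}\|m\|_\infty$ at $q=\infty$); complex interpolation in the Besov scale $[B_{q_0,q_0}^{s_0},B_{q_1,q_1}^{s_1}]_\theta=B_{q,q}^s$ then recovers the exponent $n/q-d/\max\{2,q\}$ for general $q$. The principal obstacle is the triple-axis interpolation bookkeeping (weight, $p$, and $q$) together with the density argument lifting the estimate from $\D(\R^n)$ to general $m\in B_{q,q}^\beta$ with support in $K$, subtlest at $q=\infty$ where $\D(\R^n)$ is not norm-dense and one must work with a weak-$*$ approximation.
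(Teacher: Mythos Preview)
Your approach is correct and follows the same broad architecture as the paper: interpolate between a Plancherel-based unweighted $L^2$ bound (via Lemma~\ref{lem:triebeltrace}) and the preliminary weighted estimate of Proposition~\ref{prp:sobolevl2estimates}, then pass to other $p$ by H\"older ($p<2$) and Sobolev embedding ($p>2$). The execution, however, differs from the paper's in three places, and in each of them the paper's route is noticeably more economical.

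First, you incorporate $D\in\Diff(G)$ by reproving Lemmata~\ref{lem:El2}--\ref{lem:El2poly} with $D\breve E_l$ in place of $\breve E_l$. The paper avoids this entirely: once the case $D=1$ is settled, it writes $m_0=m\,e^{p_*}$, $\xi=e^{-p_*}$, so that $\breve m=\breve m_0*\breve\xi$ and hence $D\breve m=\breve m_0*(D\breve\xi)$; Young's inequality with the submultiplicative weight $\langle\cdot\rangle_G^\alpha$ then gives $\|\langle\cdot\rangle_G^\alpha D\breve m\|_2\le\|\langle\cdot\rangle_G^\alpha\breve m_0\|_2\,\|\langle\cdot\rangle_G^\alpha D\breve\xi\|_1$, and the second factor is a fixed finite constant. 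Second, rather than Stein's analytic family $T_z$, the paper interpolates the single linear operator $M:m\mapsto\Kern_L(m\xi)$ (with $\xi$ a cutoff equal to $1$ on $K$) between the scales $B^\beta_{q,2}\to L^2(G)$ and $B^\beta_{q,2}\to L^2(G,\langle x\rangle_G^{2\alpha}\,dx)$; this is ordinary complex interpolation of a bounded linear map and sidesteps the admissible-growth verification and the limiting argument $\alpha_1\to\infty$. Third, and most significantly, the paper handles the $q$-dependence \emph{inside} the unweighted endpoint: it shows directly that $M:B^\beta_{q,2}\to L^2(G)$ is bounded for $\beta>n/q-d/\max\{2,q\}$ (clear at $q=2$ and $q=\infty$, by the embedding $B^\beta_{q,2}\hookrightarrow B^{\beta-n/q+n/2}_{2,2}$ for $q<2$, and by interpolation for $2<q<\infty$), and only then interpolates with the weighted estimate. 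Your scheme of running the entire argument at $q=1$ and $q=\infty$ and interpolating the final statement works, but the bookkeeping you flag as the ``principal obstacle'' is precisely what the paper's ordering eliminates. The density issue at $q=\infty$ is handled in the paper by passing first to $B^{\beta'}_{q,2}$ with $\beta'<\beta$ (using $B^\beta_{q,q}\hookrightarrow B^{\beta'}_{q,2}$) and extending the a priori inequality by an approximate identity.
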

\begin{proof}
Consider first the case $p=2$, $D = 1$. Let $\xi \in \D(\R^n)$ be such that $\supp \xi \subseteq \Omega$, $\xi|_K \equiv 1$, and let $K' \subseteq \Omega$ be a compact neighborhood of $\supp \xi$. Proposition~\ref{prp:sobolevl2estimates}, together with the continuous inclusion $B_{q,2}^\beta(\R^n) \subseteq B_{2,2}^{\beta - n/2}(\R^n)$, then yields, for $m \in \D(\R^n)$ with $\supp m \subseteq K'$, that
\begin{equation}\label{eq:prelimineq}
\|\breve m\|_{L^2(G,\langle x \rangle_G^{2\alpha} \,dx)} \leq C_{K,\alpha,\beta,q} \|m\|_{B^\beta_{q,2}(\R^n)}
\end{equation}
for $\beta > \alpha + (2n+3)/2$. By the use of a suitable approximate identity, \eqref{eq:prelimineq} can be easily extended to all $m \in B^\beta_{q,2}(\R^n)$ with $\supp m \subseteq \supp \xi$. Hence, if we consider the linear map $M : m \mapsto \Kern_L (m\xi)$, then we have that
\begin{equation}\label{eq:limitatezzaprelim}
\text{$M$ is bounded } B^\beta_{q,2}(\R^n) \to L^2(G,\langle x \rangle_G^{2\alpha} \,dx) \text{ for $\beta > \alpha + (2n+3)/2$}.
\end{equation}
On the other hand, for $\alpha = 0$, the Plancherel formula and Lemma~\ref{lem:triebeltrace} give
\begin{equation}\label{eq:limitatezzabase}
\text{$M$ is bounded } B^\beta_{q,2}(\R^n) \to L^2(G) \text{ for $\beta > \frac{n}{q} - \frac{d}{\max\{2,q\}}$}
\end{equation}
(this is clear for $q = 2$ and $q= \infty$; for $1 \leq q < 2$, we exploit the continuous inclusion $B^\beta_{q,2}(\R^n) \subseteq B_{2,2}^{\beta - n/q + n/2}(\R^n)$; for $2 < q < \infty$, we interpolate). Therefore, by interpolating \eqref{eq:limitatezzaprelim} and \eqref{eq:limitatezzabase}, we get
\[\text{$M$ is bounded } B^\beta_{q,2}(\R^n) \to L^2(G,\langle x \rangle_G^{2\alpha} \,dx) \text{ for $\beta > \alpha + \frac{n}{q} - \frac{d}{\max\{2,q\}}$}.\]
In order to conclude, it is sufficient to notice that, if $\beta > \alpha+n/q-d/\max\{2,q\}$, then for any $\beta' \in \left]\alpha+n/q-d/\max\{2,q\},\beta\right[$ we have the continuous inclusion $B_{q,q}^\beta(\R^n) \subseteq B_{q,2}^{\beta'}(\R^n)$ , and moreover $M m = \breve m$ for every $m$ with $\supp m \subseteq K$.

Take now an arbitrary $D \in \Diff(G)$. For $m \in B^\beta_{\infty,\infty}(\R^n)$ with $\supp m \subseteq K$, set $m_0 = m e^{p_*}$, $\xi = e^{-p_*}$; then $\breve m = \breve m_0 * \breve\xi$, so that, by Young's inequality,
\[\|\langle \cdot \rangle_G^\alpha \, D \breve m\|_2 \leq \|\langle \cdot \rangle_G^\alpha \, \breve m_0\|_2 \|\langle \cdot \rangle_G^\alpha \, D\breve\xi\|_1 \lesssim \|m_0\|_{B^\beta_{q,q}(\R^n)} \lesssim \|m\|_{B^\beta_{q,q}(\R^n)}\]
for $\beta > \alpha+n/q-d/\max\{2,q\}$. This concludes the proof for the case $p = 2$.

The case $1 \leq p < 2$ follows by H\"older's inequality, since $\int_G \langle x \rangle_G^{-\alpha} \,dx < \infty$ for $\alpha > Q_G$. 

Let now $p=\infty$. If $\zeta \in \D(\R^n)$ is nonnegative and $\zeta(e) > 0$, and if we set $w_\alpha = \langle \cdot \rangle_G^\alpha * \zeta$ for $\alpha \geq 0$, then $w_\alpha$ is smooth and nonnegative,
\[\langle x \rangle_G^\alpha \leq C_\alpha w_\alpha(x), \qquad D w_\alpha(x) \leq C_{D,\alpha} \langle x \rangle_G^\alpha\]
for all $D \in \Diff(G)$. If $\Diff_*$ is a basis of the $A \in \Diff(G)$ of order up to $\lfloor (\dim G)/2 \rfloor + 1$, then we have, for $\beta > \alpha+n/q-d/\max\{2,q\}$ and $D \in \Diff(G)$,
\begin{multline*}
\| \langle \cdot \rangle_G^\alpha \, D \breve m\|_\infty \lesssim \| w_\alpha \, D \breve m\|_\infty \lesssim \sum_{A \in \Diff_*} \| A (w_\alpha D \breve f) \|_2 \\
\lesssim \sum_{A_1,A_2 \in \Diff_*} \| (A_1 w_\alpha) (A_2 D \breve m) \|_2 \lesssim \sum_{A \in \Diff_*} \| \langle \cdot \rangle_G^\alpha \, A D \breve m \|_2 \lesssim \|m\|_{B_{q,q}^\beta}
\end{multline*}
by Sobolev's embedding, Leibniz's rule and the case $p=2$.

The remaining case $2 < p < \infty$ follows by interpolation.
\end{proof}

\section{Improved weighted estimates}\label{section:metivier}

The weighted estimates given by Theorem~\ref{thm:weightedestimates} for $p=1$ yield a ``weak multiplier theorem'' for a weighted subcoercive system $L_1,\dots,L_n$ on a Lie group $G$ of polynomial growth: the operator $m(L)$ is bounded on $L^p(G)$ for $1 \leq p \leq \infty$ if the multiplier $m$ is compactly supported and sufficiently smooth; more precisely, by taking $q = \infty$, we require an order of smoothness $\gamma > Q_G/2$, where $Q_G$ is the dimension at infinity of $G$. If $G = \R^n$, then $Q_G$ coincides with the topological dimension $\dim G = n$; for non-abelian (simply connected) nilpotent groups, however, $Q_G > \dim G$. Nevertheless, for a particular class of $2$-step nilpotent groups (namely, Heisenberg and related groups) multiplier theorems have been proved with $(\dim G)/2$ as the regularity threshold \cite{hebisch_multiplier_1993,mller_spectral_1994,hebisch_multiplier_1995,mller_marcinkiewicz_1996,veneruso_marcinkiewicz_2000}. In this section, we extend to our context of weighted subcoercive systems the technique of Hebisch and Zienkiewicz \cite{hebisch_multiplier_1995}, which allows in some cases to lower the threshold in the weighted $L^1$ estimates.

Let $G$ be a nilpotent Lie group, with Lie algebra $\lie{g}$. Let $\lie{z}$ be the center of $\lie{g}$ and set
\begin{equation}\label{eq:centercenter}
\lie{y} = \{ v \in \lie{g} \tc [v,\lie{g}] \subseteq \lie{z}\};
\end{equation}
$\lie{y}$ is a characteristic ideal of $\lie{g}$ containing $\lie{z}$ (in fact, it is the term following $\lie{z}$ in the ascending central series of $\lie{g}$). Let moreover $P : \lie{g} \to \lie{g}/\lie{z}$ be the canonical projection. The bilinear map $[\cdot,\cdot] : \lie{g} \times \lie{g} \to \lie{g}$ induces, by restriction, passage to the quotient and transposition, another bilinear map
\[J : \lie{g}/\lie{z} \times \lie{z}^* \to \lie{y}^*,\]
which we will call the \emph{capacity map}\index{capacity map} of $\lie{g}$, and is uniquely determined by
\[J(P(x),\tau)(y) = \tau([x,y])\]
for $x \in \lie{g}$, $y \in \lie{y}$, $\tau \in \lie{z}^*$. The group $G$ is said to be an \emph{H-type group} if there exists an inner product on $\lie{g}$ such that, for every $\tau \in \lie{z}^*$ of norm $1$, the map $J(\cdot,\tau) : \lie{g}/\lie{z} \to \lie{y}^*$ is an isometric embedding (this condition implies that $\lie{g} = \lie{y}$, so that $G$ is $2$-step). If $G$ is a H-type group, then in particular
\begin{equation}\label{eq:metivier}
|J(\bar x,\tau)| \geq |\bar x| |\tau|
\end{equation}
for suitable norms on $\lie{g}/\lie{z}$, $\lie{z}^*$ and $\lie{y}^*$; the validity of such an inequality defines the class of \emph{M\'etivier groups}\index{Lie group!M\'etivier}, which has been introduced in the study of analytic hypoellipticity of Rockland operators \cite{metivier_hypoellipticite_1980,helffer_conditions_1982}; this class is strictly larger than that of H-type groups (see \cite{mller_singular_2004} for an example), but is still contained in the class of $2$-step groups.

In the following, we consider a more general inequality of the form
\[|J(\bar x,\tau)| \geq w(\bar x) \zeta(\tau)\]
for some non-negative functions $w : \lie{g}/\lie{z} \to \R$, $\zeta : \lie{z}^* \to \R$, which may hold also on higher-step groups. Rewritten as
\[w(\bar x)^{\gamma} \leq |J(\bar x,\tau)|^\gamma \zeta(\tau)^{-\gamma}\]
for some $\gamma > 0$, this inequality will be interpreted via the spectral theorem, in order to control a multiplication operator (corresponding to $w(\bar x)^\gamma$) with a function of the central derivatives (corresponding to $\zeta(\tau)^{-\gamma})$; in this interpretation, it turns out that $|J(\bar x,\tau)|^2$ corresponds to a sum of products of left- and right-invariant differential operators on $G$, therefore the term $|J(\bar x,\tau)|^\gamma$ can be dominated by an a priori estimate for a weighted subcoercive operator on the direct product $G \times G$.

In order to fill in the details, it is convenient to introduce some notation. For every smooth differential operator $D$ on $G$, the identity
\begin{equation}\label{eq:dstar}
(Df)^* = D^\dstar f^*
\end{equation}
defines another differential operator $D^\dstar$ on $G$; the map $D \mapsto D^\dstar$ is a conjugate-linear involutive automorphism of the unital algebra of all smooth differential operators on $G$, which maps left-invariant operators to right-invariant ones and vice versa.

The Lie algebra $\lie{\tilde g}$ of the direct product $\tilde G = G \times G$ is canonically isomorphic to $\lie{g} \oplus \lie{g}$; we define the correspondence $D \mapsto D^\dtwist$ on $\Diff(\tilde G)$ as the unique conjugate-linear automorphism of the unital algebra $\Diff(\tilde G) \cong \UEnA(\lie{g} \oplus \lie{g})$ extending the Lie algebra automorphism $(X,Y) \mapsto (Y,X)$ of $\lie{g} \oplus \lie{g}$.

Let $\xi$ be the unitary representation of $\tilde G$ on $L^2(G)$ given by $\xi(x,y) f = \RA_x \LA_y f$. Then, for every $D \in \Diff(\tilde G)$, $d\xi(D)$ is a smooth differential operator on $G$, and
\[d\xi(D^\dtwist) = d\xi(D)^\dstar.\]
Finally, for $D \in \Diff(G)$, let $\tilde D \in \Diff(\tilde G)$ be defined by $\tilde D(f \otimes g) = (Df) \otimes g$, so that in particular $d\xi(\tilde D) = D$.

\begin{lem}\label{lem:twistrockland}
Let $L = L^+ \in \Diff(G)$ be weighted subcoercive, and set $\Delta = L^2$. Then $\tilde \Delta + \tilde \Delta^\dtwist$ is positive weighted subcoercive on $\tilde G$.
\end{lem}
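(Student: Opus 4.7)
The plan is to identify $\tilde\Delta + \tilde\Delta^\dtwist$, via the canonical isomorphism $\tilde G = G\times G$, with an operator of the form ``$\Delta$ on the first factor plus a conjugate copy of $\Delta$ on the second factor''. Once this identification is made, positivity is immediate, and the weighted subcoercivity follows from two closure properties of the class of weighted subcoercive operators: closure under squaring, and closure under direct-sum formation on a product of groups.

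First I would verify that $\Delta = L^2 = L^+ L$ is positive weighted subcoercive on $G$. Positivity is immediate from $L = L^+$. For the weighted subcoercive property, the homogeneous contraction of $\Delta$ is the square of the contraction of $L$, so the appropriate weighted Rockland-type condition persists with the weighted order merely doubled; this is the relevant closure fact in the sense of \cite{martini_spectral}.

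Next I would unwind the definitions of $\tilde{\ }$ and $\dtwist$. By construction $\tilde\Delta$ acts only on the first factor of $\tilde G$, so under the canonical factorisation $L^2(\tilde G) \cong L^2(G) \otimes L^2(G)$ it coincides with $\Delta \otimes I$; since $\dtwist$ is the conjugate-linear swap automorphism of $\UEnA(\lie g \oplus \lie g)$, $\tilde\Delta^\dtwist$ coincides with $I \otimes \overline{\Delta}$, where $\overline{\Delta}$ denotes the operator obtained from $\Delta$ by conjugating its (complex) PBW coefficients. This $\overline{\Delta}$ has the same reduced symbol as $\Delta$ up to complex conjugation, and is therefore also positive weighted subcoercive on $G$. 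Consequently $\tilde\Delta + \tilde\Delta^\dtwist = \Delta \otimes I + I \otimes \overline{\Delta}$ on $\tilde G$, and positivity of the sum is clear because the two summands commute and are individually positive.

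The remaining step is the direct-sum preservation of weighted subcoercivity on the product group. Either of two routes works: one may check that the reduced symbol of the sum on the contracted Lie algebra $\lie g \oplus \lie g$ (endowed with the product weighting) is the orthogonal sum of the two individual reduced symbols, so that the weighted Rockland-type condition is inherited from each factor; alternatively, the heat semigroup factorises as $e^{-t\Delta} \otimes e^{-t\overline{\Delta}}$, and Gaussian-type bounds on each factor, such as those in \cite[Theorem~2.3]{martini_spectral}, combine to give the required bounds on $\tilde G$. The main obstacle I anticipate is the careful bookkeeping required by the conjugate-linear involution $\dtwist$ — so that $\tilde\Delta^\dtwist$ is genuinely identified with a positive weighted subcoercive copy of $\Delta$ acting on the second factor — together with locating the correct form of the two closure statements within the restrictive definition of weighted subcoercive adopted in \cite{martini_spectral}.
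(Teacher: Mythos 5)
Your overall architecture coincides with the paper's: the identification $(\tilde\Delta+\tilde\Delta^\dtwist)(f\otimes g)=(L^2f)\otimes g+f\otimes(\overline{L}^2 g)$ is exactly the first step of the paper's proof, and the remaining work is, as you say, to know that the conjugate operator is again weighted subcoercive and that such a "sum of squares on the two factors" is positive weighted subcoercive on $G\times G$. Where you differ is in how these two ingredients are certified. The paper does not pass through a separate "closure under squaring" statement at all: it reduces directly to showing that $\overline{L}$ (not $\overline{\Delta}$) is weighted subcoercive and then quotes \cite[Theorem~5.4]{martini_spectral}, which is precisely the product-group statement that $L_1^2\otimes 1+1\otimes L_2^2$ is positive weighted subcoercive whenever $L_1,L_2$ are self-adjoint weighted subcoercive; so the "two closure facts" you anticipate having to locate are packaged there as a single citation, and the mixed weighted Sobolev seminorms on $\lie{g}\oplus\lie{g}$ that make the direct-sum step nontrivial are handled inside that theorem rather than by you.

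The one place where your justification should be tightened is the claim that $\overline{\Delta}$ (equivalently $\overline{L}$) is weighted subcoercive "because its reduced symbol is the conjugate", and especially the alternative route via factorized heat semigroups and Gaussian bounds. Under the restrictive definition adopted in \cite{martini_spectral} (see footnote 1 of the present paper), weighted subcoercivity is a G{\aa}rding-inequality condition on the form and its principal part, and the ter Elst--Robinson characterizations via the Rockland condition on the contraction or via semigroup/Gaussian bounds are exactly the statements whose validity is in question there; so neither "the Rockland-type condition persists" nor "Gaussian bounds on each factor combine" can be taken as a proof of weighted subcoercivity in the sense actually used. The paper's fix is elementary: writing $L=d\RA_G(C)$ for a weighted subcoercive form $C$ and setting $\overline{C}(\alpha)=\overline{C(\alpha)}$, one has $\overline{L}=d\RA_G(\overline{C})$ and
\[\Re\langle\phi,d\RA_G(\overline{C})\phi\rangle=\Re\langle\overline{\phi},d\RA_G(C)\overline{\phi}\rangle,\]
so the G{\aa}rding inequality for $\overline{C}$ follows from that for $C$ by conjugating the test function. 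If you replace your symbol/heat-kernel justifications by this one-line computation and by the citation of the product theorem, your argument becomes the paper's proof.
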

\begin{proof}
For $D \in \Diff(G)$, let $\overline{D} \in \Diff(G)$ be the differential operator uniquely determined by the identity $\overline{D f} = \overline{D} \, \overline{f}$. The map $D \mapsto \overline{D}$ defines a conjugate-linear involutive automorphism of the unital algebra $\Diff(G)$, and it is easily proved that $\tilde{D}^\dtwist(f \otimes g) = f \otimes (\overline{D} g)$. In particular, we have
\[(\tilde \Delta + \tilde \Delta^\dtwist)(f \otimes g) = (L^2 f) \otimes g + f \otimes (\overline{L}^2 g) \]
In view of \cite[Theorem~5.4]{martini_spectral}, since $L$ is self-adjoint and weighted subcoercive, in order to conclude it will be sufficient to show that $\overline{L}$ is weighted subcoercive too.

As in \cite[\S2]{martini_spectral}, fix a weighted structure of $\lie{g}$ and a weighted subcoercive form $C$ such that $d\RA_G(C) = L$. If $\overline{C}$ is the form defined by $\overline{C}(\alpha) = \overline{C(\alpha)}$, then it is easy to see that $\overline{L} = d\RA_G(\overline{C})$, and, on the other hand,
\[\Re \langle \phi, d\RA_G(\overline{C}) \phi \rangle = \Re \langle \overline{\phi}, d\RA_G(C) \overline{\phi} \rangle,\]
thus $\overline{C}$ is also weighted subcoercive by definition.
\end{proof}

Let $L_1,\dots,L_n \in \Diff(G)$ be a weighted subcoercive system on the nilpotent Lie group $G$, and let $\Delta = p(L)^2$, where $p$ is a real polynomial such that $p(L)$ is weighted subcoercive. We define
\[\tilde A = (\tilde \Delta + \tilde \Delta^\dtwist)/2, \qquad A = d\xi(\tilde A) = (\Delta + \Delta^\dstar)/2.\]
By Lemma~\ref{lem:twistrockland}, $\tilde A$ is a (left-invariant) positive weighted subcoercive operator on $\tilde G$, whereas $A$ is a differential operator on $G$ which in general is neither left- nor right-invariant; since $\tilde A, \tilde \Delta, \tilde \Delta^\dtwist$ form a weighted subcoercive system, the corresponding operators $A,\Delta,\Delta^\dstar$ in the representation $\xi$ admit a joint spectral resolution.

Let $h_t$ ($t > 0$) be the convolution kernel of $e^{-t\Delta}$.

\begin{lem}\label{lem:kerneloperations}
Suppose that $u \in L^2(G)$ commutes with all the $h_t$ ($t > 0$). For all Borel $m : \R \to \C$, $u$ is in the domain of $m(\Delta)$ if and only if it is in the domain of $m(A)$, and in this case
\begin{equation}\label{eq:kerneloperations}
m(A) u = m(\Delta) u.
\end{equation}
\end{lem}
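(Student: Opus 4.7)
The proof reduces \eqref{eq:kerneloperations} to a support statement about the joint spectral measure of $u$. Since $\tilde A$, $\tilde \Delta$, $\tilde\Delta^\dtwist$ lie in a commutative subalgebra of $\Diff(\tilde G)$ extending a weighted subcoercive operator, their images $A$, $\Delta$, $\Delta^\dstar$ on $L^2(G)$ are self-adjoint, commute strongly, and admit a joint spectral resolution $E$ on $\R^3$. By the very definition of $A$, this $E$ is supported on the hyperplane $\Pi = \{\lambda_0 = (\lambda_1 + \lambda_2)/2\}$. The whole lemma will follow once we show that the scalar spectral measure $\mu_u = \langle E(\cdot)u, u\rangle$ of $u$ is concentrated on the diagonal $\mathcal{D} = \{\lambda_1 = \lambda_2\}$: for then $\supp \mu_u \subseteq \Pi \cap \mathcal{D}$, so the functions $\lambda \mapsto m(\lambda_0)$ and $\lambda \mapsto m(\lambda_1)$ agree $\mu_u$-a.e., whence the integrability condition $\int |m|^2 \, d\mu_u < \infty$ (which characterises membership in the respective domain) and the value of the spectral integral applied to $u$ are identical for $m(A)$ and $m(\Delta)$, giving both the equivalence of the domain memberships and the identity \eqref{eq:kerneloperations}.

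To obtain the concentration on $\mathcal{D}$, I would identify the two heat semigroups explicitly as convolutions: $e^{-t\Delta}u = u * h_t$ holds by construction, while — using that $\Delta^\dstar$ is right-invariant (as the image of the left-invariant $\Delta$ under $D \mapsto D^\dstar$) together with the self-adjointness of $\Delta$, which forces $h_t^* = h_t$ — a short calculation based on $(f*g)^* = g^* * f^*$ yields $e^{-t\Delta^\dstar}u = h_t * u$. The hypothesis $u * h_t = h_t * u$ is then exactly $(e^{-t\Delta} - e^{-t\Delta^\dstar})u = 0$ for every $t > 0$, so the joint functional calculus gives
\[
\int_{\R^3} \bigl(e^{-t\lambda_1} - e^{-t\lambda_2}\bigr)^2 \, d\mu_u(\lambda) \;=\; 0 \qquad \text{for every } t > 0.
\]
As both $\Delta$ and $\Delta^\dstar$ are non-negative and the family $\{\lambda \mapsto e^{-t\lambda}\}_{t>0}$ separates points of $[0,\infty)$, running this equality along a countable dense set of values of $t$ forces $\lambda_1 = \lambda_2$ for $\mu_u$-almost every $\lambda$, as required.

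The principal technical obstacle is the identification $e^{-t\Delta^\dstar}u = h_t * u$: this demands careful tracking of the paper's conventions (right Haar measure, the definition of convolution, the involution $f \mapsto f^*$, and the definition \eqref{eq:dstar} of $D \mapsto D^\dstar$), so as to verify that left convolution with $h_t$ really coincides with the functional-calculus semigroup generated by $\Delta^\dstar$. Once this identification is in place, the spectral argument above is brief and routine.
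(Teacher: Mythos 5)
Your argument is correct, and it organizes the final step differently from the paper. The core is shared: both proofs rest on the identifications $e^{-t\Delta}u = u*h_t$ and $e^{-t\Delta^\dstar}u = h_t*u$ (the latter exactly as you describe, via \eqref{eq:dstar} and $h_t^*=h_t$), so that the commutation hypothesis becomes a statement about heat semigroups. The paper then also computes $e^{-tA}f = h_{t/2}*f*h_{t/2}$, deduces $e^{-tA}u=e^{-t\Delta}u$, and passes from the span of the exponentials $e^{-t\lambda}$ to $C_0$ by Stone--Weierstrass and to general Borel $m$ (with the domain statement) by the spectral theorem and dominated convergence. You instead never need the formula for $e^{-tA}$: using the joint spectral resolution $E$ of $A,\Delta,\Delta^\dstar$ (whose existence the paper records just before the lemma), the vanishing of $(e^{-t\Delta}-e^{-t\Delta^\dstar})u$ forces the scalar measure $\mu_u=\langle E(\cdot)u,u\rangle$ onto $\{\lambda_1=\lambda_2\}$ --- note a single $t$ suffices, since $\lambda\mapsto e^{-t\lambda}$ is injective, so the countable dense family of $t$'s is superfluous --- and together with $\supp E\subseteq\{\lambda_0=(\lambda_1+\lambda_2)/2\}$ this yields $\lambda_0=\lambda_1$ $\mu_u$-a.e., giving the domain equivalence and \eqref{eq:kerneloperations} for all Borel $m$ in one stroke; this is arguably cleaner on the domain issue, which the paper dispatches in a single sentence. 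The one point you should justify rather than assert is the support claim for $E$: the identity $A=(\Delta+\Delta^\dstar)/2$ holds a priori as differential operators, and transferring it to the joint spectrum uses the fact, available in the weighted subcoercive framework of \cite{martini_spectral}, that smooth vectors form a core on which spectral integrals of polynomials in the system agree with the differential operators, so polynomial relations among the operators are inherited by $\supp E$. With that reference made explicit, your proof is complete.
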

\begin{proof}
From \eqref{eq:dstar} we easily deduce $e^{-t\Delta^\dstar} f = (f^* * h_t)^* = h_t * f$ and
\[e^{-tA} f = e^{-t\Delta/2} e^{-t\Delta^\dstar/2} f = h_{t/2} * f * h_{t/2},\]
so that $e^{-tA} u = e^{-t\Delta} u$. If $\xi_t(\lambda) = e^{-t \lambda}$ and $\JJ_0 = \Span \{\xi_t \tc t > 0\}$, then we obtain \eqref{eq:kerneloperations} for $m \in \JJ_0$. It is not difficult to extend \eqref{eq:kerneloperations} to $m \in C_0(\R^n)$ by the Stone-Weierstrass theorem, and then to all Borel $m : \R \to \C$ by the spectral theorem and dominated convergence.
\end{proof}

\begin{lem}\label{lem:leftrightdiff}
Let $X \in \lie{g}$. Then, for all $v \in \lie{g}$,
\[(X + X^\dstar)|_{\exp(v)} = d\exp_v([v,X]).\]
\end{lem}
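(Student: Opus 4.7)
The plan is to identify $X^\dstar$ explicitly as (minus) a right-invariant vector field, and then reduce the identity to the classical formula for the differential of $\exp$.

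First, since $G$ is nilpotent and hence unimodular, the involution reduces to $f^*(x) = \overline{f(x^{-1})}$. Unpacking the definition of $X^\dstar$ via $(Xf)^* = X^\dstar f^*$, a direct differentiation combined with the sign arising from $\frac{d}{dt}\big|_{t=0}\exp(-tX) = -X$ shows that $X^\dstar = -\tilde X$, where $\tilde X$ denotes the unique right-invariant vector field on $G$ whose value at the identity equals $X$. Consequently, as tangent vectors at $\exp(v)$,
\[(X + X^\dstar)|_{\exp(v)} = (dL_{\exp v})_e X - (dR_{\exp v})_e X.\]

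Next, the standard conjugation identity $\exp(tY)\exp(v) = \exp(v)\exp\bigl(t\Ad(\exp(-v))Y\bigr)$ gives $(dR_{\exp v})_e = (dL_{\exp v})_e \circ \Ad(\exp(-v))$. Combined with $\Ad(\exp(-v)) = e^{-\ad v}$, this reduces the above to
\[(X + X^\dstar)|_{\exp(v)} = (dL_{\exp v})_e \bigl(I - e^{-\ad v}\bigr)(X).\]

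Finally, I invoke the classical formula for the differential of $\exp$ at $v$,
\[d\exp_v(Y) = (dL_{\exp v})_e \circ \frac{I - e^{-\ad v}}{\ad v}(Y) \qquad (Y \in \lie{g}),\]
and apply it to $Y = [v,X] = (\ad v)(X)$; the factor of $\ad v$ cancels, producing $d\exp_v([v,X]) = (dL_{\exp v})_e (I - e^{-\ad v})(X)$, which matches the expression from the previous step. The only delicate point in the whole argument is the sign bookkeeping in the identification $X^\dstar = -\tilde X$ (the modular-function factor drops because $G$ is nilpotent) and ensuring that the sign convention in the $d\exp$ formula is consistent with the paper's convention for left-invariant vector fields; everything else is a routine formal manipulation.
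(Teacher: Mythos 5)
Your argument is correct, and it takes a somewhat different route from the paper's. You make the identification $X^\dstar = -\tilde X$ (minus the right-invariant field agreeing with $X$ at $e$; legitimate here since $G$ is nilpotent, hence unimodular, and consistent with the paper's convention that elements of $\Diff(G)$ act as $d\RA$), write $(X+X^\dstar)|_{\exp v} = (dL_{\exp v})_e X - (dR_{\exp v})_e X$, convert the right translation via $\Ad(\exp(-v)) = e^{-\ad v}$, and then invoke the classical closed-form expression $d\exp_v = (dL_{\exp v})_e \circ \bigl(I - e^{-\ad v}\bigr)(\ad v)^{-1}$ (understood as the entire power series in $\ad v$), cancelling $\ad v$ against $[v,X] = (\ad v)X$. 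The paper instead observes that $X + X^\dstar$ generates the conjugation flow, so that $(X+X^\dstar)|_{\exp(v)} f = \frac{d}{dt}\big|_{t=0} f\bigl(\exp(-tX)\exp(v)\exp(tX)\bigr)$, rewrites the curve as $\exp\bigl(\Ad(\exp(-tX))v\bigr)$, and concludes by the chain rule, since $\frac{d}{dt}\big|_{t=0}\Ad(\exp(-tX))v = [v,X]$; this avoids any explicit formula for $d\exp_v$. The two proofs share the same first step in substance (identifying $X+X^\dstar$ with the infinitesimal conjugation), but yours trades the paper's one-line chain-rule computation for the standard derivative-of-exponential formula, which is a perfectly valid, if slightly heavier, ingredient; your sign bookkeeping and the use of $\Ad(\exp(-v)) = e^{-\ad v}$ are consistent with the paper's conventions. (Minor notational caution: the paper reserves $\tilde D$ for the lift of $D$ to $G \times G$, so you may want a different symbol for the right-invariant field.)
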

\begin{proof}
The semigroup associated to $\tilde X + \tilde X^\dtwist$ is $t \mapsto (\exp(tX),\exp(tX))$, so that, for all $f \in \D(G)$, $v \in \lie{g}$,
\[(X+X^\dstar)|_{\exp(v)} f = \left.\frac{d}{d t}\right|_{t=0} f(\exp(-tX) \exp(v) \exp(tX)).\]
Since $\exp(-tX) \exp(v) \exp(tX) = \exp(\Ad(\exp(-tX))(v))$, we have
\[\left.\frac{d}{d t}\right|_{t=0} (\exp(-tX) \exp(v) \exp(tX)) = d\exp_v(\ad(-X)(v)) = d\exp_v([v,X]),\]
which is the conclusion.
\end{proof}

In the following, we will identify $G$ with $\lie{g}$ via the exponential map. Choose a basis $\nu_1,\dots,\nu_r$ of $(\lie{g}/\lie{z})^*$ and a basis $T_1,\dots,T_d$ of $\lie{z}$, and set $P_j = \nu_j \circ P$. The functions $P_j : G \to \R$ can be thought of as multiplication operators on $L^2(G)$, and it is not difficult to show that the operators
\[P_1,\dots,P_r,-iT_1,\dots,-iT_d\]
are (essentially) self-adjoint on $L^2(G)$ and commute strongly pairwise, so that they admit a joint spectral resolution.

Through the chosen bases, $J$ can be identified with a bilinear map $\R^r \times \R^d \to \lie{y}^*$. Therefore, for every $Y \in \lie{y}$, we have a bilinear form $J(\cdot,\cdot)(Y) : \R^r \times \R^d \to \R$, which in fact is a polynomial; we can then evaluate this particular polynomial in the operators $P_1,\dots,P_r,-iT_1,\dots,-iT_d$, and denote by $J(P,-iT)(Y)$ the resulting operator on $L^2(G)$. Finally, choose an inner product on $\lie{y}$ (which induces an inner product on $\lie{y}^*$) and an orthonormal basis $\{Y_l\}_l$ of $\lie{y}$; then also the map $|J(\cdot,\cdot)|^2$ is a polynomial, thus as before we can consider the operator $|J(P,-iT)|^2$ on $L^2(G)$, and clearly $|J(P,-iT)|^2 = \sum_l (J(P,-iT)(Y_l))^2$.

\begin{lem}\label{lem:jdiffop}
For all $Y \in \lie{y}$, $J(P,-iT)(Y)$ is a differential operator on $G$; more precisely, $J(P,-iT)(Y) = -i(Y + Y^\dstar)$. In particular
\[|J(P,-iT)|^2 = -\sum_l (Y_l + Y_l^\dstar)^2 = d\xi\left( - \sum_l (\tilde Y_l + \tilde Y_l^\dtwist)^2 \right).\]
\end{lem}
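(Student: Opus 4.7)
The plan is to compute $(Y+Y^\dstar)|_{\exp(v)}$ explicitly via Lemma~\ref{lem:leftrightdiff}, then match the resulting expression term-by-term with the operator obtained by substituting the strongly commuting self-adjoint operators $P_j,-iT_k$ into the polynomial $J(\cdot,\cdot)(Y)$. The hypothesis $Y\in\lie{y}$ is used decisively: it forces $[v,Y]\in\lie{z}$, which both simplifies the action of $d\exp_v$ and makes $J(\cdot,\cdot)(Y)$ a genuine bilinear polynomial in the chosen coordinates.

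For the first simplification, observe that since $Z\in\lie{z}$ is central and $G$ is nilpotent, $\exp(v+sZ)=\exp(v)\exp(sZ)$, hence $d\exp_v(Z)f=(Zf)(\exp(v))$ with $Z$ on the right read as the corresponding left-invariant vector field. Applying this with $Z=[v,Y]\in\lie{z}$ and invoking Lemma~\ref{lem:leftrightdiff}, one gets
\[
(Y+Y^\dstar)|_{\exp(v)} f = ([v,Y]f)(\exp(v)).
\]
Expanding $[v,Y]=\sum_k c_k(v)T_k$: since $[\lie{z},Y]=0$, each linear map $c_k$ factors through $P$, so $c_k(v)=\sum_j b_{k,j}\,\nu_j(P(v))=\sum_j b_{k,j}\,P_j(\exp(v))$ for suitable constants $b_{k,j}$. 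Therefore $Y+Y^\dstar=\sum_{j,k}b_{k,j}P_j T_k$ as a differential operator on $G$.

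To conclude, from the very definition of $J$ we have $J(P(v),\tau)(Y)=\tau([v,Y])=\sum_{j,k}b_{k,j}\,\nu_j(P(v))\,\tau(T_k)$, so in the coordinates induced by $\{\nu_j\}$ and $\{T_k\}$ the map $J(\cdot,\cdot)(Y)$ is the bilinear polynomial $\sum_{j,k}b_{k,j}\,p_j\,\tau_k$. Substituting $p_j\leftarrow P_j$ and $\tau_k\leftarrow -iT_k$ is unambiguous because $[P_j,T_k]=0$: in exponential coordinates the left-invariant vector field of $T_k\in\lie{z}$ is the partial derivative in the corresponding central direction, while $P_j$ depends only on coordinates transverse to $\lie{z}$. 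This yields $J(P,-iT)(Y)=-i\sum_{j,k}b_{k,j}P_j T_k=-i(Y+Y^\dstar)$. Squaring and summing over the orthonormal basis $\{Y_l\}$ of $\lie{y}$ gives $|J(P,-iT)|^2=-\sum_l(Y_l+Y_l^\dstar)^2$, and the final rewriting as $d\xi\bigl(-\sum_l(\tilde Y_l+\tilde Y_l^\dtwist)^2\bigr)$ follows from $d\xi(\tilde Y_l+\tilde Y_l^\dtwist)=Y_l+Y_l^\dstar$ and the fact that $d\xi$ extends to an algebra homomorphism on $\Diff(\tilde G)$.

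The only delicate point is the reduction $d\exp_v(Z)f=(Zf)(\exp(v))$ for central $Z$, where nilpotency and centrality interact to allow adding $v$ and $sZ$ inside the exponential; everything else is a transparent bookkeeping identification of a polynomial with its evaluation at pairwise commuting operators.
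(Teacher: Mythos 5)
Your proof is correct and follows essentially the same route as the paper's: both arguments combine Lemma~\ref{lem:leftrightdiff} with the fact that central elements act as constant (left-invariant) vector fields in exponential coordinates, and then match the bilinear expansion of $J(\cdot,\cdot)(Y)$ in the bases $\{\nu_j\}$, $\{T_k\}$ against $\sum_{j,k}P_jT_k$-terms. The only cosmetic difference is the direction of the computation (you start from $Y+Y^\dstar$ and recover $J(P,-iT)(Y)$, while the paper expands $J(P,-iT)(Y)$ pointwise and lands on $(Y+Y^\dstar)|_x$), and you spell out via Baker--Campbell--Hausdorff the constancy claim that the paper only remarks in passing.
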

\begin{proof}
Let $\hat T_1,\dots,\hat T_d \in \lie{z}^*$ and $\hat \nu_1,\dots,\hat \nu_r \in \lie{g}/\lie{z}$ be the dual bases of $T_1,\dots,T_d$ and $\nu_1,\dots,\nu_r$ respectively. Then, by bilinearity, for every $Y \in \lie{y}$,
\[J(P,-iT)(Y) = -i \sum_{j,k} J(\hat \nu_j,\hat T_k)(Y) P_j T_k.\]
This shows that $J(P,-iT)(Y)$ is a differential operator on $G$. In fact, for all $x \in G = \lie{g}$, we have $\sum_j P_j(x) \hat\nu_j = P(x)$,
therefore
\[\begin{split}
J(P,-iT)(Y)|_x  &= -i \sum_k J(P(x),\hat T_k)(Y) T_k \\
&= -i \sum_k \hat T_k([x,Y]) T_k = -i [x,Y] = -i (Y + Y^\dstar)|_x
\end{split}\]
by Lemma~\ref{lem:leftrightdiff} (notice that, since $T_1,\dots,T_d$ are central, they are constant vector fields in exponential coordinates).
\end{proof}

Since $T_1,\dots,T_d$ are central, the left-invariant differential operators
\begin{equation}\label{eq:centralsystem}
L_1,\dots,L_n,-iT_1,\dots,-iT_d
\end{equation}
on $G$ are a weighted subcoercive system. We can thus consider the Planche\-rel measure $\sigma'$ on $\R^n \times \lie{z}^*$ associated to this system, which can be shown not to depend on the choice of the basis of $\lie{z}$.

The core of the technique under discussion is contained in the following

\begin{prp}\label{prp:partialweight}
Suppose that, for some nonnegative Borel functions $w : \lie{g}/\lie{z} \to \R$ and $\zeta : \lie{z}^* \to \R$, we have
\[|J(\bar x,\tau)| \geq w(\bar x) \, \zeta(\tau) \qquad \text{for all $\bar x \in \lie{g}/\lie{z}$, $\tau \in \lie{z}^*$.}\]
If $K \subseteq \R^n$ is compact and $\gamma \geq 0$, then, for all $m \in \D(\R^n)$ with $\supp m \subseteq K$,
\[\||w \circ P|^\gamma \breve m\|_2^2 \leq C_{K,\gamma} \int_{\R^n \times \lie{z}^*} |m(\lambda)|^2 \,\zeta(\tau)^{-2\gamma} \,d\sigma'(\lambda,\tau).\]
\end{prp}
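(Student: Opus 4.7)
The plan is to lift the pointwise hypothesis to an operator inequality on $L^2(G)$ via the joint spectral calculus of the commuting family $P_1,\dots,P_r,-iT_1,\dots,-iT_d$, invoke Plancherel for the weighted subcoercive system $L_1,\dots,L_n,-iT_1,\dots,-iT_d$, and close with an absorption argument exploiting the weighted subcoercive structure of $\tilde A$ on $\tilde G$.

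Since (in exponential coordinates on the nilpotent group $G$) the $P_j$ depend only on non-central coordinates while the $T_k$ are constant-coefficient central derivatives, the family $P_1,\dots,P_r,-iT_1,\dots,-iT_d$ commutes strongly on $L^2(G)$ and admits a joint spectral resolution on $\R^r\times\lie{z}^*$. By Lemma~\ref{lem:jdiffop}, the differential operator $B:=-\sum_l(Y_l+Y_l^\dstar)^2$ coincides with $|J(P,-iT)|^2$ in this joint functional calculus, so the hypothesis lifts to
\[w(P)^{2\gamma}\zeta(-iT)^{2\gamma}\leq B^\gamma.\]
Assume the right-hand side of the conclusion is finite (otherwise the claim is trivial) and set $v:=\zeta(-iT)^{-\gamma}\breve m$; by strong commutativity of $-iT$ with $L$, $v=\Kern_{L,-iT}(m(\lambda)\zeta(\tau)^{-\gamma})$ and Plancherel gives $\|v\|_2^2=\int|m|^2\zeta^{-2\gamma}\,d\sigma'$. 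As $\zeta(-iT)^{-\gamma}$ commutes with $B$, combining with the spectral inequality yields
\[\|w(P)^\gamma\breve m\|_2^2\leq\langle B^\gamma v,v\rangle=\|B^{\gamma/2}v\|_2^2,\]
reducing the proof to $\|B^{\gamma/2}v\|_2\leq C_{K,\gamma}\|v\|_2$.

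For this absorption the key observation is the identity $\Delta^\dstar v=\Delta v$: using $(\Delta^\dstar v)^*=\Delta v^*$, $(\Kern_{L,-iT}F)^*=\Kern_{L,-iT}(\bar F)$ and the reality of $p$, one computes $\Delta^\dstar v=\Kern_{L,-iT}(p(\lambda)^2 m\zeta^{-\gamma})=\Delta v$. Hence $Av=\Delta v$ and for any $k\in\N$,
\[\|(A+I)^k v\|_2^2=\int(p(\lambda)^2+1)^{2k}|m(\lambda)|^2\zeta(\tau)^{-2\gamma}\,d\sigma'\leq C_{K,k}\|v\|_2^2,\]
since $|p(\lambda)|$ is bounded on the compact set $K\supseteq\supp m$. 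Let $k_0:=\lceil\gamma/2\rceil$; the bound $\lambda^\gamma\leq(1+\lambda)^{2k_0}$ on $[0,\infty)$ gives $B^\gamma\leq(I+B)^{2k_0}$, hence $\|B^{\gamma/2}v\|_2\leq\|(I+B)^{k_0}v\|_2$, and expanding in powers of $B$ reduces matters to bounding $\|B^j v\|_2$ for $0\leq j\leq k_0$. Each $B^j=d\xi(\tilde B^j)$ for the left-invariant differential operator $\tilde B^j$ on $\tilde G$, and since $\tilde A$ is weighted subcoercive on $\tilde G$ by Lemma~\ref{lem:twistrockland}, the standard a priori estimate $\|\tilde B^j u\|_{L^2(\tilde G)}\leq C_j\|(\tilde A+I)^{k_j}u\|_{L^2(\tilde G)}$ transfers via the unitary representation $\xi$ to $\|B^j v\|_2\leq C_j\|(A+I)^{k_j}v\|_2\leq C_{K,j}\|v\|_2$, closing the argument.

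The chief technical hurdle is the transfer of the weighted subcoercive a priori estimate from $\tilde G$ to the representation $\xi$ on $L^2(G)$, which requires checking compatibility of essentially self-adjoint extensions on a common dense invariant subspace --- a point that is handled by the general representation theory of weighted subcoercive operators in \cite{martini_spectral}. The conceptual linchpin is the identity $\Delta^\dstar=\Delta$ on $v$, which together with the compact support of $m$ converts an unbounded operator bound into a $\|v\|_2$-bound.
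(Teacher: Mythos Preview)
Your proof is correct and follows essentially the same route as the paper: lift the pointwise inequality to the joint functional calculus of $P_1,\dots,P_r,-iT_1,\dots,-iT_d$, use the a priori estimate coming from weighted subcoercivity of $\tilde A$ on $\tilde G$ (transferred through the representation $\xi$), replace $A$ by $\Delta$ on the kernel, and conclude by Plancherel for the enlarged system. The only cosmetic differences are that the paper packages the a priori estimate as $\||J(P,-iT)|^\gamma\psi\|_2\le C_\gamma\|q_\gamma(A)\psi\|_2$ directly (rather than going via $(I+B)^{k_0}$ and individual powers $B^j$), and invokes Lemma~\ref{lem:kerneloperations} for the identity $q_\gamma(A)\breve m=q_\gamma(\Delta)\breve m$, whereas you re-derive the special case $\Delta^\dstar v=\Delta v$ by hand; both arguments exploit that the relevant kernel commutes with the heat kernels $h_t$.
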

\begin{proof}
From the hypothesis we deduce, by the spectral theorem,
\[\| |w \circ P|^\gamma f  \|_2 \leq C_\gamma \| |J(P,-iT)|^\gamma \zeta(-iT)^{-\gamma} f\|_2\]
for $f \in L^2(G)$. By Lemma~\ref{lem:jdiffop}, $|J(P,-iT)|^2 = d\xi(D)$ for some $D \in \Diff(\tilde G)$; since $\tilde A$ is weighted subcoercive on $\tilde G$, by Theorem~2.3(iii) of \cite{martini_spectral}, for some polynomial $q_\gamma$ we have, in the representation $\xi$,
\[\| |J(P,-iT)|^\gamma \psi \|_2 \leq C_\gamma \| q_\gamma(A) \psi\|_2,\]
therefore, by putting the two inequalities together, we get
\[\| |w \circ P|^\gamma f \|_2 \leq C_\gamma \|\zeta(-iT)^{-\gamma} q_\gamma(A) f \|_2\]
(since the $T_j$ commute strongly with $A$). In particular, if we take $f = \breve m$,
\[\| |w \circ P|^\gamma \breve m \|_2 \leq C_\gamma \|\zeta(-iT)^{-\gamma} q_\gamma(A) \breve m \|_2 = C_\gamma \|\zeta(-iT)^{-\gamma} q_\gamma(\Delta) \breve m \|_2\]
by Lemma~\ref{lem:kerneloperations}, since $\breve m$ commutes with all the $h_t$. On the other hand, by the Plancherel formula for the system \eqref{eq:centralsystem},
\[\|\zeta(-iT)^{-\gamma} q_\gamma(\Delta) \breve m \|_2^2 \leq C_{K,\gamma} \int_{\R^n \times \lie{z}^*} |\breve m(\lambda)|^2 \, \zeta(\tau)^{-2\gamma} \,d\sigma'(\lambda,\tau),\]
where $C_{K,\gamma}  = \sup_{\lambda \in K} q_\gamma(p(\lambda)^2)^2$, and we are done.
\end{proof}

Simple manipulations give a slightly more general form of the previous estimate:

\begin{cor}\label{cor:partialweight}
Suppose that, for some nonnegative Borel functions $w_j : \lie{g}/\lie{z} \to \R$ and $\zeta_j : \lie{z}^* \to \R$ ($j=1,\dots,h$), we have
\[|J(\bar x,\tau)| \geq w_j(\bar x) \, \zeta_j(\tau) \qquad \text{for all $\bar x \in \lie{g}/\lie{z}$, $\tau \in \lie{z}^*$,}\]
and set $\tilde w_j(x) = 1 + w_j(P(x))$. If $K \subseteq \R^n$ is compact, then for all $m \in \D(\R^n)$ with $\supp m \subseteq K$ and for all $\vec{\gamma} = (\gamma_1,\dots,\gamma_h) \geq 0$ we have
\[\| \tilde w_1^{\gamma_1} \cdots \tilde w_h^{\gamma_h} \, \breve m \|^2_{L^2(G)} 
\leq C_{K,\vec\gamma} \int_{\R^n \times \lie{z}^*} |m(\lambda)|^2 \,\prod_{j=1}^h (1+\zeta_j(\tau)^{-2\gamma_j}) \,d\sigma'(\lambda,\tau).\]
\end{cor}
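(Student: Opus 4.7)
The plan is to reduce Corollary \ref{cor:partialweight} to Proposition \ref{prp:partialweight} by a subset-expansion argument, together with the observation that the single-weight hypothesis of the Proposition is stable under weighted geometric means. This last observation is really the only non-algebraic step; everything else is routine manipulation.

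First, I would dispose of the additive constants in $\tilde w_j = 1 + w_j \circ P$ via the elementary inequality $(1+x)^{2\gamma}\leq C_\gamma(1+x^{2\gamma})$ (valid for $x,\gamma\geq 0$), obtaining
\[\prod_{j=1}^h \tilde w_j^{2\gamma_j} \leq C_{\vec\gamma}\prod_{j=1}^h\bigl(1+(w_j\circ P)^{2\gamma_j}\bigr) = C_{\vec\gamma}\sum_{S\subseteq\{1,\dots,h\}}\prod_{j\in S}(w_j\circ P)^{2\gamma_j}.\]
Integrating against $|\breve m|^2$ reduces the corollary to the pair of estimates $\|\breve m\|_2^2 \leq C_K \int|m|^2\,d\sigma'$ (the Plancherel identity for the system \eqref{eq:centralsystem}, which takes care of $S=\emptyset$) and, for each nonempty $S$,
\[\Bigl\|\prod_{j\in S}(w_j\circ P)^{\gamma_j}\,\breve m\Bigr\|_2^2 \leq C_{K,\vec\gamma,S}\int_{\R^n\times\lie{z}^*}|m(\lambda)|^2\prod_{j\in S}\zeta_j(\tau)^{-2\gamma_j}\,d\sigma'(\lambda,\tau);\]
the factor $\prod_j(1+\zeta_j^{-2\gamma_j})$ in the target estimate then emerges from the subset identity $\sum_S\prod_{j\in S}\zeta_j^{-2\gamma_j} = \prod_j(1+\zeta_j^{-2\gamma_j})$.

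To handle a fixed nonempty $S$, I set $\Gamma_S = \sum_{j\in S}\gamma_j$ and define the combined weights
\[w_S = \prod_{j\in S}w_j^{\gamma_j/\Gamma_S}, \qquad \zeta_S = \prod_{j\in S}\zeta_j^{\gamma_j/\Gamma_S}.\]
Since the exponents $\gamma_j/\Gamma_S$ sum to $1$, raising each hypothesis $w_j(\bar x)\zeta_j(\tau) \leq |J(\bar x,\tau)|$ to the $(\gamma_j/\Gamma_S)$-th power and multiplying over $j\in S$ gives $w_S(\bar x)\zeta_S(\tau) \leq |J(\bar x,\tau)|$ pointwise. Proposition \ref{prp:partialweight} then applies to the pair $(w_S,\zeta_S)$ with exponent $\Gamma_S$; as $(w_S\circ P)^{\Gamma_S} = \prod_{j\in S}(w_j\circ P)^{\gamma_j}$ and $\zeta_S^{-2\Gamma_S} = \prod_{j\in S}\zeta_j^{-2\gamma_j}$, its output is precisely the displayed $S$-estimate. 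Summing over $S$ and collecting constants then completes the proof. The main obstacle, such as it is, is spotting the geometric-mean construction of $(w_S,\zeta_S)$; the rest is bookkeeping.
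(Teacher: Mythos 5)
Your proposal is correct and follows essentially the same route as the paper: the paper likewise expands $\prod_j \tilde w_j^{2\gamma_j}$ over subsets $I\subseteq\{1,\dots,h\}$, forms the weighted geometric means $w_{\vec\gamma,I}=\prod_{j\in I}w_j^{\gamma_j/\gamma_I}$, $\zeta_{\vec\gamma,I}=\prod_{j\in I}\zeta_j^{\gamma_j/\gamma_I}$ (which still satisfy the hypothesis of Proposition~\ref{prp:partialweight} since the exponents sum to one), applies that proposition to each subset, and reassembles via the identity $\sum_I\prod_{j\in I}\zeta_j^{-2\gamma_j}=\prod_j(1+\zeta_j^{-2\gamma_j})$. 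The only cosmetic difference is that you treat $S=\emptyset$ separately by the Plancherel formula, whereas the paper subsumes it in the repeated application of the proposition (the case $\gamma=0$).
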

\begin{proof}
If we set, for $I \subseteq \{1,\dots,h\}$,
\[\gamma_I = \sum_{j\in I} \gamma_j, \qquad w_{\vec{\gamma},I}(\bar x) = \prod_{j \in I} w_j(\bar x)^{\gamma_j/\gamma_I}, \qquad \zeta_{\vec{\gamma},I}(\tau) = \prod_{j \in I} \zeta_j(\tau)^{\gamma_j/\gamma_I},\]
then clearly
\[|J(\bar x,\tau)| \geq w_I(\bar x) \, \zeta_I(\tau)  \qquad \text{for all $\bar x \in \lie{g}/\lie{z}$, $\tau \in \lie{z}^*$,}\]
and moreover
\[\prod_{j=1}^h \tilde w_j^{2\gamma_j} \leq C_{\vec{\gamma}} \sum_{I \subseteq \{1,\dots,h\}} (w_{\vec{\gamma},I} \circ P)^{2\gamma_I}, \quad \prod_{j=1}^h (1+\zeta_j^{-2\gamma_j}) = \sum_{I \subseteq \{1,\dots,h\}} \zeta_{\vec{\gamma},I}^{-2\gamma_I},\]
therefore the conclusion follows by repeated application of Proposition~\ref{prp:partialweight}.
\end{proof}

Under some particular hypotheses, we may therefore control a weighted $L^2$ norm of $\breve m$ in terms of an $L^2(\sigma_{\tilde\zeta})$ norm of $m$, where $\sigma_{\tilde\zeta}$ is the push-forward of
\begin{equation}\label{eq:measurenearlyproduct}
\tilde\zeta(\tau) \,d\sigma'(\lambda,\tau)
\end{equation}
on the first factor of $\R^n \times \lie{z}^*$, for some nonnegative function $\tilde\zeta : \lie{z}^* \to \R$.

\begin{lem}\label{lem:homogeneouspushforward}
(i) Suppose that $\tilde\zeta \in L^1_\loc(\lie{z}^*)$ is nonnegative. Then $\sigma_{\tilde\zeta}$ is a regular Borel measure on $\R^n$.

(ii) Suppose moreover that $G$ is a homogeneous group, with dilations $\delta_t$ and homogeneous dimension $Q_\delta$, and that $L_1,\dots,L_n$ is a homogeneous system, with associated dilations $\epsilon_t$. If $\tilde\zeta$ is homogeneous of degree $a$, i.e., $\tilde\zeta(\tau \circ \delta_t) = t^a \tilde\zeta(\tau)$, then $\sigma_{\tilde\zeta}$ is homogeneous of degree $Q_\delta+a$, i.e., $\sigma_{\tilde\zeta}(\epsilon_t(A)) = t^{Q_\delta+a} \sigma_{\tilde\zeta}(A)$.
\end{lem}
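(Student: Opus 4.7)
Writing $\pi\colon \R^n \times \lie{z}^* \to \R^n$ for the projection, the definition of the pushforward reads
\[
\sigma_{\tilde\zeta}(A) \;=\; \int_{A\times\lie{z}^*} \tilde\zeta(\tau)\,d\sigma'(\lambda,\tau).
\]

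For part (i), the argument is essentially a bookkeeping check. Since $\tilde\zeta$ is nonnegative and in $L^1_\loc(\lie{z}^*)$, it is Borel--measurable, so $\tilde\zeta\,d\sigma'$ is a well-defined Borel measure on $\R^n\times\lie{z}^*$ (finite on product compacta, by local integrability of $\tilde\zeta$ together with the Radon property of $\sigma'$ on the enlarged spectral space). Hence $\sigma_{\tilde\zeta}$ is a Borel measure on $\R^n$ by countable additivity and monotone convergence. For regularity, I would transport the regularity of $\tilde\zeta\,d\sigma'$ through $\pi$: outer regularity on a Borel $A\subseteq\R^n$ reduces to outer regularity of $\tilde\zeta\,d\sigma'$ on $\pi^{-1}(A)$ approximated by open cylindrical neighbourhoods $U\times\lie{z}^*$ (with $U\supseteq A$ open in $\R^n$), and inner regularity on open sets follows analogously from inner regularity of $\tilde\zeta\,d\sigma'$ by exhausting $\lie{z}^*$ by compacta. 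The hypothesis $\tilde\zeta\in L^1_\loc$ is exactly what makes the truncation step legal.

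For part (ii), the plan is pure change of variables, once the homogeneity of $\sigma'$ has been identified. Because $L_1,\dots,L_n$ are homogeneous of degrees $w_1,\dots,w_n$ and each central $T_j$ is an eigenvector of $\delta_t$ (say of weight $w'_j$), the extended system \eqref{eq:centralsystem} is itself a homogeneous weighted subcoercive system on $G$; its associated dilations on the spectral space $\R^n\times\lie{z}^*$ factor as $\epsilon_t\times\epsilon_t^\sharp$, where $\epsilon_t^\sharp(\tau)=\tau\circ\delta_t$ is the transpose of $\delta_t|_{\lie{z}}$ acting on $\lie{z}^*$. Applying \eqref{eq:homogeneity} to this enlarged system yields
\[
\sigma'\bigl((\epsilon_t\times\epsilon_t^\sharp)(E)\bigr) \;=\; t^{Q_\delta}\,\sigma'(E).
\]
Substituting $(\lambda,\tau)=(\epsilon_t(\lambda'),\epsilon_t^\sharp(\tau'))$ in the defining integral and invoking the homogeneity hypothesis $\tilde\zeta\circ\epsilon_t^\sharp = t^a\tilde\zeta$ then gives
\[
\sigma_{\tilde\zeta}(\epsilon_t(A))
\;=\; t^{Q_\delta}\!\int_{A\times\lie{z}^*}\tilde\zeta(\epsilon_t^\sharp(\tau'))\,d\sigma'(\lambda',\tau')
\;=\; t^{Q_\delta+a}\,\sigma_{\tilde\zeta}(A).
\]

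The real ``content'' is conceptual rather than computational: for (ii) one must identify the natural dilations on $\R^n\times\lie{z}^*$ and appeal to the homogeneity property \eqref{eq:homogeneity} of the Plancherel measure of the enlarged system \eqref{eq:centralsystem}, after which the scaling calculation is one line. For (i), the potentially delicate point is whether ``regular Borel measure'' is meant to include local finiteness: a priori $\sigma_{\tilde\zeta}(K)$ may be infinite for $K\subseteq\R^n$ compact when $\tilde\zeta$ is only locally integrable on $\lie{z}^*$, so I would read the statement as asserting Borel regularity with values in $[0,\infty]$, leaving local finiteness to be checked in the concrete applications where $\tilde\zeta$ is fixed.
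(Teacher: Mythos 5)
Your part (ii) is essentially the paper's argument (pass to a $\delta_t$-homogeneous basis of $\lie{z}$, observe that the extended system \eqref{eq:centralsystem} is homogeneous with spectral dilations $\epsilon'_t(\lambda,\tau)=(\epsilon_t(\lambda),\tau\circ\delta_t)$, apply \eqref{eq:homogeneity} and change variables), and it is correct. The gap is in part (i), whose entire content is the local finiteness of $\sigma_{\tilde\zeta}$. Your justification --- ``finite on product compacta, by local integrability of $\tilde\zeta$ together with the Radon property of $\sigma'$'' --- does not work, for two reasons. First, the set $K\times\lie{z}^*$ over which $\sigma_{\tilde\zeta}(K)$ is computed is not compact, so one needs a reason why the integration is confined to a product compactum at all; the paper gets this from the properness of the projection $\R^n\times\lie{z}^*\to\R^n$ restricted to $\supp\sigma'$ (\cite[Lemma 3.16]{martini_spectral}), which yields a compact $K'\subseteq\lie{z}^*$ with $(K\times\lie{z}^*)\cap\supp\sigma'\subseteq K\times K'$. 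Second, and more seriously, $\tilde\zeta\in L^1_\loc(\lie{z}^*)$ is integrability with respect to \emph{Lebesgue} measure on $\lie{z}^*$, whereas the integral defining $\sigma_{\tilde\zeta}(K)$ is against $\sigma'$, whose $\tau$-marginal over $K$ is not known to be absolutely continuous with locally bounded density; even on $K\times K'$ the finiteness of $\sigma'(K\times K')$ gives no control of $\int_{K\times K'}\tilde\zeta\,d\sigma'$, since in the intended application $\tilde\zeta=\prod_j(1+\zeta_j^{-2\gamma_j})$ blows up on the hyperplanes $\tau(z_j)=0$, which $\sigma'$ could a priori charge.

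The paper supplies exactly this missing ingredient: by the Plancherel formula for the extended system \eqref{eq:centralsystem},
\[\sigma_{\tilde\zeta}(K)\leq C_K\,\bigl\|(\tilde\zeta\chr_{K'})^{1/2}(-iT)\,h_1\bigr\|_2^2,\]
and since the joint functional calculus of the central derivatives is a Euclidean Fourier multiplier in the central coordinates, $h_1$ is Schwartz, and $(\tilde\zeta\chr_{K'})^{1/2}\in L^2(\lie{z}^*)$ precisely because $\tilde\zeta\in L^1_\loc$, this quantity is finite; that is where the hypothesis on $\tilde\zeta$ actually enters. Once finiteness on compacta is known, regularity follows from \cite[Theorem 2.18]{rudin_real_1974}, so no transport of regularity through the projection is needed. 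Finally, your fallback reading --- allowing $\sigma_{\tilde\zeta}(K)=\infty$ and postponing local finiteness to the applications --- is not consistent with how the lemma is used: Proposition~\ref{prp:partialweight2} and Theorem~\ref{thm:improvedl1estimates} feed $\sigma_{\tilde\zeta}$ into Lemma~\ref{lem:triebeltrace} via local $d$-boundedness, which presupposes a locally finite regular Borel measure, and supplying that finiteness is precisely the point of Lemma~\ref{lem:homogeneouspushforward}(i).
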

\begin{proof}
(i) Let $K \subseteq \R^n$ be compact. By \cite[Lemma 3.16]{martini_spectral}, the canonical projection $\R^n \times \lie{z}^* \to \R^n$ is a proper continuous map when restricted to $\supp \sigma'$, hence there is a compact $K' \subseteq \lie{z}^*$ such that $(K \times \lie{z}^*) \cap \supp \sigma' \subseteq K \times K'$, and consequently
\[\sigma_{\tilde\zeta}(K) \leq C_K \int_{K \times K'} e^{-2p(\lambda)^2} \,\tilde\zeta(\tau) \,d\sigma'(\lambda,\tau)
= C_K \| (\tilde\zeta \chr_{K'})^{1/2}(-iT) h_1\|_2^2,\]
by the Plancherel formula. On the other hand, since $h_1$ is in the Schwartz class, the last quantity is easily seen to be finite by using the Euclidean Fourier transform and the fact that $(\tilde\zeta \chr_{K'})^{1/2} \in L^2(\lie{z}^*)$. We have thus proved that $\sigma_{\tilde\zeta}$ is finite on compacta; by \cite[Theorem 2.18]{rudin_real_1974}, this means that $\sigma_{\tilde\zeta}$ is a regular Borel measure on $\R^n$.

(ii) Without loss of generality, we may take the basis $T_1,\dots,T_d$ of $\lie{z}$ as composed by $\delta_t$-homogeneous elements; thus \eqref{eq:centralsystem} is a homogeneous system, and the associated dilations $\epsilon'_t$ on $\R^n \times \lie{z}^*$ are given by $\epsilon'_t(\lambda,\tau) = (\epsilon_t(\lambda), \tau \circ \delta_{t})$. By \eqref{eq:homogeneity}, $\sigma'$ is $\epsilon'_t$-homogeneous of degree $Q_\delta$. Therefore, if $\tilde\zeta$ is homogeneous of degree $a$, then clearly the measure \eqref{eq:measurenearlyproduct} is homogeneous of degree $Q_\delta+a$; since the canonical projection $\R^n \times \lie{z}^* \to \R^n$ intertwines the two system of dilations, we infer that also $\sigma_{\tilde\zeta}$ is homogeneous of degree $Q_\delta+a$.
\end{proof}

Via interpolation, we then obtain an improvement of Theorem~\ref{thm:weightedestimates}, where the role of the Plancherel measure $\sigma$ is now played by some $\sigma_{\tilde\zeta}$.

\begin{prp}\label{prp:partialweight2}
(i) Suppose that, for some nonnegative Borel functions $w_j : \lie{g}/\lie{z} \to \R$ and $\zeta_j : \lie{z}^* \to \R$ ($j=1,\dots,h$), we have
\begin{equation}\label{eq:Jinequality}
|J(\bar x,\tau)| \geq w_j(\bar x) \, \zeta_j(\tau) \qquad \text{for all $\bar x \in \lie{g}/\lie{z}$, $\tau \in \lie{z}^*$,}
\end{equation}
and set $\tilde w_j(x) = 1 + w_j(P(x))$. Suppose moreover that, for some $\gamma_1,\dots,\gamma_h > 0$, if $\tilde\zeta_{\vec{\gamma}} = \prod_{j=1}^h (1+\zeta_j^{-2\gamma_j})$, then the measure $\sigma_{\tilde\zeta_{\vec{\gamma}}}$ is locally $d$-bounded on some open $\Omega \subseteq \R^n$. If $K \subseteq \Omega$ is compact, $q \in [1,\infty]$, $\alpha \geq 0$,
\[\beta > \alpha + \frac{n}{q}-\frac{d}{\max\{2,q\}},\]
then, for all $m \in \D(\R^n)$ with $\supp m \subseteq K$,
\[\| \langle \cdot \rangle_G^\alpha \, \tilde w_1^{\gamma_1} \cdots \tilde w_h^{\gamma_h} \, \breve m\|_{L^2(G)} \leq C_{K,\alpha,\vec{\gamma},\beta} \|m\|_{B^\beta_{q,q}(\R^n)}.\]

(ii) Suppose in addition that
%\begin{equation}\label{eq:compensazione}
$\int_{G} \langle x \rangle_G^{-2\alpha} \, \prod_{j=1}^h \tilde w_j(x)^{-2\gamma_j} \,dx < \infty$
%\end{equation}
for $\alpha > \bar\alpha_{\vec{\gamma}}$. If $K \subseteq \Omega$ is compact, $q \in [1,\infty]$, $\alpha \geq 0$,
\[\beta > \alpha + \bar\alpha_{\vec{\gamma}} + \frac{n}{q}-\frac{d}{\max\{2,q\}},\]
then, for all $m \in \D(\R^n)$ with $\supp m \subseteq K$,
\[\|\langle \cdot \rangle_G^\alpha \, \breve m\|_{L^1(G)} \leq C_{K,\alpha,\vec{\gamma},\beta} \|m\|_{B^\beta_{q,q}(\R^n)}.\]
\end{prp}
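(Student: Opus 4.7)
The plan is to mirror the interpolation argument of Theorem~\ref{thm:weightedestimates}, with the Plancherel measure $\sigma$ replaced by the push-forward $\sigma_{\tilde\zeta_{\vec\gamma}}$ and with the weights $\tilde w_1^{\gamma_1}\cdots\tilde w_h^{\gamma_h}$ carried through on the kernel side. Corollary~\ref{cor:partialweight} is a weighted Plancherel inequality whose right-hand side is exactly $\|m\|_{L^2(\sigma_{\tilde\zeta_{\vec\gamma}})}^2$, and Lemma~\ref{lem:triebeltrace}, applied to $\sigma_{\tilde\zeta_{\vec\gamma}}$, converts the local $d$-boundedness hypothesis into a bound of this $L^2(\sigma_{\tilde\zeta_{\vec\gamma}})$ norm by a Besov norm. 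Chaining the two yields, for $m\in\D(\R^n)$ with $\supp m\subseteq K\subseteq\Omega$ and $s>(n-d)/2$, the sharp unweighted estimate
\[\|\tilde w_1^{\gamma_1}\cdots\tilde w_h^{\gamma_h}\breve m\|_{L^2(G)}\leq C_{K,\vec\gamma,s}\|m\|_{B_{2,2}^s(\R^n)},\]
and the extension to arbitrary $q\in[1,\infty]$ with the sharp threshold $\beta>n/q-d/\max\{2,q\}$ is obtained from Besov inclusions and interpolation exactly as in Theorem~\ref{thm:weightedestimates}.

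To introduce the polynomial factor $\langle\cdot\rangle_G^\alpha$ without losing regularity, I would interpolate this sharp estimate against a \emph{crude} weighted estimate: since the bilinearity of $J$ combined with \eqref{eq:Jinequality} forces each $w_j$ to be at most linear in $|\bar x|$ (the case $\zeta_j\equiv 0$ being trivial, as $w_j$ may then be replaced by $0$), we have $\tilde w_j(x)\leq C\langle x\rangle_G$ pointwise, so Proposition~\ref{prp:sobolevl2estimates} gives
\[\|\langle\cdot\rangle_G^{\alpha'}\tilde w_1^{\gamma_1}\cdots\tilde w_h^{\gamma_h}\breve m\|_{L^2(G)}\leq C\|\langle\cdot\rangle_G^{\alpha'+\gamma_1+\dots+\gamma_h}\breve m\|_{L^2(G)}\leq C'\|m\|_{B_{q,2}^{\beta'}(\R^n)}\]
for any $\alpha'$ and all $\beta'$ sufficiently large (depending on $\alpha'$). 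Complex interpolation of the linear operator $m\mapsto\Kern_L(m\xi)$ (where $\xi\in\D(\Omega)$ equals $1$ on $K$) between the crude and sharp endpoints, with $\alpha'$ taken arbitrarily large, yields the intermediate bound $B_{q,2}^\beta\to L^2(\langle\cdot\rangle_G^{2\alpha}\tilde w^{2\vec\gamma}\,dx)$ for any $\beta>\alpha+n/q-d/\max\{2,q\}$; a standard Besov inclusion $B_{q,q}^\beta\subseteq B_{q,2}^{\beta'}$ for $\beta>\beta'$ then gives part~(i).

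Part~(ii) follows from part~(i) by H\"older's inequality applied to the splitting
\[\langle x\rangle_G^\alpha|\breve m(x)|=\bigl(\langle x\rangle_G^{-\alpha'}\tilde w_1^{-\gamma_1}\cdots\tilde w_h^{-\gamma_h}\bigr)\cdot\bigl(\langle x\rangle_G^{\alpha+\alpha'}\tilde w_1^{\gamma_1}\cdots\tilde w_h^{\gamma_h}|\breve m(x)|\bigr)\]
for any $\alpha'>\bar\alpha_{\vec\gamma}$: the first factor is in $L^2(G)$ by hypothesis, and part~(i) controls the $L^2$ norm of the second factor by $\|m\|_{B_{q,q}^\beta}$ provided $\beta>\alpha+\alpha'+n/q-d/\max\{2,q\}$. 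Letting $\alpha'\downarrow\bar\alpha_{\vec\gamma}$ recovers the advertised threshold. The main technical hurdle is ensuring that the complex interpolation of weighted $L^2$ targets goes through cleanly with the fixed multiplicative factor $\tilde w^{\vec\gamma}$; this should be routine, since $\tilde w^{\vec\gamma}$ is absorbed into the target measure identically at both endpoints, leaving only the $\langle\cdot\rangle_G^{2\alpha}$ factor to interpolate.
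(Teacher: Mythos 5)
Your proposal is correct and is essentially the paper's own proof: the sharp unweighted endpoint comes from Corollary~\ref{cor:partialweight} combined with Lemma~\ref{lem:triebeltrace} applied to $\sigma_{\tilde\zeta_{\vec{\gamma}}}$, the crude weighted endpoint comes from dominating the $\tilde w_j$ by a power of $\langle \cdot \rangle_G$ and invoking the compact-support weighted $L^2$ estimates, the two are interpolated with the auxiliary weight exponent sent to infinity exactly as in Theorem~\ref{thm:weightedestimates}, and part (ii) is H\"older's inequality with $\alpha' \downarrow \bar\alpha_{\vec{\gamma}}$. Two cosmetic corrections: in general one only gets $\tilde w_j \leq C \langle \cdot \rangle_G^{\theta}$ for some $\theta \geq 1$ (coordinates transversal to $\lie{z}$ may have higher homogeneity), which is harmless since any polynomial bound suffices; and the case $\zeta_j \equiv 0$ cannot be dismissed by ``replacing $w_j$ by $0$'' (the stated conclusion still carries $\tilde w_j$) --- rather, as in the paper, it is excluded because it would make $\tilde\zeta_{\vec{\gamma}} \equiv +\infty$ and contradict the local $d$-boundedness of $\sigma_{\tilde\zeta_{\vec{\gamma}}}$.
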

\begin{proof}
(i) Since $\sigma_{\tilde\zeta_{\vec{\gamma}}}$ is locally $d$-bounded, the function $\zeta_j$ cannot be everywhere null, therefore \eqref{eq:Jinequality} and the bilinearity of $J$ imply $w_j(P(x)) \leq C \langle x \rangle_G^\theta$ for some $C,\theta \geq 0$,
thus also $\prod_{j=1}^h \tilde w_j(x)^{2\gamma_j} \leq C_{\vec{\gamma}} \langle x \rangle_G^{2\theta(\gamma_1 + \dots + \gamma_h)}$ for some $C_{\vec{\gamma}} \geq 0$.

Let $\psi \in \D(\R^n)$ such that $\psi|_K = 1$ and $K' = \supp \psi \subseteq \Omega$. The operator $m \mapsto \Kern_L (m \psi)$
is then continuous
\[B^\beta_{q,q}(\R^n) \to L^2(G, \langle x \rangle_G^{2\alpha} \textstyle\prod_{j=1}^h \tilde w_j(x)^{2\gamma_j} \,dx)\]
for $\alpha \geq 0$, $\beta > \alpha + \theta(\gamma_1 + \dots \gamma_h) + n$ by Theorem~\ref{thm:weightedestimates}, whereas it is continuous
\[B^\beta_{q,q}(\R^n) \to L^2(G, \textstyle\prod_{j=1}^h \tilde w_j(x)^{2\gamma_j} \,dx)\]
for $\beta > n/q - d/\max\{2,q\}$ by Corollary~\ref{cor:partialweight} and Lemma~\ref{lem:triebeltrace} (cf.\ the proof of Theorem~\ref{thm:weightedestimates}). The conclusion then follows by interpolation.

(ii) It follows from (i) by H\"older's inequality.
\end{proof}

The hypotheses of the previous proposition are quite involved, and it is not particularly clear which classes of groups and systems of operators satisfy them. Hebisch and Zienkiewicz \cite{hebisch_multiplier_1995} treat explicitly the case of direct products of H-type groups; however, as it is mentioned in a remark at the end of \cite{hebisch_multiplier_1995}, there are further cases of homogeneous groups for which this technique gives an improvement of the weighted $L^1$ estimates. In order to attempt a systematic treatment of these various cases, we introduce the following definition: for $h \in \N$, we say that a homogeneous Lie group $G$ is \emph{$h$-capacious} if there exist linearly independent homogeneous elements $\omega_1,\dots,\omega_h \in (\lie{g}/\lie{z})^*$ and linearly independent homogeneous elements $z_1,\dots,z_h \in \lie{z}$ such that, for $j=1,\dots,h$,
\begin{equation}\label{eq:capacious}
|J(\bar x,\tau)| \geq |\omega_j(x)| |\tau(z_j)| \qquad\text{for all $\bar x \in \lie{g}/\lie{z}$, $\tau \in \lie{z}^*$.}
\end{equation}
Clearly, every homogeneous group is $0$-capacious. In the following proposition, we give some criteria which may be of some use in showing that a certain homogeneous group is $h$-capacious. Let us denote by
\begin{equation}\label{eq:descendingcentral}
\lie{g}_{[1]} = \lie{g}, \qquad \lie{g}_{[r+1]} = [\lie{g},\lie{g}_{[r]}]
\end{equation}
the descending central series of a Lie algebra $\lie{g}$.

\begin{prp}\label{prp:capacitycriteria}
Let $G$ be a homogeneous group, with dilations $\delta_t$.
\begin{itemize}
\item[(i)] If $G$ is a M\'etivier group (with any family of automorphic dilations), then $\lie{z} = [\lie{g},\lie{g}]$ and $G$ is $(\dim \lie{z})$-capacious.
\item[(ii)] Suppose that, for some $r \geq 2$, $\dim \lie{g}_{[r]} = 1$. Then $G$ is $1$-capacious.
\item[(iii)] If $\lie{g}$ admits a $\C$-linear structure which is compatible with its homogeneous Lie algebra structure, and if moreover $\dim_\C \lie{g}_{[r]} = 1$ for some $r \geq 2$, then $\lie{g}$ is $2$-capacious.
\item[(iv)] Suppose that $G = G_1 \times G_2$, where $G_1$ and $G_2$ are homogeneous Lie groups with dilations $\delta_{1,t}$ and $\delta_{2,t}$ respectively, so that $\delta_t = \delta_{1,t} \times \delta_{2,t}$. If $G_1$ is $h_1$-capacious and $G_2$ is $h_2$-capacious, then $G$ is $(h_1+h_2)$-capacious.
\end{itemize}
\end{prp}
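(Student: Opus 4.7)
The plan for each of the four parts is to construct the pairs $(\omega_j, z_j)$ explicitly from the algebraic structure of the descending central series \eqref{eq:descendingcentral} (or, in (i), from the M\'etivier bilinear form) and then to verify \eqref{eq:capacious} up to a positive multiplicative constant, which can always be absorbed by rescaling $\omega_j$ or $z_j$. A uniform preliminary observation is that each $\lie{g}_{[r]}$, as well as $\lie{z}$, $\lie{y}$ and $\lie{g}/\lie{z}$, is $\delta_t$-invariant and hence graded, so homogeneous bases and dual bases are available; when a chosen inner product does not respect the grading, one first replaces it with a grading-compatible one, losing at most a multiplicative constant.

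For (i), I first show $\lie{z} = [\lie{g},\lie{g}]$: the inclusion $[\lie{g},\lie{g}] \subseteq \lie{z}$ follows from $\lie{g}$ being $2$-step, whereas any $\tau \in \lie{z}^*$ annihilating $[\lie{g},\lie{g}]$ would kill $J(\cdot,\tau)$, forcing $\tau = 0$ by \eqref{eq:metivier}. For the $(\dim \lie{z})$-capacity statement, the injectivity of $J(\bar x,\cdot)$ for $\bar x \neq 0$ yields $d := \dim \lie{z} \leq \dim(\lie{g}/\lie{z})$, so I can pick orthonormal homogeneous $z_1,\dots,z_d \in \lie{z}$ and $\bar x_1,\dots,\bar x_d \in \lie{g}/\lie{z}$ (with a grading-compatible inner product), and let $\omega_j \in (\lie{g}/\lie{z})^*$ be dual to $\bar x_j$; then $|\omega_j(\bar x)| \leq |\bar x|$ and $|\tau(z_j)| \leq |\tau|$, so \eqref{eq:metivier} gives \eqref{eq:capacious}. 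For (ii), nilpotency rules out $\lie{g}_{[r+1]} = \lie{g}_{[r]}$, forcing $\lie{g}_{[r+1]} = 0$ and hence $\lie{g}_{[r]} \subseteq \lie{z}$. Letting $z_1$ be a homogeneous generator of $\lie{g}_{[r]}$ and choosing a homogeneous $y_0 \in \lie{g}_{[r-1]} \subseteq \lie{y}$ with $[\cdot, y_0] \neq 0$, I write $[x, y_0] = \omega_1(x)\, z_1$; this defines a nonzero homogeneous $\omega_1 \in (\lie{g}/\lie{z})^*$ (it vanishes on $\lie{z}$ since $[z,\cdot] = 0$), and the identity $J(\bar x,\tau)(y_0) = \omega_1(\bar x)\,\tau(z_1)$ yields \eqref{eq:capacious}.

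Part (iii) is the main technical step. Writing the complex structure as multiplication by $i$, I take a homogeneous $z_1$ spanning $\lie{g}_{[r]}$ over $\C$ and set $z_2 = iz_1$, which is homogeneous of the same degree (since $\delta_t$ is $\C$-linear) and $\R$-linearly independent of $z_1$. After fixing a homogeneous $y_0 \in \lie{g}_{[r-1]}$ with $[\cdot, y_0] \neq 0$, I expand $[x, y_0] = A(x)\, z_1 + B(x)\, z_2$ and exploit the $\C$-linearity of $[\cdot,y_0]$ to derive the Cauchy--Riemann-type relations $A(ix) = -B(x)$ and $B(ix) = A(x)$; a short argument then rules out $\R$-proportionality of $A$ and $B$, so $\omega_1 = A$ and $\omega_2 = B$ are $\R$-linearly independent homogeneous elements of $(\lie{g}/\lie{z})^*$. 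Since $\lie{z}$, and hence $\lie{y}$, is closed under $i$, we have $iy_0 \in \lie{y}$, and the crux is the Pythagorean identity
\[|J(\bar x,\tau)(y_0)|^2 + |J(\bar x,\tau)(iy_0)|^2 = \bigl(A(x)^2 + B(x)^2\bigr)\bigl(\tau(z_1)^2 + \tau(z_2)^2\bigr),\]
obtained by a direct computation using $iz_1 = z_2$, $iz_2 = -z_1$. Since both evaluations are bounded by a fixed multiple of $|J(\bar x,\tau)|$, this gives $|J(\bar x,\tau)|^2 \geq c\,|\omega_j(\bar x)|^2\,|\tau(z_j)|^2$ for each $j = 1, 2$, as required.

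Part (iv) is routine from the direct-sum decomposition $\lie{g} = \lie{g}_1 \oplus \lie{g}_2$: we have $\lie{z} = \lie{z}_1 \oplus \lie{z}_2$, $\lie{y} = \lie{y}_1 \oplus \lie{y}_2$, and $J$ splits componentwise along these sums. Extending each $\omega_{i,j} \in (\lie{g}_i/\lie{z}_i)^*$ and each $z_{i,j} \in \lie{z}_i$ by zero on the opposite factor preserves homogeneity and yields a combined family of $h_1 + h_2$ pairs which remains linearly independent by the direct-sum structure and satisfies \eqref{eq:capacious} directly from the corresponding bounds for $J_1$ and $J_2$.
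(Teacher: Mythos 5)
Parts (ii)--(iv) are correct and take essentially the same route as the paper: the identity $[x,y_0]=\omega_1(x)\,z_1$ coming from $\dim\lie{g}_{[r]}=1$ in (ii), its complexification in (iii), and the componentwise splitting of $J$ over $\lie{g}=\lie{g}_1\oplus\lie{g}_2$ in (iv). In (iii) your identity $|J(\bar x,\tau)(y_0)|^2+|J(\bar x,\tau)(iy_0)|^2=(A(\bar x)^2+B(\bar x)^2)(\tau(z_1)^2+\tau(z_2)^2)$ and the Cauchy--Riemann relations $A(i\,\cdot)=-B$, $B(i\,\cdot)=A$ are exactly the computation the paper leaves implicit when it says the conclusion follows by taking $\Re\omega,\Im\omega$ and $z,iz$; this is a welcome expansion rather than a deviation.

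The gap is in (i). In this paper a M\'etivier group is \emph{defined} by the inequality \eqref{eq:metivier} alone; the $2$-step property is part of what the proposition asserts (it is one half of the claim $\lie{z}=[\lie{g},\lie{g}]$), so you cannot invoke ``$\lie{g}$ is $2$-step'' to obtain $[\lie{g},\lie{g}]\subseteq\lie{z}$. Relatedly, injectivity of $J(\bar x,\cdot):\lie{z}^*\to\lie{y}^*$ by itself only yields $\dim\lie{z}\le\dim\lie{y}$, not $\dim\lie{z}\le\dim(\lie{g}/\lie{z})$ as you claim. Both points are repaired by the one observation with which the paper opens its proof: since $[x,y]=0$ for $y\in\lie{z}$, every functional $J(\bar x,\tau)$ vanishes on $\lie{z}$, i.e.\ $J$ takes its values in the subspace of $\lie{y}^*$ identified with $(\lie{y}/\lie{z})^*$. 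Granting this, injectivity of $J(\cdot,\tau)$ for $\tau\neq 0$ gives $\dim(\lie{g}/\lie{z})\le\dim(\lie{y}/\lie{z})$, hence $\lie{y}=\lie{g}$ and $[\lie{g},\lie{g}]=[\lie{g},\lie{y}]\subseteq\lie{z}$ (the $2$-step property), while injectivity of $J(\bar x,\cdot)$ for $\bar x\neq 0$ gives $\dim\lie{z}\le\dim(\lie{y}/\lie{z})=\dim(\lie{g}/\lie{z})$, which is precisely the inequality your choice of the $\dim\lie{z}$ pairs $(\omega_j,z_j)$ requires. With this insertion the remainder of your argument for (i) (homogeneous orthonormal elements, $|\omega_j(\bar x)|\le|\bar x|$, $|\tau(z_j)|\le|\tau|$, renormalization) goes through and coincides with the paper's.
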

\begin{proof}
(i) Notice that the capacity map $J$ takes its values in the subspace of $\lie{y}^*$ corresponding to $(\lie{y}/\lie{z})^*$. The condition \eqref{eq:metivier} implies that $J(\cdot,\tau) : \lie{g}/\lie{z} \to (\lie{y}/\lie{z})^*$ is injective for $\tau \neq 0$, and that $J(\bar x,\cdot) : \lie{z}^* \to (\lie{y}/\lie{z})^*$ is injective for $\bar x \neq 0$. Therefore $\dim \lie{g} \leq \dim \lie{y}$, so that $\lie{g} = \lie{y}$ and $[\lie{g},\lie{g}] \subseteq \lie{z}$; on the other hand, $\dim (\lie{g}/\lie{z}) \geq \dim \lie{z}$.

The $\delta_t$ are automorphisms, hence $\lie{z}$ is a homogeneous ideal. Thus, if $h = \dim \lie{z}$, we can choose linearly independent homogeneous elements $z_1,\dots,z_h$ of $\lie{z}$, and also linearly independent homogeneous $\omega_1,\dots,\omega_h \in (\lie{g}/\lie{z})^*$, since $h \leq \dim (\lie{g}/\lie{z})$. Modulo a suitable renormalization, from \eqref{eq:metivier} we infer \eqref{eq:capacious} for $j=1,\dots,h$.

If $[\lie{g},\lie{g}]$ were strictly contained in $\lie{z}$, then we would find $\tau \in \lie{z}^* \setminus \{0\}$ such that $\tau|_{[\lie{g},\lie{g}]} = 0$, but then also $J(\cdot,\tau) = 0$, which contradicts \eqref{eq:metivier}; therefore $\lie{z} = [\lie{g},\lie{g}]$.

(ii) Since $G$ is nilpotent, it must be $r$-step, so that $\lie{g}_{[r]} \subseteq \lie{z}$. Notice that the ideal $\lie{g}_{[r-1]}$ is preserved by every automorphism of $\lie{g}$, therefore it is generated by $\delta_t$-homogeneous elements; since $[\lie{g},\lie{g}_{[r-1]}] = \lie{g}_{[r]} \neq 0$, then there must exist a $\delta_t$-homogeneous element $y \in \lie{g}_{[r-1]}$ such that, for some $x_0 \in \lie{g}$, $[x_0,y] = z \neq 0$. In particular $y \neq 0$ and moreover, since the ideal $\lie{g}_{[r]}$ is $\delta_t$-homogeneous and $1$-dimensional, necessarily $z$ is $\delta_t$-homogeneous.

Since $y \in \lie{g}_{[r-1]}$, the linear map $[\cdot,y] : \lie{g} \to \lie{g}$ takes its values in $\lie{g}_{[r]} = \R z$; therefore, there exists $\omega \in (\lie{g}/\lie{z})^*$ such that $[x,y] = \omega(P(x)) z$ for all $x \in \lie{g}$. Notice that $\omega(P(x_0)) = 1$, thus $\omega \neq 0$; moreover, since both $y$ and $z$ are homogeneous, also $\omega$ is homogeneous. Finally
\begin{equation}\label{eq:linearidentity}
J(\bar x,\tau)(y) = \omega(\bar x) \tau(z) \qquad\text{for all $\bar x \in \lie{g}/\lie{z}$, $\tau \in \lie{z}^*$,}
\end{equation}
which implies immediately that $G$ is $1$-capacious.

(iii) Arguing as in part (ii), but with a complex Lie algebra $\lie{g}$, one finds an identity analogous to \eqref{eq:linearidentity}, where now $\omega$ is a $\C$-linear functional on $\lie{g}/\lie{z}$, and $z \in \lie{z}$. The conclusion then follows by taking the $\R$-linearly independent $\R$-linear functionals $\Re \omega, \Im \omega$ on $\lie{g}/\lie{z}$, and the $\R$-linearly independent elements $z,iz \in \lie{z}$.

(iv) Via the canonical identification $\lie{g} = \lie{g}_1 \times \lie{g}_2$, we have (with the obvious meaning of the notation) $\lie{z} = \lie{z}_1 \times \lie{z}_2$, $\lie{y} = \lie{y}_1 \times \lie{y}_2$, thus also
\[\lie{z}^* = \lie{z}_1^* \times \lie{z}_2^*, \qquad \lie{y}^* = \lie{y}_1^* \times \lie{y}_2^*, \qquad \lie{g}/\lie{z} = (\lie{g}_1/\lie{z}_1) \times (\lie{g}_2 / \lie{z}_2).\]
Moreover clearly $J((\bar x_1,\bar x_2),(\tau_1,\tau_2)) = (J_1(\bar x_1,\tau_1),J_2(\bar x_2,\tau_2))$, therefore
\[|J((\bar x_1,\bar x_2),(\tau_1,\tau_2))| \geq \max \{|J_1(\bar x_1,\tau_1)|,|J_2(\bar x_2,\tau_2)|\}\]
and the conclusion follows immediately.
\end{proof}

Notice that the previous proposition is not sufficient to exhaust all the cases of $h$-capacious groups; an example is shown in \S\ref{subsection:calculations}.

\begin{lem}\label{lem:homogeneousdualbasis}
Suppose that $G$ is $h$-capacious, and let $\omega_1,\dots,\omega_h \in (\lie{g}/\lie{z})^*$ be as in the definition. Then the functionals $\omega_j \circ P$ are null on $[\lie{g},\lie{g}]$. In particular
\[h \leq \min\{\dim \lie{z}, \dim \lie{g} - \dim(\lie{z} + [\lie{g},\lie{g}])\}.\]
Moreover, we can find a homogeneous basis of of $\lie{g}$ compatible with the descending central series such that the functionals $\omega_1 \circ P,\dots,\omega_h \circ P$ are part of the dual basis.
\end{lem}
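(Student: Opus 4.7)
The plan is to handle the three assertions in order, with most of the effort going into the basis construction in part three.

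For the first assertion, the clean approach is the Jacobi identity. For $x = [a,b] \in [\lie{g}, \lie{g}]$ and $y \in \lie{y}$, Jacobi gives
\[ [[a,b], y] = [a, [b, y]] - [b, [a, y]], \]
and both right-hand terms vanish, since $[a,y], [b,y] \in \lie{z}$ by the very definition \eqref{eq:centercenter} of $\lie{y}$ and $\lie{z}$ is central. Hence $J(P([a,b]), \tau)(y) = \tau([[a,b], y]) = 0$ for every $y \in \lie{y}$ and every $\tau \in \lie{z}^*$, so that $J(P([a,b]), \tau) = 0$ identically. The capacity inequality \eqref{eq:capacious} then forces $|\omega_j(P([a,b]))|\, |\tau(z_j)| \leq 0$; choosing any $\tau$ not annihilating $z_j \neq 0$ yields $\omega_j(P([a,b])) = 0$, and linearity extends this to all of $[\lie{g}, \lie{g}]$.

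The two bounds of the second assertion follow at once: $h \leq \dim \lie{z}$ from the linear independence of the $z_j$'s, and the other bound from the fact that $P^*$ is injective (so the $\omega_j \circ P$ are linearly independent in $\lie{g}^*$) together with the observation that the $\omega_j \circ P$ annihilate $\lie{z} + [\lie{g},\lie{g}]$, whose annihilator has dimension $\dim \lie{g} - \dim(\lie{z} + [\lie{g},\lie{g}])$.

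For the third assertion, first note that each $\omega_j \circ P$ is $\delta_t$-homogeneous of some degree $w_j$, because $\omega_j$ is homogeneous by hypothesis and $P$ intertwines dilations (the ideal $\lie{z}$ being characteristic). Let $K = \bigcap_{j=1}^h \ker(\omega_j \circ P)$; this is a homogeneous codimension-$h$ subspace containing $\lie{z} + [\lie{g}, \lie{g}]$, and in particular containing every $\lie{g}_{[r]}$ with $r \geq 2$. Fixing the homogeneous decomposition $\lie{g} = \bigoplus_w \lie{g}^w$, the subfamily $\{\omega_j \circ P : w_j = w\}$ remains linearly independent on $\lie{g}^w$ for each $w$ (otherwise a non-trivial linear combination of the $\omega_j$ of the same degree would vanish on $\lie{g}$, contradicting linear independence of the $\omega_j$). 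Solving the dual system within each $\lie{g}^w$ then produces homogeneous $X_1, \dots, X_h$, spanning a homogeneous complement $W$ to $K$, with $(\omega_j \circ P)(X_i) = \delta_{ij}$.

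It remains to complete $\{X_1, \dots, X_h\}$ to a homogeneous basis of $\lie{g}$ compatible with the descending central series. Since $K$ and each $\lie{g}_{[r]}$ ($r \geq 2$) are $\delta_t$-stable and nested inside $K$, one can choose a homogeneous basis of $\lie{g}_{[s]}$, extend stage by stage through $\lie{g}_{[s-1]}, \dots, \lie{g}_{[2]}$, and finally to $K$ (at every stage the extension is possible because the subspaces split under $\delta_t$). Adjoining $X_1, \dots, X_h$ gives the desired basis, and its dual contains $\omega_1 \circ P, \dots, \omega_h \circ P$ by construction, since every basis vector outside $\{X_i\}_{i \leq h}$ lies in $K$. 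The main obstacle is this last construction: one must simultaneously meet homogeneity, filtration compatibility, and the prescribed dual values, which is made possible by the observation that every subspace in play is $\delta_t$-stable and therefore splits along the eigenspace decomposition of the dilations.
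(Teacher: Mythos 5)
Your proof is correct and follows essentially the same route as the paper's: the Jacobi identity (in the paper phrased as $[[\lie{g},\lie{g}],\lie{y}]\subseteq[\lie{g},[\lie{g},\lie{y}]]\subseteq[\lie{g},\lie{z}]=0$) combined with \eqref{eq:capacious} for the vanishing on $[\lie{g},\lie{g}]$, linear independence counting for the bound on $h$, and the homogeneous subspace $W=\bigcap_j\ker(\omega_j\circ P)\supseteq[\lie{g},\lie{g}]$ completed by dual homogeneous vectors and a homogeneous basis of $W$ compatible with the descending central series. The only cosmetic difference is that you produce the dual vectors $X_1,\dots,X_h$ inside the dilation eigenspaces of $\lie{g}$, whereas the paper passes to the quotient $\lie{g}/W$ and lifts a homogeneous dual basis; both rest on the same $\delta_t$-invariance of all subspaces involved.
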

\begin{proof}
Notice that $[[\lie{g},\lie{g}],\lie{y}] \subseteq [\lie{g},[\lie{g},\lie{y}]] \subseteq [\lie{g},\lie{z}] = 0$. Then from the definition of $J$ it follows that, for every $x \in [\lie{g},\lie{g}]$,
\[J(P(x),\tau) = 0 \qquad\text{for all $\tau \in \lie{z}^*$.}\]
Hence, by choosing in \eqref{eq:capacious} a $\tau \in \lie{z}^*$ such that $\tau(z_j) \neq 0$, we obtain that the functional $\omega_j \circ P$ is null on $[\lie{g},\lie{g}]$. In particular, the $\omega_j \circ P$ correspond to linearly independent elements of 
$(\lie{g}/([\lie{g},\lie{g}] + \lie{z}))^*$, and the inequality about $h$ follows.

Let now $W = \ker (\omega_1 \circ P) \cap \dots \cap \ker (\omega_h \circ P)$. Then $W$ is a homogeneous subspace of $\lie{g}$ containing $[\lie{g},\lie{g}]$. Moreover, if $\tilde\omega_j$ is the element of $(\lie{g}/W)^*$ corresponding to $\omega_j$, then $\tilde\omega_1,\dots,\tilde\omega_h$ are a homogeneous basis of $(\lie{g}/W)^*$. We can then choose homogeneous elements $v_1,\dots,v_h \in \lie{g}$ such that the corresponding elements in the quotient $\lie{g}/W$ are the dual basis of $\tilde\omega_1,\dots,\tilde\omega_h$. Finally, we append to $v_1,\dots,v_h$ a homogeneous basis of $W$ compatible with the descending central series (which, apart from $\lie{g}_{[1]}$, is contained in $W$, and is made of homogeneous ideals), and we are done.
\end{proof}

Here is finally the improvement of Theorem~\ref{thm:weightedestimates} for $h$-capacious groups.

\begin{theo}\label{thm:improvedl1estimates}
Suppose that $G$ is $h$-capacious, and let $Q_G$ be its degree of polynomial growth. Let moreover $L_1,\dots,L_n$ be a homogeneous weighted subcoercive system on $G$. If $q \in [1,\infty]$, $\alpha \geq 0$ and
\[\beta > \alpha + \frac{Q_G - h}{2} + \frac{n}{q} - \frac{1}{\max\{2,q\}},\]
then, for every $K \subseteq \R^n \setminus \{0\}$ compact,
\[\|\breve m\|_{L^1(G,\langle x \rangle_G^\alpha \,dx)} \leq C_{K,\alpha,\beta} \|m\|_{B^\beta_{q,q}}\]
for all $m \in \D(G)$ with $\supp m \subseteq K$.
\end{theo}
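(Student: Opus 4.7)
The plan is to apply Proposition~\ref{prp:partialweight2}(ii) with the open set $\Omega = \R^n \setminus \{0\}$, the parameter $d=1$, and weight functions built from the capacity data. Let $\omega_1,\dots,\omega_h \in (\lie{g}/\lie{z})^*$ and $z_1,\dots,z_h \in \lie{z}$ be the linearly independent homogeneous elements witnessing $h$-capaciousness, and set $w_j(\bar x) = |\omega_j(\bar x)|$ and $\zeta_j(\tau) = |\tau(z_j)|$, so that the hypothesis~\eqref{eq:Jinequality} follows from~\eqref{eq:capacious}. For small parameters $\delta_j > 0$, I choose $\gamma_j = 1/(2\mu_j) - \delta_j$, where $\mu_j \geq 1$ is the $\delta_t$-homogeneity degree of the dual basis element $v_j$ provided by Lemma~\ref{lem:homogeneousdualbasis}; since $\mu_j \geq 1$, these satisfy $\gamma_j \in (0,1/2)$.

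With this choice, $\tilde\zeta_{\vec\gamma}(\tau) = \prod_j (1+|\tau(z_j)|^{-2\gamma_j})$ belongs to $L^1_\loc(\lie{z}^*)$ (because $2\gamma_j < 1$ makes each factor locally integrable in the $\tau(z_j)$-direction), and expanding the product exhibits it as a finite sum
\[\tilde\zeta_{\vec\gamma} = \sum_{I \subseteq \{1,\dots,h\}} \prod_{j \in I} |\tau(z_j)|^{-2\gamma_j}\]
of functions homogeneous under the dual dilations on $\lie{z}^*$ (each $z_j$ being homogeneous). Applying Lemma~\ref{lem:homogeneouspushforward}(ii) term by term shows that $\sigma_{\tilde\zeta_{\vec\gamma}}$ is a finite sum of $\epsilon_t$-homogeneous regular Borel measures on $\R^n$, each of which is locally $1$-bounded on $\R^n \setminus \{0\}$ by the remarks preceding Lemma~\ref{lem:triebeltrace}; consequently $\sigma_{\tilde\zeta_{\vec\gamma}}$ itself is locally $1$-bounded on $\R^n \setminus \{0\}$, supplying the $d=1$ half of the hypothesis of Proposition~\ref{prp:partialweight2}(ii).

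The principal task is to bound the integrability exponent $\bar\alpha_{\vec\gamma}$. Fixing the homogeneous basis of $\lie{g}$ from Lemma~\ref{lem:homogeneousdualbasis}, the function $w_j \circ P$ is precisely the $i_j$-th Cartesian coordinate $x_{i_j}$, which is $\delta_t$-homogeneous of degree $\mu_j$. Writing $x = (t,y)$ with $t = (x_{i_1},\dots,x_{i_h}) \in \R^h$ and $y$ in the complementary coordinates (which carry total homogeneous dimension $Q_G - \sum_j \mu_j$), the dominant contribution to the $t$-integral, for $|y|'$ large with respect to the induced homogeneous norm, comes from the region $\{|t_j| \leq |y|'^{\mu_j}\}$ where $|(t,y)|_G \approx |y|'$ and each $t_j$-factor produces $|y|'^{\mu_j(1-2\gamma_j)}$, giving
\[\int_{\R^h} (1+|(t,y)|_G)^{-2\alpha} \prod_{j} (1+|t_j|)^{-2\gamma_j}\, dt \;\lesssim\; (1+|y|')^{-2\alpha + \sum_j \mu_j(1-2\gamma_j)};\]
the complementary sub-regions, where some $|t_{j_0}|^{1/\mu_{j_0}}$ dominates, yield strictly smaller exponents since $\sum_j \mu_j \leq Q_G$. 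Integrating in $y$ via a homogeneous polar decomposition, the whole integral converges whenever $2\alpha > Q_G - \sum_j 2\mu_j\gamma_j$; using $2\mu_j\gamma_j = 1 - 2\mu_j\delta_j$ this reads $\alpha > (Q_G-h)/2 + \sum_j \mu_j\delta_j$, which tends to $(Q_G-h)/2$ as $\vec\delta \to 0^+$, so $\bar\alpha_{\vec\gamma}$ can be chosen arbitrarily close to $(Q_G-h)/2$.

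Combining these ingredients, Proposition~\ref{prp:partialweight2}(ii) yields the bound $\|\breve m\|_{L^1(G,\langle x\rangle_G^\alpha\,dx)} \leq C_{K,\alpha,\beta}\|m\|_{B^\beta_{q,q}}$ for every $\beta > \alpha + (Q_G-h)/2 + n/q - 1/\max\{2,q\}$ and every $m \in \D(\R^n)$ supported in $K$, which is the conclusion. The most delicate point is the integrability computation: the choice $\gamma_j \approx 1/(2\mu_j)$ simultaneously saturates the spectral-side constraint $\gamma_j < 1/2$ needed for $\tilde\zeta_{\vec\gamma} \in L^1_\loc(\lie{z}^*)$ and produces the spatial-side identity $\sum_j 2\mu_j\gamma_j = h$, which is precisely what turns the coordinate-dependent gain $\sum_j \mu_j$ into the invariant $h$ announced in the theorem.
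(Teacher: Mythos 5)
Your overall skeleton is the same as the paper's (set $w_j = |\omega_j(\cdot)|$, $\zeta_j = |\cdot(z_j)|$, check local $1$-boundedness of $\sigma_{\tilde\zeta_{\vec\gamma}}$ via Lemma~\ref{lem:homogeneouspushforward}, and feed an integrability estimate into Proposition~\ref{prp:partialweight2}(ii)), but the integrability computation contains a genuine gap: you weight the capacious coordinates by their \emph{$\delta_t$-homogeneity degrees} $\mu_j$, whereas both $\langle\cdot\rangle_G$ and $Q_G$ are governed by the \emph{descending-central-series} degrees $\kappa_j$ of the basis from Lemma~\ref{lem:homogeneousdualbasis}: one has $\langle x\rangle_G \sim 1+\sum_j |\hat v_j(x)|^{1/\kappa_j}$ and $Q_G=\sum_j\kappa_j$, and the crucial fact — the first assertion of Lemma~\ref{lem:homogeneousdualbasis}, which you never invoke — is that $\omega_j\circ P$ vanishes on $[\lie{g},\lie{g}]$, hence $\kappa_j=1$ for $j=1,\dots,h$, even though $\mu_j$ may well exceed $1$ (e.g.\ the Heisenberg group with dilations $\delta_t(X)=tX$, $\delta_t(Y)=t^aY$, $\delta_t(T)=t^{1+a}T$, $a>1$, is M\'etivier, and one may take $\omega=\bar Y^*$ of weight $a$; more generally $\sum_j\mu_j$ relates to $Q_\delta$, which is strictly larger than $Q_G$ off the stratified case). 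Concretely, on the region $\{|t_j|\le |y|'^{\mu_j}\}$ it is false that $|(t,y)|_G\approx |y|'$ when $\mu_j>1$ (since $\langle x\rangle_G\gtrsim |t_j|$), and the complementary coordinates carry homogeneous dimension $Q_G-h$, not $Q_G-\sum_j\mu_j$; so your claimed convergence criterion $2\alpha>Q_G-\sum_j 2\mu_j\gamma_j$ is not what your choice of $\gamma_j$ actually delivers. With the correct weights the criterion is $2\alpha>Q_G-\sum_j 2\kappa_j\gamma_j=Q_G-2\sum_j\gamma_j$, so your choice $\gamma_j=1/(2\mu_j)-\delta_j$ only yields the threshold $(Q_G-\sum_j\mu_j^{-1})/2$, which is strictly worse than $(Q_G-h)/2$ whenever some $\mu_j>1$.

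The repair is exactly the point you bypassed: the spectral-side constraint is only $2\gamma_j<1$ (local integrability of $|\tau(z_j)|^{-2\gamma_j}$ in independent directions), with no dependence on $\mu_j$, so one should let $\gamma_j\to 1/2$ for every $j$; then, distributing $\alpha=\sum_j\alpha_j$ with $2(\alpha_j+\gamma_j)>1=\kappa_j$ for $j\le h$ and $2\alpha_j>\kappa_j$ for $j>h$, the weight $\langle x\rangle_G^{-2\alpha}\prod_j\tilde w_j^{-2\gamma_j}$ is dominated by a product of integrable one-variable factors, and the admissible exponent $\bar\alpha_{\vec\gamma}$ tends to $\sum_{j>h}\kappa_j=Q_G-h$. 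This is the paper's argument; when all $\mu_j=1$ (e.g.\ M\'etivier groups with their natural dilations) your computation coincides with it, but as written it does not prove the theorem in the stated generality.
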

\begin{proof}
Let $\omega_1,\dots,\omega_h \in (\lie{g}/\lie{z})^*$ and $z_1,\dots,z_h \in \lie{z}$ be given by the definition of $h$-capacious, and set $w_j(\bar x) = |\omega_j(\bar x)|$, $\zeta_j(\tau) = |\tau(z_j)|$. Notice now that, since the $z_j$ are linearly independent, for every choice of $\gamma_1,\dots,\gamma_h \in \left]0,1/2\right[$, the function $\tilde\zeta_{\vec{\gamma}} = \prod_{j=1}^h (1+\zeta_j^{-2\gamma_j})$ is in $L^1_\loc(\lie{z}^*)$, so that, by Lemma~\ref{lem:homogeneouspushforward}, $\sigma_{\tilde\zeta_{\vec{\gamma}}}$ is a regular Borel measure; in fact, since the $z_j$ are homogeneous, $\sigma_{\tilde\zeta_{\vec{\gamma}}}$ is the sum of $\epsilon_t$-homogeneous regular Borel measures on $\R^n$ (with possibly different degrees of homogeneity), hence $\sigma_{\tilde\zeta_{\vec{\gamma}}}$ is locally $1$-bounded on $\R^n \setminus \{0\}$.

By Lemma~\ref{lem:homogeneousdualbasis}, we can find a homogeneous basis $v_1,\dots,v_k$ of $\lie{g}$, compatible with the descending central series, such that, if $\hat v_1,\dots,\hat v_h$ is the dual basis, then $\hat v_j = \omega_j \circ P$ for $j=1,\dots,h$; in particular we have $\tilde w_j(x) = 1 + |\hat v_j(x)|$. If we set $\kappa_j = \max \{r \tc v_j \in \lie{g}_{[r]}\}$, then $Q_G = \sum_{j=1}^k \kappa_j$ and $\langle x \rangle_G \sim 1 + \sum_{j=1}^k |\hat v_j(x)|^{1/\kappa_j}$ (cf.\ Proposition~2.1 of \cite{martini_spectral}), thus
\[\langle x \rangle_G^{-2\alpha_j} \leq C_{\alpha_j} (1 + |\hat v_j(x)|)^{-2\alpha_j/\kappa_j}\]
for $j=1,\dots,k$ and $\alpha_j \geq 0$. Moreover, since the $\omega_j \circ P$ are null on $[\lie{g},\lie{g}]$, then $\kappa_j = 1$ for $j=1,\dots,h$.

Notice now that, for fixed $\gamma_1,\dots,\gamma_h \in \left]0,1/2\right[$, if $\alpha \geq 0$ satisfies
\[2\alpha > 2\alpha_{\vec{\gamma}} = \sum_{j=1}^h (1-2\gamma_j) + \sum_{j=h+1}^k \kappa_j,\]
then we may choose $\alpha_1,\dots,\alpha_k \geq 0$ such that
\[\alpha = \sum_{j=1}^k \alpha_j, \qquad 2\alpha_j > \begin{cases}
1-2\gamma_j &\text{for $j=1,\dots,h$,}\\
\kappa_j &\text{for $j=h+1,\dots,k$,}
\end{cases}\]
therefore
\[\langle x \rangle_G^{-2\alpha} \, \prod_{j=1}^h \tilde w_j(x)^{-2\gamma_j} \leq \prod_{j=1}^h (1+|\hat v_j(x)|)^{-2(\alpha_j +\gamma_j)} \prod_{j=h+1}^k (1+|\hat v_j(x)|)^{-2\alpha_j/\kappa_j},\]
and the right-hand side is clearly integrable over $G$. We can thus apply Proposition~\ref{prp:partialweight2}(ii), and the conclusion follows because, if the $\gamma_j$ tend to $1/2$, then $\alpha_{\vec{\gamma}}$ tends to $\sum_{j=h+1}^k \kappa_j = Q_G - h$.
\end{proof}

Notice that, if $G$ is a M\'etivier group, by Proposition~\ref{prp:capacitycriteria}(i) we can take $h = \dim[\lie{g},\lie{g}]$, so that $Q_G - h = \dim G$; in fact, by Proposition~\ref{prp:capacitycriteria}(iv), the same holds if $G$ is a direct product of M\'etivier and Euclidean groups.

\section{Mihlin-H\"ormander multipliers}\label{section:mihlin}

Let $G$ be a homogeneous Lie group, with dimension at infinity $Q_G$, automorphic dilations $\delta_t$ and homogeneous dimension $Q_\delta$; as in \cite[\S2.1]{martini_spectral}, we suppose that the homogeneity degrees of the elements of the Lie algebra $\lie{g}$ are not less than $1$, so that $Q_\delta \geq Q_G$. Define $|\cdot|_G$, $\langle \cdot \rangle_G$ as in \S\ref{section:weightedestimates}, and denote by $|\cdot|_\delta$ a subadditive homogeneous norm on $G$ (cf.\ \cite{hebisch_smooth_1990}).

Let $L_1,\dots,L_n$ be a homogeneous weighted subcoercive system on $G$, with associated dilations $\epsilon_t$, and Plancherel measure $\sigma$. Denote moreover by $|\cdot|_\epsilon$ an $\epsilon_t$-homogeneous norm on $\R^n$, smooth off the origin.

Our starting point is, for some $q \in \left[1,\infty\right]$ and $s \in \R$, the following
\begin{quote} \noindent{\bf hypothesis \HP{q}{s}}: 
for some compact $K_0 \subseteq \R^n \setminus \{0\}$ such that
\[\bigcup_{t > 0} \epsilon_t(\mathring{K_0}) = \R^n \setminus \{0\},\]
for all $\beta > s$ and for all $m \in \D(\R^n)$ with $\supp m \subseteq K_0$, we have
\[\|\breve m\|_{L^1(G)} \leq C_{\beta} \|m\|_{B_{q,q}^\beta(\R^n)}.\]
\end{quote}
From Theorems~\ref{thm:weightedestimates} and \ref{thm:improvedl1estimates} we deduce immediately

\begin{prp}\label{prp:HPsatisfaction}
For every $q \in [1,\infty]$, the hypothesis \HP{q}{s} holds in each of the following cases:
\begin{itemize}
\item $s = Q_G/2 + n/q - 1/\max\{2,q\}$;
\item the Plancherel measure $\sigma$ is locally $d$-bounded on $\R^n \setminus \{0\}$ and $s = Q_G/2+ n/q-d/\max\{2,q\}$;
\item $G$ is $h$-capacious and $s = (Q_G-h)/2+n/q-1/\max\{2,q\}$.
\end{itemize}
\end{prp}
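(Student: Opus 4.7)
The plan is to reduce each of the three cases to one of the two weighted $L^1$ estimates already established (Theorems~\ref{thm:weightedestimates} and \ref{thm:improvedl1estimates}), applied with $p=1$, $\alpha=0$, and $D = \id$. In every case I would first fix a compact set of the form $K_0 = \{\lambda \in \R^n \tc r_1 \leq |\lambda|_\epsilon \leq r_2\}$ for some $0 < r_1 < r_2$; this $K_0$ sits inside $\R^n \setminus \{0\}$, and the $\epsilon_t$-homogeneity of $|\cdot|_\epsilon$ combined with $r_1 < r_2$ guarantees that $\bigcup_{t>0} \epsilon_t(\mathring{K_0}) = \R^n \setminus \{0\}$, as required by \HP{q}{s}.

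For the third (capacious) case, the stated bound is exactly the specialisation $\alpha = 0$ of Theorem~\ref{thm:improvedl1estimates}, with $K = K_0$, so there is nothing else to do. For the second case, I would apply Theorem~\ref{thm:weightedestimates} with $p = 1$, $D = \id$, $\alpha = 0$, the hypothesised value of $d$, and $\Omega = \R^n \setminus \{0\}$ (which is open and contains $K_0$); inserting these parameters into the threshold $\beta > \alpha + Q_G(1/\min\{2,p\} - 1/2) + n/q - d/\max\{2,q\}$ collapses it to $\beta > Q_G/2 + n/q - d/\max\{2,q\}$, which is precisely the bound asserted in case~2.

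For the first case, the point is that the hypothesis of case~2 is automatically available with $d = 1$. Indeed, in the present homogeneous setting, the scaling relation \eqref{eq:homogeneity} makes the Plancherel measure $\sigma$ homogeneous of degree $Q_\delta$ with respect to the dilations $\epsilon_t$; as noted in the paragraph preceding Lemma~\ref{lem:triebeltrace}, any such nontrivial dilation-homogeneous regular Borel measure on $\R^n$ is automatically locally $1$-bounded on $\R^n \setminus \{0\}$. Feeding this observation into the case~2 argument gives the threshold $s = Q_G/2 + n/q - 1/\max\{2,q\}$ claimed in case~1.

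The only non-bookkeeping step is thus the $d = 1$ observation that subsumes case~1 under case~2; I do not expect any genuine obstacle, which is consistent with the author's own remark that the proposition is an immediate consequence of the two theorems.
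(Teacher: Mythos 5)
Your proposal is correct and follows essentially the same route as the paper, which deduces the proposition directly from Theorems~\ref{thm:weightedestimates} and \ref{thm:improvedl1estimates} with $p=1$, $\alpha=0$, $D=\id$, the key point for the first case being exactly your observation that the $\epsilon_t$-homogeneity of $\sigma$ (from \eqref{eq:homogeneity}) makes it locally $1$-bounded on $\R^n \setminus \{0\}$, as noted before Lemma~\ref{lem:triebeltrace} and used again in the proof of Proposition~\ref{prp:HPKsatisfaction}(i). No gaps to report.
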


In the rest of this section, we forget how such hypothesis may be checked, and we focus on its consequences.

\begin{prp}\label{prp:interpolatedweightedestimates}
Suppose that \HP{q}{s} holds for some $q \in [1,\infty]$ and $s \in \R$. Then $s \geq n/q$. Moreover, for every compact $K \subseteq \R^n \setminus \{0\}$, for every $\alpha \geq 0$ and $\beta > \alpha + s$, for every $D \in \Diff(G)$, for every $m \in B_{q,q}^\beta(\R^n)$ with $\supp m \subseteq K$, we have
\begin{equation}\label{eq:interpolatedweightedestimates}
\|\langle \cdot \rangle_G^\alpha \, D \breve m\|_{L^1(G)} \leq C_{K,D,\alpha,\beta} \|m\|_{B_{q,q}^\beta(\R^n)}.
\end{equation}
\end{prp}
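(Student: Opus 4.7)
My plan is to prove the two statements separately. For the lower bound $s \geq n/q$, pick $\lambda_0 \in \mathring K_0 \cap \supp \sigma$, which is nonempty because $\supp \sigma$ is $\epsilon_t$-invariant (by the homogeneity relation \eqref{eq:homogeneity}) and $\bigcup_{t>0}\epsilon_t(\mathring K_0) = \R^n \setminus \{0\}$ covers all of $\supp \sigma \setminus \{0\}$. Test \HP{q}{s} on the bumps $\phi_r(\lambda) = \phi((\lambda - \lambda_0)/r)$, where $\phi \in \D(\R^n)$ is fixed with $\phi(0) = 1$ and $r > 0$ is small. Standard rescaling gives $\|\phi_r\|_{B_{q,q}^\beta(\R^n)} \lesssim r^{n/q - \beta}$, while $\|\phi_r(L)\|_{L^2 \to L^2} = \|\phi_r\|_{L^\infty(\sigma)} \geq 1$ by continuity of $\phi_r$ at $\lambda_0 \in \supp \sigma$. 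Since $\|\phi_r(L)\|_{L^2 \to L^2} \leq \|\breve{\phi_r}\|_{L^1(G)}$ by Young's inequality, the hypothesis forces $1 \lesssim r^{n/q - \beta}$; sending $r \to 0^+$ rules out any $\beta < n/q$, hence $s \geq n/q$.

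For the main inequality I would reduce in three steps. First, reduce to $D = 1$: choose $m_0 \in \D(\R^n)$ with $m_0 \equiv 1$ on a neighborhood of $K$; joint functional calculus gives $\breve m = \breve m * \breve m_0$, and left-invariance of $D \in \Diff(G)$ yields $D \breve m = \breve m * D \breve m_0$. Submultiplicativity of $\langle \cdot \rangle_G$ (from subadditivity of $d_G$) gives a weighted Young inequality $\|\langle\cdot\rangle_G^\alpha (f * g)\|_{L^1} \leq \|\langle\cdot\rangle_G^\alpha f\|_{L^1} \, \|\langle\cdot\rangle_G^\alpha g\|_{L^1}$, and the factor $\|\langle\cdot\rangle_G^\alpha D \breve m_0\|_{L^1}$ is finite by Theorem~\ref{thm:weightedestimates}. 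Second, extend \HP{q}{s} from $K_0$ to an arbitrary compact $K \subseteq \R^n \setminus \{0\}$: cover $K$ by finitely many $\epsilon_{t_j}(\mathring K_0)$, introduce a subordinate smooth partition of unity $\{\chi_j\}$, rescale each $(m \chi_j) \circ \epsilon_{t_j}$ (now supported in $K_0$) and apply \HP{q}{s}. Relation \eqref{eq:homogeneity} shows the rescaling preserves the $L^1(G)$ norm of the kernel, and multiplication by $\chi_j$ together with the anisotropic change of variables $\epsilon_{t_j}$ (finitely many $t_j$) act boundedly on the Besov norms, yielding $\|\breve m\|_{L^1} \leq C_{K,\beta_0} \|m\|_{B_{q,q}^{\beta_0}}$ for every $\beta_0 > s$ and $\supp m \subseteq K$.

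Third, interpolate. Theorem~\ref{thm:weightedestimates} with $p = 2$, $D = 1$, $d = 0$, combined with Cauchy--Schwarz and the fact that $\|\langle\cdot\rangle_G^{-\gamma}\|_{L^2(G)} < \infty$ for $\gamma > Q_G/2$, gives the heavily-weighted bound $\|\langle\cdot\rangle_G^A \breve m\|_{L^1} \leq C_{K,A,\beta_A} \|m\|_{B_{q,q}^{\beta_A}}$ whenever $\beta_A > A + Q_G/2 + n/q$. Thus the operator $m \mapsto \breve m$ is bounded both $B_{q,q}^{\beta_0} \to L^1(G)$ and $B_{q,q}^{\beta_A} \to L^1(\langle\cdot\rangle_G^A\,dx)$. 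Complex interpolation, via $[L^1, L^1(\langle\cdot\rangle_G^A)]_\theta = L^1(\langle\cdot\rangle_G^{\theta A})$ and $[B_{q,q}^{\beta_0}, B_{q,q}^{\beta_A}]_\theta = B_{q,q}^{(1-\theta)\beta_0 + \theta \beta_A}$, produces a bounded map $B_{q,q}^{(1-\theta)\beta_0 + \theta \beta_A} \to L^1(\langle\cdot\rangle_G^{\theta A})$. Setting $\alpha = \theta A$ and letting $A \to \infty$ while pulling $\beta_0 \to s^+$ and $\beta_A - A \to (Q_G/2 + n/q)^+$, the interpolated exponent $\beta_0 + \theta(\beta_A - \beta_0)$ tends to $s + \alpha$ from above, so every $\beta > \alpha + s$ is achievable. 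A standard density/approximation argument then extends the estimate from smooth $m$ to all $m \in B_{q,q}^\beta$ with $\supp m \subseteq K$.

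The main technical obstacle is the interpolation step: carrying out the complex interpolation of weighted $L^1$ spaces rigorously, verifying the identity $[L^1(w_0), L^1(w_1)]_\theta = L^1(w_0^{1-\theta} w_1^\theta)$ in this setting, and keeping track of the multi-parameter limits $A \to \infty$, $\beta_0 \to s^+$, $\beta_A - A \to (Q_G/2 + n/q)^+$ so that the Besov exponent can be pushed arbitrarily close to $\alpha + s$ from above. All the underlying ingredients are classical, but the bookkeeping requires some care.
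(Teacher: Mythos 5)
Your proposal is correct and takes essentially the same route as the paper: the lower bound $s \geq n/q$ via testing \HP{q}{s} on functions concentrated near a point of $\mathring{K_0} \cap \supp\sigma$ (the paper deduces a sup-norm/Besov embedding and cites Triebel, you rescale bumps directly — same substance), then partition of unity plus the homogeneity relation \eqref{eq:homogeneity} to pass from $K_0$ to a general compact $K$, and finally interpolation with the weighted estimates of Theorem~\ref{thm:weightedestimates}, together with the convolution trick $D\breve m = \breve m * D\breve m_0$ and an approximation argument, exactly as the paper's (much terser) proof indicates. No gaps beyond the level of detail the paper itself leaves implicit.
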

\begin{proof}
Let $\lambda \in \mathring{K}_0 \cap \supp\sigma$. For every $m \in \D(\R^n)$ with $\supp m \subseteq \mathring{K}_0$, we have $|m(\lambda)| \leq \|m\|_{L^\infty(\sigma)} \leq \|\breve m\|_1 \leq C_\beta \|m\|_{B_{q,q}^\beta}$ for all $\beta > s$. Such an inequality gives easily $\|f\|_\infty \leq C_\beta \|f\|_{B_{q,q}^\beta}$ for all $f$ in the Schwartz class and $\beta > s$, which however can hold only if $s \geq n/q$ (cf.\ Theorem 1 of \S2.6.2 of \cite{triebel_spaces_1978}).

The hypothesis \HP{q}{s} gives \eqref{eq:interpolatedweightedestimates} in the case $\alpha = 0$, $D = 1$, $m$ smooth, $K = K_0$. The extension to a generic compact $K \subseteq \R^n \setminus \{0\}$ is performed by a partition-of-unity argument and exploiting homogeneity. The full generality is then reached by approximation and interpolation, as in the proof of Theorem~\ref{thm:weightedestimates}.
\end{proof}

Notice that, by \cite[Proposition 2.1]{martini_spectral}, there are constants $a,C > 0$ such that
\begin{equation}\label{eq:moduluscomparison}
1 + |x|_\delta \leq C \langle x \rangle_G^a.
\end{equation}

\begin{cor}\label{cor:usefulestimates}
Suppose that \HP{q}{s} holds. Let $K \subseteq \R^n \setminus \{0\}$ be compact, $\beta > s$. If $m \in B^\beta_{q,q}(\R^n)$ and $\supp m \subseteq K$, then $\breve m \in L^1(G)$. Moreover, for $0 \leq \alpha < (\beta - s)/a$,
\begin{equation}\label{eq:weightedl1}
\int_G (1+|x|_\delta)^\alpha |\breve m(x)| \,dx \leq C_{K,\alpha,\beta} \|m\|_{B_{q,q}^\beta}
\end{equation}
and, for all $h \in G$,
%\begin{gather}
%\label{eq:rightlipschitz}
\begin{equation}\label{eq:lipschitz}
\| \RA_h \breve m - \breve m\|_1 \leq C_{K,\beta} \|m\|_{B_{q,q}^\beta} |h|_\delta, \qquad 
%\label{eq:leftlipschitz}
\|\LA_h \breve m - \breve m\|_1 \leq C_{K,\beta} \|m\|_{B_{q,q}^\beta} |h|_\delta.
\end{equation}
%\end{gather}
\end{cor}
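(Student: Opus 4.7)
The $L^1$-integrability of $\breve m$ and the weighted estimate \eqref{eq:weightedl1} are a direct corollary of Proposition~\ref{prp:interpolatedweightedestimates} taken with $D=1$. The comparison \eqref{eq:moduluscomparison} gives $(1+|x|_\delta)^\alpha \leq C^\alpha \langle x \rangle_G^{a\alpha}$, so for any $0 \leq \alpha < (\beta-s)/a$ we have $a\alpha < \beta-s$, and Proposition~\ref{prp:interpolatedweightedestimates} yields
\[\|(1+|\cdot|_\delta)^\alpha \, \breve m\|_1 \leq C \|\langle \cdot \rangle_G^{a\alpha} \, \breve m\|_1 \leq C_{K,\alpha,\beta} \|m\|_{B^\beta_{q,q}};\]
the case $\alpha = 0$ gives $\breve m \in L^1(G)$.

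For the right translation estimate in \eqref{eq:lipschitz}, I treat the two regimes $|h|_\delta > 1$ and $|h|_\delta \leq 1$ separately. When $|h|_\delta > 1$, the triangle inequality and the $L^1$-isometry of $\RA_h$ give $\|\RA_h \breve m - \breve m\|_1 \leq 2\|\breve m\|_1 \leq C_{K,\beta} \|m\|_{B^\beta_{q,q}} |h|_\delta$. When $|h|_\delta \leq 1$, I fix a homogeneous basis $X_1,\dots,X_N$ of $\lie{g}$ with degrees $d_j \geq 1$ and write $h$ in canonical coordinates of the second kind, $h = \exp(t_1 X_1) \cdots \exp(t_N X_N)$, so that $|t_j| \lesssim |h|_\delta^{d_j} \leq |h|_\delta$. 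Setting $h_j = \exp(t_j X_j)$ and telescoping
\[\RA_h - \id = \sum_{j=1}^N \RA_{h_1 \cdots h_{j-1}}(\RA_{h_j} - \id),\]
together with the $L^1$-isometry of $\RA_g$, reduces the problem to bounding each summand. The fundamental theorem of calculus along the one-parameter subgroup gives $\|(\RA_{h_j} - \id)\breve m\|_1 \leq |t_j| \|X_j \breve m\|_1$, where $X_j$ acts as the corresponding left-invariant differential operator, and Proposition~\ref{prp:interpolatedweightedestimates} with $D = X_j$, $\alpha = 0$ yields $\|X_j \breve m\|_1 \leq C_{K,\beta} \|m\|_{B^\beta_{q,q}}$. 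Summing over $j$ concludes this case.

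The left translation estimate follows from the right one via inversion. Since $G$ is nilpotent hence unimodular, the involution $f \mapsto f^\vee$ with $f^\vee(x) = f(x^{-1})$ is an $L^1$-isometry, and a direct computation yields $(\LA_h \breve m - \breve m)^\vee = \RA_h \breve m^\vee - \breve m^\vee$. On the other hand, the adjoint relation $m(L)^* = \overline{m}(L)$, combined with the definition of $f \mapsto f^*$ (and unimodularity), gives $\Kern_L \overline{m} = (\breve m)^* = \overline{\breve m^\vee}$, hence $\breve m^\vee = \overline{\Kern_L \overline{m}}$. Since $\RA_h$ commutes with complex conjugation,
\[\|\LA_h \breve m - \breve m\|_1 = \|\RA_h \breve m^\vee - \breve m^\vee\|_1 = \|\RA_h \Kern_L \overline{m} - \Kern_L \overline{m}\|_1,\]
and the right translation bound, applied to the multiplier $\overline{m}$ (which has the same compact support $K$ and the same Besov norm as $m$), concludes the proof.

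The subtle point is the last step: a direct attempt to bound $\|\LA_h \breve m - \breve m\|_1$ using right-invariant vector fields would express them as $\tilde X_j = X_j + \sum_{k : d_k > d_j} P_{jk}(x) X_k$ with polynomial coefficients of positive homogeneous degree $d_k - d_j$; absorbing these weights via Proposition~\ref{prp:interpolatedweightedestimates} would require Sobolev regularity strictly larger than $\beta > s$, which is not available. The inversion trick sidesteps this loss of regularity by reducing directly to the right translation case, where no polynomial weights appear.
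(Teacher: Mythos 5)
Your proof is correct and follows essentially the same route as the paper: the weighted estimate and $L^1$ membership come from Proposition~\ref{prp:interpolatedweightedestimates} together with \eqref{eq:moduluscomparison}, the right-translation bound from the one-parameter-subgroup fundamental theorem of calculus, a homogeneous basis and $d_j \geq 1$ (with the large-$|h|_\delta$ case absorbed by $\|\breve m\|_1$), and the left-translation bound by passing to $\overline m$. Your explicit inversion argument, using $\Kern_L\overline m = (\breve m)^*$ and unimodularity, is exactly what the paper's terse remark ``the latter is obtained by replacing $m$ with $\overline{m}$'' encodes.
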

\begin{proof}
Since $\beta > s + a\alpha$, by Proposition~\ref{prp:interpolatedweightedestimates} and \eqref{eq:moduluscomparison} we have
\[\int_G (1+|x|_\delta)^\alpha |\breve m(x)| \,dx \leq C_\alpha \int_G \langle x \rangle_G^{a\alpha} |\breve m(x)| \,dx \leq C_{K,\alpha,\beta} \|m\|_{B_{q,q}^\beta},\]
and in particular $\breve m \in L^1(G)$.

Starting from the inequality $\| \RA_{\exp(tX)} \breve m - \breve m\|_1 \leq \|X \breve m\|_1 |t|$, true for all $X \in \lie{g}$ and $t \in \R$, having chosen a basis $X_1,\dots,X_k$ of $\lie{g}$, with $X_j$ homogeneous of degree $d_j$, we easily obtain $\| \RA_h \breve m - \breve m \|_1 \leq C \sum_{j=1}^k \|X_j \breve m\|_1 |h|_\delta^{d_j}$, so that also
\[\| \RA_h \breve m - \breve m \|_1 \leq C \left(\|\breve m\|_1 + \textstyle\sum_{j=1}^k \|X_j \breve m\|_1\right) |h|_\delta,\]
since $d_j \geq 1$. However $\|\breve m\|_1 + \sum_{j=1}^k \|X_j \breve m\|_1 \leq C_{K,\beta} \|m\|_{B^\beta_{q,q}}$ by Proposition~\ref{prp:interpolatedweightedestimates}, thus we get the former of \eqref{eq:lipschitz}; the latter is obtained by replacing $m$ with $\overline{m}$.
\end{proof}

\begin{lem}\label{lem:spectraldecomposition}
Let $m$ be a bounded Borel function on $\R^n$. Then we can find bounded Borel functions $m_j$ on $\R^n$ (for $j \in \Z$) such that
\begin{equation}\label{eq:supportdecomposition}
\supp m_j \subseteq \{\lambda \tc 2^{-1} \leq |\lambda|_\epsilon \leq 2\}, \qquad \|m_j\|_{B_{q,q}^\beta} \leq C_{q,\beta} \|m\|_{M_\epsilon B_{q,q}^\beta}
\end{equation}
for all $q \in [1,\infty]$ and $\beta \geq 0$, and moreover
\begin{equation}\label{eq:spectraldecomposition}
\breve m = \sum_{j \in \Z} 2^{-Q_\delta j} \breve m_j \circ \delta_{2^{-j}},
\end{equation}
in the sense of strong convergence of the corresponding convolution operators.
\end{lem}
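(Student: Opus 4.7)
The natural approach is a smooth dyadic resolution of the identity adapted to the dilations $\epsilon_t$. First I would choose $\chi \in \D(\R^n)$ with $\supp \chi \subseteq \{2^{-1} \leq |\lambda|_\epsilon \leq 2\}$ such that $\sum_{j \in \Z} \chi(\epsilon_{2^j}(\lambda)) \equiv 1$ on $\R^n \setminus \{0\}$; the smoothness of $|\cdot|_\epsilon$ off the origin makes the construction standard. Then I would define
\[m_j(\mu) \defeq m(\epsilon_{2^{-j}}(\mu)) \, \chi(\mu), \qquad j \in \Z,\]
whose support is contained in the fixed annulus $\{2^{-1} \leq |\mu|_\epsilon \leq 2\}$, giving the first half of \eqref{eq:supportdecomposition}.

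For the Besov estimate, I would observe that $m_j = (m \circ \epsilon_{2^{-j}})\,\chi$ is, by definition, the object whose $B^\beta_{q,q}$-norm is controlled (uniformly in $j$) by $\|m\|_{M_\epsilon B^\beta_{q,q}}$; indeed, since the $M_\epsilon B^\beta_{q,q}$-quantity is independent (up to constants) of the particular annular cut-off used in its definition, one may take $\chi$ itself as the cut-off and the bound $\|m_j\|_{B^\beta_{q,q}} \leq C_{q,\beta}\|m\|_{M_\epsilon B^\beta_{q,q}}$ is immediate from the supremum over $t>0$ with $t = 2^{-j}$.

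Next, the pointwise identity $m(\lambda) = \sum_{j \in \Z} m_j(\epsilon_{2^j}(\lambda))$ holds on $\R^n \setminus \{0\}$ by the partition-of-unity property, and the partial sums are dominated by a fixed multiple of $\|m\|_\infty$. Applying the homogeneity relation \eqref{eq:homogeneity} with $t=2^j$ to each summand transforms each $\Kern_L(m_j \circ \epsilon_{2^j})$ into $2^{-Q_\delta j}\, \breve m_j \circ \delta_{2^{-j}}$, producing exactly the series on the right-hand side of \eqref{eq:spectraldecomposition}. Finally, I would invoke dominated convergence for the joint spectral resolution of $L_1,\dots,L_n$: since the truncated partial sums of $\sum_j m_j \circ \epsilon_{2^j}$ are uniformly bounded and converge pointwise to $m$ (off the origin, where every summand is zero anyway), the corresponding operators converge to $m(L)$ in the strong operator topology on $L^2(G)$, which is the sense of convergence asserted.

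The only real subtlety in this plan is the passage from pointwise convergence of the spectral symbols to strong convergence of the convolution operators: this is settled by the bounded Borel functional calculus (a uniformly bounded sequence of Borel functions converging pointwise $E$-a.e.\ to $m$ yields strong convergence of the associated operators), so I do not expect a genuine obstacle. Any potential discrepancy at $\lambda=0$ is harmless because every $m_j \circ \epsilon_{2^j}$ vanishes on a neighborhood of the origin, and the statement concerns only the convolution operator corresponding to $\breve m$.
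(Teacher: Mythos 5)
Your construction is exactly the paper's: the same annular cut-off $\chi$ (the paper calls it $\eta$) with $\sum_{j\in\Z}\chi\circ\epsilon_{2^j}=1$ off the origin, the same $m_j=(m\circ\epsilon_{2^{-j}})\,\chi$, the same appeal to the independence of the $M_\epsilon B^\beta_{q,q}$-quantity from the choice of cut-off, and the same use of \eqref{eq:homogeneity} plus bounded pointwise convergence in the functional calculus to pass from the symbol decomposition to \eqref{eq:spectraldecomposition}.

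The one step where your argument as written does not hold up is the treatment of the origin. The partial sums of $\sum_j m_j\circ\epsilon_{2^j}$ converge pointwise, boundedly, to $m\cdot\chr_{\R^n\setminus\{0\}}$, so the bounded Borel functional calculus gives strong convergence of the corresponding operators to $(m\,\chr_{\R^n\setminus\{0\}})(L)=m(L)-m(0)\,E(\{0\})$, where $E$ is the joint spectral resolution. Your stated reason why the discrepancy at $\lambda=0$ is harmless (that each $m_j\circ\epsilon_{2^j}$ vanishes near the origin and that only the convolution operator of $\breve m$ is at stake) is not a justification: it is precisely at $\lambda=0$ that the limit symbol may differ from $m$, and if $E(\{0\})\neq 0$ and $m(0)\neq 0$ the limit operator is not $m(L)$. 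What closes the gap, and what the paper invokes explicitly, is that $\sigma(\{0\})=0$: by \eqref{eq:homogeneity}, $\sigma(\{0\})=\sigma(\epsilon_t(\{0\}))=t^{Q_\delta}\sigma(\{0\})$ for all $t>0$, and $\sigma$ is finite on compacta, so $\sigma(\{0\})=0$; the Plancherel formula then gives $\Kern_L\chr_{\{0\}}=0$, hence $E(\{0\})=0$ on $L^2(G)$. This is not a vacuous concern in general: Proposition~\ref{prp:zeroprojection} shows that for transferred systems the spectral projection onto the origin can be nonzero, which is exactly why the hypothesis $m|_{X_{\vec{n}}}=0$ appears in Theorem~\ref{thm:marcinkiewicz}. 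With this one line added, your proof coincides with the paper's.
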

\begin{proof}
Set $K = \{\lambda \tc 2^{-1} \leq |\lambda|_\epsilon \leq 2\}$. Choose a nonnegative $\eta \in \D(\R^n)$ supported in $K$ and such that $\sum_{j \in \Z} \eta \circ \epsilon_{2^j} = 1$ off the origin, and let $m_j =  (m \circ \epsilon_{2^{-j}}) \, \eta$. Then clearly \eqref{eq:supportdecomposition} is satisfied, and moreover $m = \sum_{j \in \Z} m_j \circ \epsilon_{2^j}$ off the origin. In fact, this is locally a finite sum and the convergence is dominated by the constant $\|m\|_\infty$. Since $\sigma(\{0\}) = 0$, by the spectral theorem and \eqref{eq:homogeneity} we then have \eqref{eq:spectraldecomposition}, in the sense of strong convergence of the corresponding convolution operators.
\end{proof}

\begin{prp}\label{prp:singularkernel}
Suppose that \HP{q}{s} holds. Let $\beta > s$. If $m$ is a bounded Borel function on $\R^n$ such that $\|m\|_{M_\epsilon B_{q,q}^\beta} < \infty$, then $\breve m|_{G \setminus \{e\}} \in L^1_\loc(G \setminus \{e\})$, and moreover
\begin{gather}
\label{eq:rightcz} \int_{|x|_\delta \geq 2|h|_\delta} |\breve m(xh) - \breve m(x)| \,dx \leq C_{\beta} \|m\|_{M_\epsilon B_{q,q}^\beta},\\
\label{eq:leftcz} \int_{|x|_\delta \geq 2|h|_\delta} |\breve m(hx) - \breve m(x)| \,dx \leq C_{\beta} \|m\|_{M_\epsilon B_{q,q}^\beta}
\end{gather}
for all $h \in G \setminus \{e\}$.
\end{prp}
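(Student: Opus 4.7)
My plan is to reduce the Proposition to uniform estimates on the dyadic pieces $m_j$ furnished by Lemma~\ref{lem:spectraldecomposition}: each $m_j$ is supported in the fixed annulus $\{2^{-1} \leq |\lambda|_\epsilon \leq 2\}$ with $\|m_j\|_{B^\beta_{q,q}} \leq C\|m\|_{M_\epsilon B^\beta_{q,q}}$ uniformly in $j$. Applying Corollary~\ref{cor:usefulestimates} to each $m_j$ then yields, for some fixed $\alpha \in (0,(\beta-s)/a)$, the simultaneous bounds
\[
\|(1+|y|_\delta)^\alpha \breve m_j\|_{L^1(G)}, \quad
|h'|_\delta^{-1}\|\RA_{h'} \breve m_j - \breve m_j\|_1, \quad
|h'|_\delta^{-1}\|\LA_{h'}\breve m_j - \breve m_j\|_1
\;\leq\; C\|m\|_{M_\epsilon B^\beta_{q,q}}.
\]

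For the H\"ormander-type condition \eqref{eq:rightcz}, the substitution $y=\delta_{2^{-j}}(x)$ combined with the group-automorphism identity $\delta_{2^{-j}}(xh)=\delta_{2^{-j}}(x)\,\delta_{2^{-j}}(h)$ reduces the $j$-th term of the dyadic sum to
\[
I_j \;=\; \int_{|y|_\delta \geq 2|h'|_\delta} |\breve m_j(yh') - \breve m_j(y)|\,dy, \qquad h' := \delta_{2^{-j}}(h).
\]
I split the sum according to whether $|h'|_\delta \leq 1$ or $|h'|_\delta > 1$ (i.e.\ whether $2^j$ is larger or smaller than $|h|_\delta$). In the former regime the right-Lipschitz bound gives $I_j \leq C|h'|_\delta \|m\|_{M_\epsilon B^\beta_{q,q}} = C(|h|_\delta 2^{-j})\|m\|_{M_\epsilon B^\beta_{q,q}}$; in the latter the triangle inequality combined with the weighted bound applied to the tail $\{|y|_\delta \geq c|h'|_\delta\}$ gives $I_j \leq C|h'|_\delta^{-\alpha}\|m\|_{M_\epsilon B^\beta_{q,q}} = C(|h|_\delta 2^{-j})^{-\alpha}\|m\|_{M_\epsilon B^\beta_{q,q}}$. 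Both sides of the split then form geometric series in $2^j/|h|_\delta$, and their total is controlled by a multiple of $\|m\|_{M_\epsilon B^\beta_{q,q}}$. The left-translation estimate \eqref{eq:leftcz} is obtained by the same argument, substituting the left-Lipschitz half of \eqref{eq:lipschitz} and using left-invariance features analogously.

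For the local integrability statement, let $K \subseteq G \setminus \{e\}$ be compact, so $K \subseteq \{r_1 \leq |x|_\delta \leq r_2\}$. After the same rescaling, $\int_K |2^{-Q_\delta j}\breve m_j\circ \delta_{2^{-j}}|\,dx = \int_{\delta_{2^{-j}}(K)}|\breve m_j(y)|\,dy$, the set of integration lying in the annulus $\{2^{-j}r_1 \leq |y|_\delta \leq 2^{-j}r_2\}$. For $j$ large negative, the weighted bound on the tail $\{|y|_\delta \geq 2^{-j}r_1\}$ yields decay $\lesssim 2^{j\alpha}r_1^{-\alpha}\|m\|_{M_\epsilon B^\beta_{q,q}}$; for $j$ large positive, Cauchy--Schwarz applied to the small set $\delta_{2^{-j}}(K)$ (of Haar measure $2^{-jQ_\delta}|K|$) combined with the Plancherel-based bound $\|\breve m_j\|_2 = \|m_j\|_{L^2(\sigma)} \lesssim \|m_j\|_\infty \lesssim \|m\|_{M_\epsilon B^\beta_{q,q}}$ (valid because $\beta > s \geq n/q$, by Proposition~\ref{prp:interpolatedweightedestimates} and Besov embedding) yields decay $\lesssim 2^{-jQ_\delta/2}$. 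Summing both geometric tails shows that the series \eqref{eq:spectraldecomposition} converges absolutely in $L^1(K)$, so it represents $\breve m$ on $G \setminus \{e\}$ as a locally integrable function.

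The one subtle point to check carefully is the change of variable $z = yh'$ used in the first summand of $I_j$ when $|h'|_\delta > 1$: by right-invariance of Haar measure, $\int_{|y|_\delta \geq 2|h'|_\delta} |\breve m_j(yh')|\,dy = \int_{|z(h')^{-1}|_\delta \geq 2|h'|_\delta}|\breve m_j(z)|\,dz$, and one must verify via sub-additivity of $|\cdot|_\delta$ and the symmetry $|(h')^{-1}|_\delta \leq C|h'|_\delta$ that this region is contained in $\{|z|_\delta \geq c|h'|_\delta\}$ for some absolute $c>0$, so that the weighted $L^1$ bound can be applied. Everything else is a routine Calder\'on--Zygmund-style dyadic summation.
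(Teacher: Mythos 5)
Your proof is correct and follows essentially the same route as the paper's: the same dyadic decomposition from Lemma~\ref{lem:spectraldecomposition}, with the weighted $L^1$ estimate \eqref{eq:weightedl1} handling the scales $2^j \lesssim |h|_\delta$ (and the far-out annuli in the local-integrability part), the Lipschitz estimate \eqref{eq:lipschitz} handling $2^j \gtrsim |h|_\delta$, and Cauchy--Schwarz plus the Plancherel formula for the small scales. The only cosmetic differences are that you rescale each dyadic term before splitting and bound $\|m_j\|_\infty$ via Besov embedding rather than directly by $\|m\|_\infty$, and the containment check you flag (subadditivity together with the symmetry $|x^{-1}|_\delta = |x|_\delta$ of the homogeneous norm) is exactly what the paper uses implicitly, so it goes through.
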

\begin{proof}
Let the $m_j$ be given by Lemma~\ref{lem:spectraldecomposition} and set $u_j = 2^{-Q_\delta j} \breve m_j \circ \delta_{2^{-j}}$.

Firstly we prove that the convergence in \eqref{eq:spectraldecomposition} holds also in $L^1_\loc(G \setminus \{e\})$. In fact, let $B_k = \{ x \in G \tc 2^k \leq |x|_\delta \leq 2^{k+1}\}$; it is sufficient to prove the convergence in each $L^1(B_k)$. We have $\int_{B_k} |u_j| \,d\mu = \int_{B_{k-j}} |\breve m_j| \,d\mu$ and, for $j \leq k$,
\[\int_{B_{k-j}} |\breve m_j(x)| \,dx \leq 2^{\alpha (j-k)} \int_{B_{k-j}} |\breve m_j(x)| |x|_\delta^{\alpha} \,dx \leq C 2^{\alpha(j-k)} \|m\|_{M_\epsilon B_{q,q}^\beta}\]
(where $\alpha > 0$ is as in \eqref{eq:weightedl1}), whereas, for $j \geq k$,
\[\int_{B_{k-j}} |\breve m_j(x)| \,dx \leq \|\breve m_j\|_2  \sqrt{\mu(B_{k-j})} \leq \sqrt{\sigma(K)\mu(B_0)} \|m\|_\infty \,  2^{Q_\delta (k-j)/2},\]
(here we use a uniform estimate on the $L^2$-norms of the $\breve m_j$) so that
\[\sum_j \int_{B_k} |u_j| \,d\mu \leq C' \sum_{j \leq k} 2^{\alpha(j-k)} + C'' \sum_{j \geq k} 2^{Q_\delta (k-j)/2} < \infty.\]
This shows (by uniqueness of limits) that the restriction of the distribution $\breve m$ to $G \setminus \{e\}$ coincides with a function in $L^1_\loc(G \setminus \{e\})$.

Since $\breve m = \sum_{j \in \Z} u_j$ in $L^1_\loc(G \setminus \{e\})$, then $\RA_h \breve m - \breve m = \sum_{j \in \Z} (\RA_h u_j - u_j)$ in $L^1_\loc(G \setminus \{e,h^{-1}\})$, so that in particular
\begin{equation}\label{eq:controlsum}
\int_{|x|_\delta \geq 2|h|_\delta} |\breve m(xh) - \breve m(x)| \,dx \leq \sum_{j \in \Z} \int_{|x|_\delta \geq 2|h|_\delta} |u_j(xh) - u_j(x)| \,dx.
\end{equation}
Let $k \in \Z$. Then, for $j < k$, the $j$-th summand in the right-hand side of \eqref{eq:controlsum} is not greater than
\[%\int_{|x|_\delta \geq 2|h|_\delta} |u_j(xh) - u_j(x)| \,dx \leq 
2 \int_{|x|_\delta \geq |h|_\delta} |u_j(x)| \,dx \leq C_{\beta} \frac{2^{\alpha j}}{|h|_\delta^\alpha} \|m\|_{M_\epsilon B_{q,q}^\beta}\]
by \eqref{eq:weightedl1}, whereas, for $j \geq k$, it is not greater than
%\begin{multline*}
\[%\int_{|x|_\delta \geq 2|h|_\delta} |u_j(xh) - u_j(x)| \,dx \\\leq
\int_G |\breve m_j(y \delta_{2^{-j}}(h)) - \breve m_j(y)| \,dy 
\leq C_{\beta} \frac{|h|_\delta}{2^j} \|m\|_{M_\epsilon B_{q,q}^\beta}
\]
%\end{multline*}
by \eqref{eq:lipschitz}. Putting all together, the left-hand side of \eqref{eq:controlsum} is majorized by
\[%\int_{|x|_\delta \geq 2|h|_\delta} |\breve m(xh) - \breve m(x)|\,dx \leq 
C_{\beta} \|m\|_{M_\epsilon B_{q,q}^\beta} \left(\frac{2^{k\alpha}}{|h|_\delta^\alpha} \sum_{j<0} 2^{j\alpha} + \frac{|h|_\delta}{2^{k}} \sum_{j \geq 0} 2^{-j}\right)\]
and, in order to obtain an estimate independent of $h$, it is sufficient to choose a $k$ such that $2^k \leq |h|_\delta < 2^{k+1}$. Hence we have proved \eqref{eq:rightcz}; the inequality \eqref{eq:leftcz} is obtained analogously.
\end{proof}

Here is finally the multiplier theorem.

\begin{theo}\label{thm:mihlinhoermander}
Suppose that \HP{q}{s} holds. If $m$ is a bounded Borel function on $\R^n$ such that $\|m\|_{M_\epsilon B_{q,q}^\beta} < \infty$ for some $\beta > s$, then the operator $m(L)$ is of weak type $(1,1)$ and bounded on $L^p(G)$ for $1 < p < \infty$, with
\[\|m(L)\|_{p \to p} \leq C_{p,q,\beta} \|m\|_{M_\epsilon B_{q,q}^\beta}.\]
\end{theo}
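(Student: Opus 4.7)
The plan is to deduce the theorem from the Calder\'on--Zygmund theory on the space of homogeneous type $(G, |\cdot|_\delta, \mu)$, equipped with the quasi-distance $d_\delta(x,z) = |z^{-1}x|_\delta$. Doubling is trivial because $\mu(\{x \tc |x|_\delta \leq r\}) = c\, r^{Q_\delta}$, and Proposition~\ref{prp:singularkernel} has already carried the analytic core of the argument by supplying both integral H\"ormander cancellation conditions on the convolution kernel. What remains is an $L^2$-bound together with the identification of the integral kernel.

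For the $L^2$-bound, the spectral theorem gives $\|m(L)\|_{L^2 \to L^2} \leq \|m\|_\infty$. Decomposing $m$ dyadically as in Lemma~\ref{lem:spectraldecomposition} reduces $\|m\|_\infty$ to $\sup_j \|m_j\|_\infty$, and since $\beta > s \geq n/q$ (by Proposition~\ref{prp:interpolatedweightedestimates}), the Besov embedding $B^\beta_{q,q}(\R^n) \hookrightarrow L^\infty(\R^n)$ together with \eqref{eq:supportdecomposition} yields
\[\|m\|_\infty \leq C_{q,\beta}\, \|m\|_{M_\epsilon B^\beta_{q,q}}.\]

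To identify the kernel, I would use the bi-invariance of Haar measure on the nilpotent group $G$ to rewrite $m(L)f = f * \breve m$ as $m(L)f(x) = \int_G K(x,z) f(z)\,dz$ with $K(x,z) = \breve m(z^{-1}x)$. By Proposition~\ref{prp:singularkernel}, $K$ is locally integrable off the diagonal. The substitution $y = z^{-1}x$, $h = z'^{-1}z$ (and the unimodularity of $G$) directly converts \eqref{eq:leftcz} into the H\"ormander condition in the second kernel variable,
\[\int_{|z^{-1}x|_\delta \geq 2|z'^{-1}z|_\delta} |K(x,z) - K(x,z')|\, dx \leq C\, \|m\|_{M_\epsilon B_{q,q}^\beta},\]
while the analogous substitution $y = z^{-1}x$, $h = x^{-1}x'$ converts \eqref{eq:rightcz} into the corresponding condition in the first variable, integrated in $z$.

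From here the Coifman--Weiss version of the Calder\'on--Zygmund theorem on spaces of homogeneous type yields the weak-type $(1,1)$ estimate; Marcinkiewicz interpolation with the $L^2$-bound then gives $L^p$-boundedness for $1 < p \leq 2$. For $2 \leq p < \infty$ I would invoke duality: by the spectral theorem the Hilbert-space adjoint is $m(L)^* = \overline{m}(L)$, and since complex conjugation preserves the seminorm $\|\cdot\|_{M_\epsilon B^\beta_{q,q}}$, the same argument applied to $\overline{m}$ bounds $m(L)^*$ on $L^{p'}$ and hence $m(L)$ on $L^p$. No serious obstacle is expected here: Proposition~\ref{prp:singularkernel} absorbs all the substantive analysis, and what remains is a routine application of the Calder\'on--Zygmund paradigm in the homogeneous-type setting.
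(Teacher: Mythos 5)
Your proposal is correct and follows essentially the same route as the paper: the $L^2$ bound via $\|m\|_{L^\infty(\sigma)} \leq C_{q,\beta}\|m\|_{M_\epsilon B^\beta_{q,q}}$ (using $\beta > s \geq n/q$), the two integral H\"ormander conditions of Proposition~\ref{prp:singularkernel} for the kernel $K(x,z)=\breve m(z^{-1}x)$, and Calder\'on--Zygmund theory on the space of homogeneous type $(G,|\cdot|_\delta,\mu)$ --- precisely the steps the paper delegates to the citation \cite[\S I.5, Theorem 3 and \S I.7.4(iii)]{stein_harmonic_1993}. The only cosmetic point is that your sup-norm bound should be read as a bound on the $\sigma$-essential (spectral) supremum, since the dyadic decomposition of Lemma~\ref{lem:spectraldecomposition} represents $m$ only off the origin, where $\sigma(\{0\})=0$.
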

\begin{proof}
Notice that $\|m\|_{L^\infty(\sigma)} \leq C_{q,\beta} \|m\|_{M_\epsilon B_{q,q}^\beta}$, since $\beta > n/p$ by Proposition~\ref{prp:interpolatedweightedestimates}. In view of Proposition~\ref{prp:singularkernel}, the conclusion then follows from the Calder\'on-Zygmund theory of singular integral operators \cite[\S I.5, Theorem 3 and \S I.7.4(iii)]{stein_harmonic_1993}.
\end{proof}

Notice that a compactly supported $m \in B_{q,q}^\beta(\R^n)$ does satisfy an $L^q$ Mihlin-H\"ormander condition of order $\beta$, at least for $\beta$ sufficiently large. More precisely, let $\tilde Q_\epsilon = \sum_j w_j/ \min_j w_j$ denote the \emph{normalized homogeneous dimension} associated with the dilations $\epsilon_t(\lambda) = (t^{w_1} \lambda_1,\dots,t^{w_n} \lambda_n)$; then we have

\begin{prp}\label{prp:mhcompactsupp}
If $K \subseteq \R^n$ is compact, $q \in [1,\infty]$, $\beta > \tilde Q_\epsilon/q$, then
\[\|m\|_{M_\epsilon B_{q,q}^\beta} \leq C_{K,q,\beta} \|m\|_{B_{q,q}^\beta}\]
for all $m \in B_{q,q}^\beta(\R^n)$ with $\supp m \subseteq K$.
\end{prp}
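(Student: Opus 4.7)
My plan is to prove the bound by splitting the parameter $t > 0$ into three regimes dictated by the compact support of $m$. After normalizing so $\min_j w_j = 1$ (so that $\tilde Q_\epsilon = \sum_j w_j$ equals the Lebesgue-growth exponent of $\epsilon_t$-balls), I first observe that $\supp \eta \subseteq \{|\lambda|_\epsilon \leq 2\}$ and $R := \sup_{\lambda \in K}|\lambda|_\epsilon < \infty$ together force $(m\circ\epsilon_t)\eta \equiv 0$ for $t > 2R$. For $t \in [1,2R]$, the map $\lambda \mapsto \epsilon_t \lambda$ is a smooth diffeomorphism with derivatives bounded uniformly on $\supp \eta$ in terms of $R$ alone, so standard diffeomorphism invariance of $B^\beta_{q,q}$ on a fixed compact set, combined with the fact that multiplication by $\eta \in \D(\R^n)$ is bounded on $B^\beta_{q,q}$, yields the desired uniform estimate on this range of $t$.

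The nontrivial regime is $t \in (0,1)$. Here I would change variables via $\mu = \epsilon_t \lambda$ to rewrite
\[
(m\circ\epsilon_t)\eta = (m \,\tilde\eta_t)\circ\epsilon_t, \qquad \tilde\eta_t(\mu) := \eta(\epsilon_{1/t}\mu),
\]
noting that $\tilde\eta_t$ is supported in the $\epsilon$-annulus $A_t = \{t/2 \leq |\mu|_\epsilon \leq 2t\}$, whose Lebesgue measure is $\sim t^{\tilde Q_\epsilon}$. The Besov norm will be estimated via the standard characterization
\[
\|f\|_{B^\beta_{q,q}}^q \sim \|f\|_{L^q}^q + \int_{|h|\leq 1} |h|^{-n-q\beta}\,\|\Delta^N_h f\|_{L^q}^q\, dh \qquad (N > \beta).
\]
The $L^q$ part is immediate, since by change of variables
\[
\|(m\tilde\eta_t)\circ\epsilon_t\|_{L^q}^q = t^{-\tilde Q_\epsilon}\|m\tilde\eta_t\|_{L^q}^q \leq C\,t^{-\tilde Q_\epsilon}\|m\|_\infty^q\,|A_t| \leq C\|m\|_\infty^q \leq C\|m\|_{B^\beta_{q,q}}^q,
\]
using the Sobolev embedding $B^\beta_{q,q} \hookrightarrow L^\infty$ (valid since $\beta > \tilde Q_\epsilon/q \geq n/q$ because $\tilde Q_\epsilon \geq n$).

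For the fractional difference integral, I expand $\Delta^N_h((m\tilde\eta_t)\circ\epsilon_t)$ via the discrete Leibniz rule, obtaining a sum of products of the form $\bigl[(\Delta^j_{\epsilon_t h} m)\circ\epsilon_t\bigr]\cdot\bigl[(\Delta^{N-j}_h \tilde\eta_t)\circ\epsilon_t\bigr]$. Each term is controlled by combining three inputs: the localization to (a translate of) $\epsilon_t(\supp\eta)$ which contributes a support factor $t^{\tilde Q_\epsilon/q}$ that compensates the dilation's $t^{-\tilde Q_\epsilon/q}$; the modulus-of-smoothness estimate $\omega_N(m,r) \lesssim r^\beta \|m\|_{B^\beta_{q,q}}$ derived from the Besov characterization and the monotonicity of $\omega_N$; and the pointwise bound $\|\Delta^{N-j}_h \tilde\eta_t\|_\infty \lesssim \min(1, (|h|/t)^{N-j})$ on differences of the rescaled cutoff, coming from smoothness of $\eta$. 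After splitting the $|h|$-integration into the dyadic ranges $|h| \lesssim t$ and $|h| \gtrsim t$, each summand is seen to be uniformly bounded in $t$, with the condition $\beta > \tilde Q_\epsilon/q$ entering precisely as the balance exponent that ensures convergence of the dominant term; the main obstacle is the careful bookkeeping of the various powers of $t$ and $|h|$ arising from this Leibniz expansion, and identifying the correct interpolation between the trivial $L^\infty$ estimate and the sharp Besov-smoothness bound on each difference of $m$.
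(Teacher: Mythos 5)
Your overall architecture — kill large $t$ by the compact support, handle $t$ in a compact interval by uniform diffeomorphism bounds, and for $t<1$ expand $\Delta^N_h\bigl((m\circ\epsilon_t)\eta\bigr)$ by the discrete Leibniz rule, letting the fixed support of $\eta$ compensate the Jacobian $t^{-\tilde Q_\epsilon/q}$ — is the same as the paper's (its Lemma~\ref{lem:smalldilations}), and the $L^q$ part and all Leibniz terms in which at least one difference falls on the cutoff can indeed be closed (for those you can even dispense with smoothness of $m$ for small $j$ by taking the order $N$ of differences larger than $\beta+\tilde Q_\epsilon/q$). The genuine gap is the top term, in which all $N$ differences fall on $m$. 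Your plan is to use the pointwise bound $\omega^N_q(m,r)\lesssim r^\beta\|m\|_{B^\beta_{q,q}}$ (this only uses the $B^\beta_{q,\infty}$ seminorm) and then "interpolate" with the trivial $L^\infty$ bound. This cannot work: inserting $\|\Delta^N_{\epsilon_t h}m\|_q\lesssim|\epsilon_t h|^\beta\|m\|\lesssim (t|h|)^\beta\|m\|$ into your weight gives the integrand $t^{-\tilde Q_\epsilon}\,|h|^{-n-q\beta}(t|h|)^{q\beta}=t^{q\beta-\tilde Q_\epsilon}|h|^{-n}$, whose $h$-integral over $|h|\le 1$ diverges logarithmically for every fixed $t$; the compensated $L^\infty$ bound carries no $|h|$-decay at all, so any geometric interpolation of the two, or a split of the $h$-integration at $|h|\sim t$, still leaves a divergent piece (the smoothness bound diverges at $h\to0$, the $L^\infty$ bound diverges as $t\to 0$ on $|h|\gtrsim t$). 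For this term you must keep the integral of $\|\Delta^N_{\epsilon_t h}m\|_q^q$ in $h$ (or a supremum over scales) intact rather than use any pointwise-in-$h$ bound.

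Moreover, even keeping that integral intact does not rescue your isotropic characterization in the anisotropic case, which is the whole point of the statement: substituting $h'=\epsilon_t h$ costs the full Jacobian $t^{-\tilde Q_\epsilon}$, while the isotropic weight only recovers $|h|^{-n-q\beta}\le t^{\,n+q\beta}|h'|^{-n-q\beta}$ (via $|h|\ge|h'|/t$ after normalizing $\min_j w_j=1$), so together with the external $t^{-\tilde Q_\epsilon}$ you need $n+q\beta\ge 2\tilde Q_\epsilon$, strictly stronger than the hypothesis $q\beta>\tilde Q_\epsilon$ whenever $\tilde Q_\epsilon>n$, i.e.\ whenever the dilations are genuinely anisotropic. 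The paper avoids both problems by using the modulus-of-continuity form $\int_0^1\bigl(r^{-\beta}\omega^N_q(r,\cdot)\bigr)^q\,dr/r$: for the top term the supremum defining $\omega^N_q$ transforms by the inclusion $\epsilon_t(\{|y|\le r\})\subseteq\{|z|\le t^{w_*}r\}$, so there is no Jacobian in the scale variable, and the hypothesis enters exactly as $t^{w_*\beta-Q_\epsilon/q}\le1$; the intermediate terms are handled there by H\"older with exponents $p_k=q\beta/k$ and the embeddings $B^\beta_{q,q}\subseteq B^k_{p_k,1}\subseteq W^k_{p_k}$. If you want to keep difference integrals, use differences along coordinate directions, so that each direction sees the one-parameter dilation $t^{w_j}$. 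Two minor points: the vanishing of $(m\circ\epsilon_t)\eta$ for large $t$ uses the \emph{inner} radius of the annulus supporting $\eta$, not the upper bound $|\lambda|_\epsilon\le 2$ you quote; and your displayed characterization must be taken in its supremum form when $q=\infty$.
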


Recall that $\|m\|_{M_\epsilon B_{q,q}^\beta} = \sup_{t > 0} \| (m \circ \epsilon_t) \,\eta \|_{B_{q,q}^\beta}$ for a suitable cut-off function $\eta \in \D(\R^n)$ supported away from the origin. If $\supp m \subseteq K$, then, for some $t_K > 0$ sufficiently large, we have $\|(m \circ \epsilon_t) \,\eta\|_{B_{q,q}^\beta} = 0$ for $t > t_K$; therefore Proposition~\ref{prp:mhcompactsupp} follows immediately from

\begin{lem}\label{lem:smalldilations}
If $p,q \in [1,\infty]$, $\beta > \tilde Q_\epsilon/p$ and $\eta \in \D(\R^n)$, then
\[\sup_{0 < t \leq 1} \| (f \circ \epsilon_t) \, \eta\|_{B_{p,q}^\beta} \leq C_{\eta,p,q,\beta} \|f\|_{B_{p,q}^\beta}.\]
\end{lem}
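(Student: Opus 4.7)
The plan is to use the characterization of the Besov norm via an $M$th order $L^p$ modulus of continuity,
\[
\|g\|_{B_{p,q}^\beta(\R^n)} \sim \|g\|_{L^p(\R^n)} + \left(\int_0^1 s^{-\beta q}\,\omega_M(g,s)_p^q\,\frac{ds}{s}\right)^{1/q},
\]
valid for any integer $M > \beta$, where $\omega_M(g,s)_p = \sup_{|h|\leq s}\|\Delta_h^M g\|_{L^p}$. By reparametrizing $t \mapsto t^{1/\min_j w_j}$ I may assume $\min_j w_j=1$, so that $\tilde Q_\epsilon = \sum_j w_j$ and $|\epsilon_t h| \leq t|h|$ whenever $t \leq 1$. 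I would then fix an integer $M > \beta + \tilde Q_\epsilon/p$. The $L^p$-part is immediate from the Sobolev embedding $B_{p,q}^\beta(\R^n) \hookrightarrow L^\infty(\R^n)$ (which holds since $\beta > \tilde Q_\epsilon/p \geq n/p$): $\|(f\circ\epsilon_t)\eta\|_{L^p} \lesssim \|\eta\|_\infty |\supp \eta|^{1/p} \|f\|_\infty \lesssim \|f\|_{B^\beta_{p,q}}$, so everything reduces to controlling the modulus-of-continuity integral.

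Applying the discrete Leibniz rule, $\Delta_h^M[(f\circ\epsilon_t)\eta]$ splits into $M+1$ terms proportional to $(\text{translated } \Delta_h^k(f\circ\epsilon_t)) \cdot (\text{translated } \Delta_h^{M-k}\eta)$. The identity $\Delta_h^k(f\circ\epsilon_t)=(\Delta_{\epsilon_t h}^k f)\circ\epsilon_t$ combined with the $L^p$ change of variable yields the ``smoothness bound''
\[
\|\Delta_h^k(f\circ\epsilon_t)\|_{L^p(E)} \leq t^{-\tilde Q_\epsilon/p}\|\Delta_{\epsilon_t h}^k f\|_{L^p(\R^n)} \leq t^{-\tilde Q_\epsilon/p}\,\omega_k(f, t|h|)_p
\]
on any bounded set $E$, while a pointwise $L^\infty$ estimate using $\|f\|_\infty \lesssim \|f\|_{B^\beta_{p,q}}$ gives the alternative ``trivial bound'' $\|\Delta_h^k(f\circ\epsilon_t)\|_{L^p(E)} \leq C_E \|f\|_{B^\beta_{p,q}}$, uniformly in $t$. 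Combined with $\|\Delta_h^{M-k}\eta\|_\infty \lesssim |h|^{M-k}$, each cross term ($0<k<M$) in the Leibniz expansion is controlled by $|h|^{M-k}$ times the minimum of these two bounds; the smoothness bound is moreover made quantitative by $\omega_k(f,u)_p \lesssim u^{\min(k,\beta)}\|f\|_{B^\beta_{p,q}}$, elementary for $k \leq \beta$ and Marchaud's inequality for $k > \beta$.

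The dominant $k=M$ term contributes $t^{-\tilde Q_\epsilon/p}\omega_M(f, ts)_p$ to $\omega_M((f\circ\epsilon_t)\eta, s)_p$; the substitution $u=ts$ in the Besov integral produces a prefactor $t^{q(\beta-\tilde Q_\epsilon/p)} \leq 1$ and an integral bounded by $\|f\|_{B^\beta_{p,q}}^q$, while the $k=0$ term produces only $s^M\|f\|_{B^\beta_{p,q}}$, which is harmless since $M>\beta$. The main technical obstacle is the bookkeeping for the cross terms: neither the smoothness bound nor the trivial bound is individually uniform in $t \leq 1$, and one must split each resulting Besov integral at the breakpoint $|h| = t^{(\tilde Q_\epsilon/p-k)/k}$ (where the two bounds balance), and then verify, using $M > \beta + \tilde Q_\epsilon/p$ together with $\beta > \tilde Q_\epsilon/p$, that every resulting power of $t$ is non-negative. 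This gives the uniform bound on $\|(f\circ\epsilon_t)\eta\|_{B^\beta_{p,q}}$ for $0 < t \leq 1$.
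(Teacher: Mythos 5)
Your argument is correct in substance and shares the paper's skeleton --- the characterization of $B^\beta_{p,q}$ by $L^p$ moduli of continuity, the discrete Leibniz rule, the uniform $L^\infty$ bound coming from $B^\beta_{p,q}\hookrightarrow L^\infty$, and the identity $\Delta^k_h(f\circ\epsilon_t)=(\Delta^k_{\epsilon_t h}f)\circ\epsilon_t$ with the change of variables --- but it handles the intermediate Leibniz terms by a genuinely different mechanism. The paper keeps the order of differences minimal ($m-1<\beta<m$, assuming $\beta$ non-integer and recovering integer $\beta$ by interpolation) and kills the dangerous factor term by term: the $k$-th difference of $f\circ\epsilon_t$ is measured in a larger exponent $p_k=p\beta/k$, using $B^\beta_{p,q}\subseteq B^k_{p_k,1}\subseteq W^k_{p_k}$, chosen exactly so that $w_*k-Q_\epsilon/p_k=\frac{w_*k}{\beta}(\beta-\tilde Q_\epsilon/p)>0$; each term is then uniformly bounded in $t\le 1$ with no splitting of the integral. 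You instead keep the single exponent $p$, accept the full factor $t^{-\tilde Q_\epsilon/p}$, and compensate by taking the difference order larger, $M>\beta+\tilde Q_\epsilon/p$, interpolating between the smoothness bound and the trivial $L^\infty$-based bound and splitting the integral at the $t$-dependent breakpoint $t^{(\tilde Q_\epsilon/p-k)/k}$; your bookkeeping closes precisely because the terms that need splitting have $k<\tilde Q_\epsilon/p<M-\beta$. What each route buys: yours avoids the varying-exponent trick and the non-integer-$\beta$/interpolation detour and is arguably more elementary, at the price of a higher-order modulus and heavier case analysis; the paper's is shorter once the choice $p_k=p\beta/k$ is seen, and works with the minimal order of differences.

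One inaccuracy to patch: the estimate $\omega_k(f,u)_p\lesssim u^{\min(k,\beta)}\|f\|_{B^\beta_{p,q}}$ fails in the corner case $k=\beta\in\N$ for large $q$, since it would require $B^\beta_{p,q}\subseteq W^\beta_p$ (false in general; compare the Zygmund class with the Lipschitz class for $p=q=\infty$, $\beta=1$). This does not affect the architecture: for $k\ge\beta$ use instead $\omega_k(f,u)_p\lesssim u^{\beta-\delta}\|f\|_{B^\beta_{p,q}}$ with $0<\delta<\min\{1,\beta-\tilde Q_\epsilon/p\}$; the strict inequalities $\beta>\tilde Q_\epsilon/p$ and $k<M$ absorb the loss, and no splitting is needed for such $k$.
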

\begin{proof}
Without loss of generality, we may suppose that $\beta$ is not an integer (the missing values can be recovered a posteriori by interpolation), thus there exists $m \in \N$ such that $m-1 < \beta < m$.

Define the $k$-th order difference operator $\Delta^k_y$ and the modulus of continuity $\omega^k_p$ as in \cite[\S 6.2]{bergh_interpolation_1976}. In view of the characterization by differences of the Besov norm given in \cite[Theorem 6.2.5]{bergh_interpolation_1976}, we have to estimate
\begin{equation}\label{eq:differencesnorm}
\| (f \circ \epsilon_t) \, \eta \|_p + \left( \int_0^1 \left(\frac{\omega_p^m(r, (f \circ \epsilon_t) \eta)}{r^\beta}\right)^q \frac{dr}{r} \right)^{1/q}.
\end{equation}
%(in fact, the ``missing'' integral on $\left]1,+\infty\right[$ in the latter summand is easily controlled by the former).

The former summand in \eqref{eq:differencesnorm} is immediately majorized by H\"older's inequality and embeddings, since $\eta$ is compactly supported and $\beta > n/p$:
\[ \| (f \circ \epsilon_t) \, \eta \|_p \leq C_{\eta,p} \| f \circ \epsilon_t \|_\infty = C_{\eta,p} \|f\|_\infty \leq C_{\eta,p,q,\beta} \|f\|_{B_{p,q}^\beta}.\]

For the latter summand, notice first that
\begin{equation}\label{eq:valormedio}
\|\Delta^k_y \psi\|_p \leq C_{k,p} \|\psi\|_{W^k_p} |y|^k;
\end{equation}
this inequality, together with the Leibniz rule for finite differences, H\"older's inequality and the fact that $\eta \in \D(\R^n)$, gives easily
\[ \| \Delta_y^m ((f \circ \epsilon_t) \, \eta) \|_p \leq C_{\eta,m,p,p_0,\dots,p_m} \sum_{k=0}^m |y|^{m-k} \| \Delta_y^k (f \circ \epsilon_t) \|_{p_k}\]
for any choice of $p_0,\dots,p_k \geq p$; since
\[ \| \Delta_y^k (f \circ \epsilon_t) \|_{p_k} = t^{-Q_\epsilon/p_k}  \| \Delta_{\epsilon_t(y)}^k f \|_{p_k} \qquad\text{and}\qquad |\epsilon_t(y)|_\infty \leq t^{w_*} |y|_\infty\]
for $t \leq 1$, where $w_* = \min\{w_1,\dots,w_n\}$, we then get also
\[ \omega_p^m(r,(f \circ \epsilon_t) \eta) \leq C_{\eta,m,p,p_0,\dots,p_m} \sum_{k=0}^m r^{m-k} t^{-Q_\epsilon/p_k} \omega_{p_k}^k (t^{w_*} r, f).\]
Choose now $p_m = p$, and $p_k = p \beta/k$ for $k < m$. Then, for $k < m$, we have
\[p_k > p, \quad k - \frac{n}{p_k} = \frac{k}{\beta} \left( \beta - \frac{n}{p} \right) < \beta - \frac{n}{p}, \quad  w_* k - \frac{Q_\epsilon}{p_k} = \frac{w_* k}{\beta} \left( \beta - \frac{\tilde Q_\epsilon}{p} \right) > 0,\]
so that, by \eqref{eq:valormedio} and the embeddings $B_{p,q}^\beta \subseteq B_{p_k,1}^k \subseteq W^{k}_{p_k}$,
%\[r^{m-k} t^{-Q_\epsilon/p_k} \omega_{p_k}^k (t^{w_*} r, f) \leq C_{k,p_k} r^m t^{w_* k -Q_\epsilon/p_k} \|f\|_{W^{k}_{p_k}} \leq C_{p,q,\beta} r^{m} \|f\|_{B_{p,q}^\beta},\]
%thus
\[\frac{r^{m-k} t^{-Q_\epsilon/p_k} \omega_{p_k}^k (t^{w_*} r, f)}{r^\beta} \leq C_{p,q,\beta} r^{m-\beta} \|f\|_{B_{p,q}^\beta}.\]
For $k=m$, instead,
\[\frac{t^{-Q_\epsilon/p} \omega_{p}^m (t^{w_*} r, f)}{r^\beta} = t^{w_* \beta -Q_\epsilon/p} \frac{\omega_{p}^m (t^{w_*} r, f)}{(t^{w_*}r)^\beta} \leq \frac{\omega_{p}^m (t^{w_*} r, f)}{(t^{w_*}r)^\beta}.\]
Putting all together, the latter summand in \eqref{eq:differencesnorm} is majorized by
\[ C_{\eta,p,q,\beta} \left( \|f\|_{B_{p,q}^\beta} \left(\int_0^1 (r^{m-\beta})^q \frac{dr}{r}\right)^{1/q} + \left(\int_0^{w_*} \left(\frac{\omega_{p}^m (r, f)}{r^\beta}\right)^q \frac{dr}{r} \right)^{1/q} \right),\]
and the conclusion follows again by \cite[Theorem 6.2.5]{bergh_interpolation_1976}.
\end{proof}

\section{Marcinkiewicz multipliers}\label{section:marcinkiewicz}

Let $G$ be a homogeneous Lie group, with automorphic dilations $\delta_t$ and homogeneous dimension $Q_\delta$. For $w \in L^1(G)$, we define the \emph{maximal operator} $M_w \phi(x) = \sup_{t > 0} |\phi * (t^{-Q_\delta} w \circ \delta_{t^{-1}})(x)|$ %= \sup_{t > 0} \left| \int_G \phi(x \,\delta_t(y)^{-1}) \, w(y) \,dy \right|
.
We say that the function $w$ is \emph{M-admissible}\index{function!M-admissible} if $M_w$ is bounded on $L^p(G)$ for $1 < p < \infty$.

In terms of maximal operators, we formulate the following hypothesis about the homogeneous group $G$ and a chosen homogeneous weighted subcoercive system $L_1,\dots,L_n$ on it:
\begin{quote}
{\bf hypothesis \HPK{s}{d}:} for every $\beta > s$ there exist
\begin{itemize}
\item a Borel function $u_\beta$ on $G$ with $u_\beta = u_\beta^*$ and $u_\beta \geq c \langle \cdot \rangle_G^{-\theta}$ for some $c,\theta > 0$,
\item a positive regular Borel measure $\sigma_\beta$ on $\R^n$, which is locally $d$-bounded on $\R^n \setminus \{0\}$,
\item a non-negative real number $\gamma_\beta < 2\beta$,
\end{itemize}
such that
\begin{itemize}
\item the function $\langle \cdot \rangle_G^{-\gamma_\beta} u_\beta$ is M-admissible, and
\item for every compact $K \subseteq \R^n \setminus \{0\}$ and every $m \in \D(\R^n)$ with $\supp m \subseteq K$, we have
\begin{equation}\label{eq:quasiplancherel}
\|\breve m\|_{L^2(G, u_\beta^{-1}(x) \,dx)} \leq C_{K,\beta} \|m\|_{L^2(\sigma_\beta)}.
\end{equation}
\end{itemize}
\end{quote}

%The next proposition shows how this quite technical hypothesis can be verified by the techniques and results of the previous sections.

\begin{prp}\label{prp:HPKsatisfaction}
Let $G$ be a homogeneous group, with degree of polynomial growth $Q_G$, and let $L_1,\dots,L_n$ be a homogeneous weighted subcoercive system on $G$.
\begin{itemize}
\item[(i)] The hypothesis \HPK{Q_G/2}{1} holds. More generally, if the Plancherel measure $\sigma$ is locally $d$-bounded on $\R^n \setminus \{0\}$, then \HPK{Q_G/2}{d} holds.
\item[(ii)] If $G$ is $h$-capacious, then \HPK{(Q_G-h)/2}{1} holds.
\end{itemize}
\end{prp}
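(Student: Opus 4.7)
The plan is to exhibit, in each case, choices of $u_\beta$, $\sigma_\beta$, $\gamma_\beta$ satisfying the four conditions of \HPK{s}{d}. For part~(i), the minimal choice $u_\beta \equiv 1$, $\sigma_\beta = \sigma$ already suffices: the inequality \eqref{eq:quasiplancherel} becomes an equality by the Plancherel formula, the relations $u_\beta = u_\beta^*$ and $u_\beta \geq \langle\cdot\rangle_G^0$ are trivial, and by \eqref{eq:homogeneity} the Plancherel measure $\sigma$ is $\epsilon_t$-homogeneous of positive degree $Q_\delta$, hence locally $1$-bounded on $\R^n\setminus\{0\}$ by the remark preceding Lemma~\ref{lem:triebeltrace} (while the stronger local $d$-boundedness is granted by hypothesis in the general claim). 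The only nontrivial point is to pick $\gamma_\beta \in [0,2\beta)$ for which $\langle\cdot\rangle_G^{-\gamma_\beta}$ is M-admissible; since $\beta > Q_G/2$, any $\gamma_\beta \in (Q_G,2\beta)$ will do, because \eqref{eq:moduluscomparison} shows that $\langle\cdot\rangle_G^{-\gamma_\beta}$ is dominated by a radially decreasing $L^1$ function of the homogeneous norm $|\cdot|_\delta$, from which M-admissibility follows by standard Hardy--Littlewood theory on homogeneous groups.

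For part~(ii), I would feed the $h$-capacious data into Corollary~\ref{cor:partialweight}. Let $\omega_1,\dots,\omega_h\in(\lie g/\lie z)^*$ and $z_1,\dots,z_h\in\lie z$ be furnished by the definition, set $w_j(\bar x)=|\omega_j(\bar x)|$, $\zeta_j(\tau)=|\tau(z_j)|$, and $\tilde w_j(x)=1+w_j(P(x))$, and fix parameters $\gamma_1,\dots,\gamma_h\in(0,1/2)$, to be tuned later. Define
\[u_\beta(x) = \prod_{j=1}^h \tilde w_j(x)^{-2\gamma_j}, \quad \tilde\zeta_{\vec\gamma}(\tau) = \prod_{j=1}^h (1+\zeta_j(\tau)^{-2\gamma_j}), \quad \sigma_\beta = \sigma_{\tilde\zeta_{\vec\gamma}}.\]
Then \eqref{eq:quasiplancherel} is precisely Corollary~\ref{cor:partialweight}. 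The identity $u_\beta = u_\beta^*$ follows from $\Delta\equiv 1$ (nilpotency) combined with Lemma~\ref{lem:homogeneousdualbasis}: since each $\omega_j\circ P$ vanishes on $[\lie g,\lie g]$ and $P(x^{-1})=-P(x)$ modulo $[\lie g,\lie g]$ in exponential coordinates, each $\tilde w_j$ is invariant under inversion. The lower bound $u_\beta \geq c\langle\cdot\rangle_G^{-\theta}$ follows from the polynomial control $\tilde w_j \leq C\langle\cdot\rangle_G^{\theta_j}$. For $\sigma_\beta$: linear independence of $z_1,\dots,z_h$ together with $2\gamma_j<1$ yields $\tilde\zeta_{\vec\gamma}\in L^1_\loc(\lie z^*)$, so by Lemma~\ref{lem:homogeneouspushforward}(i) $\sigma_\beta$ is a regular Borel measure; expanding the product into $2^h$ summands and applying Lemma~\ref{lem:homogeneouspushforward}(ii) to each shows that $\sigma_\beta$ is a finite sum of $\epsilon_t$-homogeneous measures of positive degrees, hence locally $1$-bounded on $\R^n\setminus\{0\}$.

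It remains to choose $\gamma_\beta<2\beta$ so that $\langle\cdot\rangle_G^{-\gamma_\beta}u_\beta$ is M-admissible. Mimicking the bookkeeping at the end of the proof of Theorem~\ref{thm:improvedl1estimates}, pick a homogeneous basis $v_1,\dots,v_k$ of $\lie g$ compatible with the descending central series such that $\hat v_j = \omega_j\circ P$ for $j\leq h$ (so $\kappa_j=1$ for these); for any nonnegative splitting $\sum_j \alpha_j = \gamma_\beta$ we have
\[\langle x\rangle_G^{-\gamma_\beta}\prod_{j=1}^h \tilde w_j(x)^{-2\gamma_j} \leq C\prod_{j=1}^h (1+|\hat v_j(x)|)^{-(\alpha_j+2\gamma_j)}\prod_{j=h+1}^k (1+|\hat v_j(x)|^{1/\kappa_j})^{-\alpha_j}.\]
Since $\beta>(Q_G-h)/2$, we may select $\gamma_\beta\in(Q_G-h,2\beta)$; then, letting the $\gamma_j$ approach $1/2$ and distributing the $\alpha_j$ accordingly, we can arrange $\alpha_j+2\gamma_j>1$ for $j\leq h$ and $\alpha_j>\kappa_j$ for $j>h$, so that the right-hand side becomes a tensor product (in the coordinates $\hat v_1,\dots,\hat v_k$) of one-dimensional radially decreasing $L^1$ weights compatible with the dilation structure.

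The main obstacle I anticipate is then passing from this tensor-product majorization to the $L^p$-boundedness of the non-abelian convolution maximal operator $M_w$. The plan is to dominate $M_w$ by an iterated coordinate-wise maximal operator in the $\hat v_1,\dots,\hat v_k$ directions: the fact that $\omega_1\circ P,\dots,\omega_h\circ P$ vanish on $[\lie g,\lie g]$ (whence $\kappa_j=1$ for $j\leq h$) and the $\delta_t$-equivariance of the chosen basis make such a reduction possible, while the symmetry $u_\beta = u_\beta^*$ ensures that the one-dimensional pieces are genuine Hardy--Littlewood operators. Once the reduction is in place, the $L^p$-boundedness of each factor is classical and composing them yields the required M-admissibility; this is essentially the generalization, to the abstract $h$-capacious framework, of the argument carried out by Hebisch and Zienkiewicz in the H-type setting.
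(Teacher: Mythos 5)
Your choices of $u_\beta$, $\sigma_\beta$, $\gamma_\beta$ and the way you feed them into Corollary~\ref{cor:partialweight}, Lemma~\ref{lem:homogeneousdualbasis} and Lemma~\ref{lem:homogeneouspushforward} coincide with the paper's proof; the genuine gap is in the M-admissibility step, in both parts. In part (i), the claim that $\langle\cdot\rangle_G^{-\gamma_\beta}$ with $\gamma_\beta\in\left]Q_G,2\beta\right[$ is ``dominated by a radially decreasing $L^1$ function of the homogeneous norm $|\cdot|_\delta$'' is false for a general homogeneous group. By \cite[Proposition 2.1]{martini_spectral}, $\langle\cdot\rangle_G\sim 1+|\cdot|_{\tilde\delta}$ for the (in general non-automorphic) dilations $\tilde\delta_t$ adapted to the descending central series, and these are \emph{not} the automorphic dilations $\delta_t$ entering the definition of $M_w$; the inequality \eqref{eq:moduluscomparison} only yields $\langle x\rangle_G^{-\gamma_\beta}\leq C(1+|x|_\delta)^{-\gamma_\beta/a}$, and $\gamma_\beta/a$ can be far below $Q_\delta$, so no integrable $\delta$-radial majorant exists. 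Concretely, on the Heisenberg group with $\delta_t(X)=tX$, $\delta_t(Y)=t^aY$, $\delta_t(T)=t^{1+a}T$ and $a>1$ one has $Q_G=4<Q_\delta=2+2a$, and along the $X$-direction the weight decays only like $|x|^{-\gamma_\beta}$ with $\gamma_\beta$ barely above $4$, which is incompatible with any integrable $\delta$-radially decreasing profile. Your argument works only when $Q_\delta=Q_G$, i.e.\ essentially in the stratified case. The paper handles the general case by decomposing into $\tilde\delta$-dyadic shells and dominating $M_{w_\beta}$ by the strong maximal operator $M_{\strong}$ attached to a basis of simultaneous eigenvectors of the two commuting families $\delta_t$ and $\tilde\delta_t$, whose $L^p$-boundedness is Christ's theorem \cite[Theorem 2.1]{christ_strong_1992}.

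The same missing ingredient reappears, in aggravated form, in part (ii): your ``plan'' to dominate $M_{w_\beta}$ by an iterated coordinate-wise maximal operator in the directions $\hat v_1,\dots,\hat v_k$ is not a proof, and the naive iteration does not go through, because convolution on the non-abelian group $G$ does not factor along these coordinates and one-dimensional Hardy--Littlewood operators along non-central directions do not simply compose. What is actually needed --- and what the paper invokes at exactly this point --- is the $L^p$-boundedness ($1<p<\infty$) of the multi-parameter strong maximal function associated with the $k$-parameter dilations $\daleth_{\vec{t}}(v_j)=t_jv_j$, again \cite[Theorem 2.1]{christ_strong_1992}; this is the nontrivial content that your sketch defers to ``the generalization of Hebisch--Zienkiewicz''. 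Your tensor-product majorization of $\langle\cdot\rangle_G^{-\gamma_\beta}u_\beta$ with all exponents greater than $1$ is correct and matches the paper's bookkeeping (with $\eta_j=2\gamma_j<1$), as are the verifications of $u_\beta=u_\beta^*$, the lower bound $u_\beta\geq c\langle\cdot\rangle_G^{-\theta}$, and the local $1$-boundedness of $\sigma_\beta$; but until the strong-maximal-function input is supplied, the M-admissibility claims, and hence the verification of \HPK{Q_G/2}{d} and \HPK{(Q_G-h)/2}{1}, remain unproved.
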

\begin{proof}
(i) Let $\sigma$ be the Plancherel measure associated with the system $L_1,\dots,L_n$. For $\beta > Q_G/2$, we choose $u_\beta \equiv 1$, $\sigma_\beta = \sigma$. By \eqref{eq:homogeneity}, $\sigma$ is $\delta_t$-homogeneous, so that it is locally $1$-bounded on $\R^n \setminus \{0\}$. Therefore, by the Plancherel formula, in order to conclude, it is sufficient to show that, for $\gamma_\beta \in \left]Q_G,2\beta\right[$, the function $w_\beta = \langle \cdot \rangle_G^{-\gamma_\beta}$ is M-admissible.

The ideals composing the descending central series \eqref{eq:descendingcentral} are characteristic and thus $\delta_t$-invariant, hence we can find $\delta_t$-invariant complements $V_k$ of $\lie{g}_{[k+1]}$ in $\lie{g}_{[k]}$. The dilations $\tilde\delta_t$ of $\lie{g}$ defined by $\tilde\delta_t(x) = t^k x$ for $x \in V_k$ in general are not automorphic, but commute with the $\delta_t$, and moreover, by \cite[Proposition 2.1]{martini_spectral}, if $|\cdot|_{\tilde\delta}$ is a $\tilde\delta_t$-homogeneous norm, then $\langle \cdot \rangle_G \sim 1+|\cdot|_{\tilde\delta}$. We then have
\begin{multline*}
\left| \int_G \phi(x \, \delta_t(y)^{-1} ) \, w_\beta(y) \,dy \right| = \left|\int_{|y|_{\tilde\delta} \leq 1} + \sum_{h \geq 1} \int_{2^{h-1} < |y|_{\tilde\delta} \leq 2^h}\right|\\
%\leq C_{\gamma_\beta} \sum_{h \geq 0} 2^{-h\gamma_\beta} \int_{|y|_{\tilde\delta} \leq 2^h} |\phi(x \,\delta_t(y)^{-1})| \,dy \\
\leq C_{\gamma_\beta} \sum_{h \geq 0} 2^{-h(\gamma_\beta-Q_G)} \int_{|y|_{\tilde\delta} \leq 1} |\phi(x \,\delta_t(\tilde\delta_{2^h}(y))^{-1})| \,dy.
\end{multline*}
Since $\gamma_\beta > Q_G$, if $M_\strong$ is the strong maximal function on $G$ associated to a basis of simultaneous eigenvectors of the $\delta_t$ and the $\tilde\delta_t$ \cite[\S2]{christ_strong_1992}, we obtain
\[M_{w_\beta} \phi \leq C_{\gamma_\beta} M_{\strong} \phi,\]
which gives the conclusion by \cite[Theorem 2.1]{christ_strong_1992}.

(ii) Let $\lie{z}$ be the center of $\lie{g}$, and $P : \lie{g} \to \lie{g}/\lie{z}$ be the canonical projection. Let $\omega_1,\dots,\omega_h \in (\lie{g}/\lie{z})^*$ and $z_1,\dots,z_h \in \lie{z}$ be given by the definition of $h$-capacious. By Lemma~\ref{lem:homogeneousdualbasis}, there exists a homogeneous basis $v_1,\dots,v_k$ of $\lie{g}$ compatible with the descending central series such that, if $\hat v_1,\dots,\hat v_k$ is the dual basis, then $\omega_j \circ P = \hat v_j$ for $j=1,\dots,h$. Moreover, if we set $\kappa_j = \max \{r : v_j \in \lie{g}_{[r]}\}$, then $\kappa_j = 1$ for $j=1,\dots,h$ and
\begin{equation}\label{eq:nilpotentgrowth}
Q_G = \textstyle\sum_{j=1}^k \kappa_j, \qquad \langle x \rangle_G \sim 1 + \sum_{j=1}^k |\hat v_j(x)|^{1/\kappa_j}
\end{equation}
by \cite[Proposition 2.1]{martini_spectral}.

Let $\daleth_{\vec{t}}$ be the $k$-parameter family of dilations on $\lie{g}$ given by $\daleth_{\vec{t}}(v_j) = t_j v_j$. Clearly the $\daleth_{\vec{t}}$ are in general not automorphisms, but the automorphic dilations $\delta_t$ can be obtained as a particular case: $\delta_t = \daleth_{(t^{b_1},\dots,t^{b_k})}$, where $b_j$ is the $\delta_t$-homogeneous degree of $v_j$.

If $\beta > (Q_G-h)/2$, then $2\beta > Q_G - h = \sum_{j=h+1}^k \kappa_j$, so that we can find $\eta_{\beta,1},\dots,\eta_{\beta,h} \in \left[0,1\right[$ and $\gamma_{\beta,1},\dots,\gamma_{\beta,k} > 0$ such that
%\[2\beta > \sum_{j=1}^h (1-\eta_{\beta,j}) + \sum_{j=h+1}^k \kappa_j,\]
%and then also $\gamma_{\beta,1},\dots,\gamma_{\beta_k}$ such that
\[2\beta > \gamma_\beta = \sum_{j=1}^k \gamma_{\beta,j}, \qquad \gamma_{\beta,j} > \begin{cases}
1 - \eta_{\beta,j} &\text{for $j=1,\dots,h$,}\\
\kappa_j &\text{for $j=h+1,\dots,k$.}
\end{cases}\]

Let now $\sigma'$ be the Plancherel measure on $\R^n \times \lie{z}^*$ associated to the system $L_1,\dots,L_n$ extended with the central derivatives, as in \S\ref{section:metivier}, and let $\sigma_\beta$ be the push-forward of the measure $\prod_{j=1}^h (1+|\tau(z_j)|^{-\eta_{\beta,j}}) \,d\sigma'(\lambda,\tau)$ via the canonical projection on the first factor of $\R^n \times \lie{z}^*$. By Lemma~\ref{lem:homogeneouspushforward}, since $\eta_{\beta,1},\dots,\eta_{\beta,h} < 1$, the measure $\sigma_{\beta}$ is a regular Borel measure on $\R^n$; moreover, since the $z_j$ are $\delta_t$-homogeneous, $\sigma_{\beta}$ is the sum of $\epsilon_t$-homogeneous regular Borel measures of different degrees (where $\epsilon_t$ are the dilations associated with the system $L_1,\dots,L_n$), and consequently $\sigma_\beta$ is locally $1$-bounded on $\R^n \setminus \{0\}$. Finally, if we set
$u_\beta(x) = \prod_{j=1}^h (1+|\omega_j(P(x))|)^{-\eta_j} = \prod_{j=1}^h (1+|\hat v_j(x)|)^{-\eta_j}$,
then $u_{\beta} = u_{\beta}^*$ and, by \eqref{eq:nilpotentgrowth}, $u_\beta^{-1}$ is dominated by some power of $\langle \cdot \rangle_G$; moreover, by Corollary~\ref{cor:partialweight}, for every compact $K \subseteq \R^n \setminus \{0\}$ and every $m \in \D(\R^n)$ with $\supp m \subseteq K$, we have \eqref{eq:quasiplancherel}.

In order to conclude, we must show that $w_\beta = \langle \cdot \rangle_G^{-\gamma_\beta} u_\beta$ is M-admissible. In fact, again by \eqref{eq:nilpotentgrowth},
\[w_\beta(x) \leq C_\beta \, {\textstyle\prod_{j=1}^h} (1+|\hat v_j(x)|)^{-(\gamma_{\beta,j} + \eta_{\beta,j})} \, {\textstyle\prod_{j=h+1}^k} (1+|\hat v_j(x)|)^{-\gamma_{\beta,j}/\kappa_j},\]
and the exponents $\gamma_{\beta,j} + \eta_{\beta,j}$, $\gamma_{\beta,j}/\kappa_j$ are all greater than $1$ by construction. The conclusion then follows as in part (i), but with a multi-variate decomposition, by \cite[Theorem 2.1]{christ_strong_1992} applied to the multi-parameter dilations $\daleth_{\vec t}$.
\end{proof}

Suppose now that, for $l=1,\dots,\ell$, $G_l$ is a homogeneous Lie group, with dilations $(\delta_{l,t})_{t > 0}$, and that $L_{l,1},\dots,L_{l,n_l}$ is a homogeneous weighted subcoercive system on $G_l$. Set $G^\times = G_1 \times \dots \times G_\ell$,
and let $L^\times_{l,j} \in \Diff(G^\times)$ be defined by
\[L^\times_{l,j}(f_1 \otimes \dots \otimes f_\ell) = f_1 \otimes \dots \otimes (L_{l,j} f_j) \otimes \dots \otimes f_\ell\]
for $l=1,\dots,\ell$, $j=1,\dots,n_l$. By \cite[Corollary 5.5]{martini_spectral}, we know that
\[L_{1,1}^\times,\dots,L_{1,n_1}^\times,\dots,L_{\ell,1}^\times,\dots,L_{\ell,n_\ell}^\times\]
is a homogeneous weighted subcoercive system on $G^\times$.

We then show how the hypotheses on the factor groups $G_l$ can be put together in order to obtain weighted estimates on the product group $G^\times$. In the following, inequalities involving vectors are to be read componentwise.

\begin{prp}\label{prp:productestimates}
Suppose that, for $l=1,\dots,\ell$, the homogeneous group $G_l$, with the system $L_{l,1},\dots,L_{l,n_l}$, satisfies \HPK{s_l}{d_l}. For $q \in [1,\infty]$, if
\[\vec{\beta} > \vec{s} + \frac{\vec{n}}{q} - \frac{\vec{d}}{\max\{2,q\}},\]
where $\vec{s} = (s_1,\dots,s_\ell)$, $\vec{d} = (d_1,\dots,d_\ell)$, then there exists
\[w_{\vec{\beta}} = w_{\vec{\beta},1} \otimes \dots \otimes w_{\vec{\beta},\ell} \in L^1(G^\times),\]
with $w_{\vec{\beta}} > 0$, $w_{\vec{\beta}}^* = w_{\vec{\beta}}$, such that $w_{\vec{\beta},l}$ is M-admissible on $G_l$ for $l=1,\dots,\ell$, and moreover, for every compact $K = \prod_{l=1}^\ell K_l \subseteq \prod_{l=1}^\ell (\R^{n_l} \setminus \{0\})$, and for every $m \in S^{\vec{\beta}}_{p,q}B(\R^{\vec{n}})$ with $\supp m \subseteq K$, we have
\[\|\Kern_{L^\times} m\|_{L^2(G^\times, w_{\vec{\beta}}^{-1}(x) \,dx)} \leq C_{K,\vec{\beta},p,q} \|m\|_{S^{\vec{\beta}}_{p,q}B(\R^{\vec{n}})}.\]
\end{prp}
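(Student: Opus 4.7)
The plan is to (a) construct the weight $w_{\vec\beta}$ factorwise from the data given by the hypotheses \HPK{s_l}{d_l}, (b) prove a one-factor weighted $L^2$ estimate in the required Besov scale, and (c) iterate the factor estimate across the $\ell$ factors by exploiting the Hilbert structure of $L^2$ together with the tensor-product characterisation of the dominating-mixed-smoothness space $S^{\vec\beta}_{p,q}B$.

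For step (a), for each $l$ I pick $\beta'_l>s_l$ sufficiently close to $s_l$ (how close being dictated by the margin $\vec\beta-\vec s-\vec n/q+\vec d/\max\{2,q\}>0$) and apply \HPK{s_l}{d_l} to obtain $u_{\beta'_l}$, $\sigma_{\beta'_l}$ and $\gamma_{\beta'_l}<2\beta'_l$. I set
\[w_{\vec\beta,l}=\langle\cdot\rangle_{G_l}^{-\gamma_{\beta'_l}}\,u_{\beta'_l},\qquad w_{\vec\beta}=w_{\vec\beta,1}\otimes\dots\otimes w_{\vec\beta,\ell}.\]
By the hypothesis, $w_{\vec\beta,l}$ is M-admissible (hence in $L^1(G_l)$), positive, and symmetric (inherited from $u_{\beta'_l}$ and $\langle\cdot\rangle_{G_l}$); the tensor product carries these properties over to $G^\times$.

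For step (b), I aim to show that, for compact $K_l\subseteq\R^{n_l}\setminus\{0\}$ and $\beta_l>s_l+n_l/q-d_l/\max\{2,q\}$,
\[\|\Kern_{L_l}m\|_{L^2(G_l,\,w_{\vec\beta,l}^{-1})}\leq C_{K_l,\vec\beta,q}\,\|m\|_{B_{q,q}^{\beta_l}(\R^{n_l})}\qquad(\supp m\subseteq K_l).\]
This follows by interpolating two endpoint inequalities. At one endpoint, combining \eqref{eq:quasiplancherel} with the trace Lemma~\ref{lem:triebeltrace} (applied to $\sigma_{\beta'_l}$, which is locally $d_l$-bounded on $\R^{n_l}\setminus\{0\}$) and standard Besov embeddings yields $\|\Kern_{L_l}m\|_{L^2(u_{\beta'_l}^{-1})}\leq C\|m\|_{B_{q,q}^{n_l/q-d_l/\max\{2,q\}+\varepsilon}}$. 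At the other, the lower bound $u_{\beta'_l}\geq c\langle\cdot\rangle_{G_l}^{-\theta}$ gives $\langle x\rangle^{2N}u_{\beta'_l}^{-1}\leq C\langle x\rangle^{2N+\theta}$, so Theorem~\ref{thm:weightedestimates} applied with $p=2$ (using that the Plancherel measure of the system on $G_l$ is $\epsilon_t$-homogeneous, hence locally $1$-bounded on $\R^{n_l}\setminus\{0\}$) produces
\[\|\Kern_{L_l}m\|_{L^2(\langle\cdot\rangle^{2N}u_{\beta'_l}^{-1})}\leq C_N\,\|m\|_{B_{q,q}^{N+\theta/2+n_l/q-1/\max\{2,q\}+\varepsilon}}.\]
Complex interpolation of weighted $L^2$ spaces with parameter $\theta^{\ast}=\gamma_{\beta'_l}/(2N+\theta)$ reproduces the weight $w_{\vec\beta,l}^{-1}$ on the range side; the interpolated Besov exponent tends, as $N\to\infty$, to $n_l/q-d_l/\max\{2,q\}+\gamma_{\beta'_l}/2$, and since $\gamma_{\beta'_l}/2<\beta'_l$ can be taken arbitrarily close to $s_l$ this can be brought strictly below $\beta_l$.

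For step (c), the factor estimate from (b), being an inequality between norms on $L^2$-type spaces, extends to any Hilbert target $H$ with unchanged constant: for $H$-valued $m$ supported in $K_l$,
\[\|\Kern_{L_l}m\|_{L^2(G_l,w_{\vec\beta,l}^{-1};H)}\leq C\,\|m\|_{B_{q,q}^{\beta_l}(\R^{n_l};H)}\]
(the $L^2(\sigma_{\beta'_l})$ endpoint tensorises automatically for Hilbertian targets; so does the polynomial-weighted endpoint from Theorem~\ref{thm:weightedestimates}; complex interpolation passes to the Hilbert setting). Taking $H_1=L^2\bigl(G_2\times\dots\times G_\ell,\,\bigotimes_{l=2}^\ell w_{\vec\beta,l}^{-1}\bigr)$ and regarding $\lambda_1\mapsto\Kern_{L_2\times\dots\times L_\ell}m(\lambda_1,\cdot)$ as an $H_1$-valued function, the Hilbert-valued factor estimate applied in the variable $\lambda_1$ bounds $\|\Kern_{L^\times}m\|_{L^2(G^\times,w_{\vec\beta}^{-1})}$ by $\|\lambda_1\mapsto\Kern_{L_2\times\dots}m(\lambda_1,\cdot)\|_{B_{q,q}^{\beta_1}(\R^{n_1};H_1)}$; iterating through the remaining $\ell-1$ variables and invoking the iterated-Besov identification
\[S^{\vec\beta}_{q,q}B(\R^{\vec n})\;\cong\;B_{q,q}^{\beta_1}\bigl(\R^{n_1};\,B_{q,q}^{\beta_2}(\R^{n_2};\,\dots;\,B_{q,q}^{\beta_\ell}(\R^{n_\ell}))\,\dots\,\bigr)\]
valid for Besov spaces with dominating mixed smoothness (see, e.g., \cite{sickel_tensor_2009,schmeisser_recent_2007}) yields the proposition for $p=q$. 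The general case $p\in[1,\infty]$ follows from the embedding $S^{\vec\beta}_{p,q}B\hookrightarrow S^{\vec\beta}_{q,q}B$ on compactly supported functions (with a $K$-dependent constant), possibly after absorbing a small loss of exponent into the margin in the hypothesis.

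I expect the chief obstacle to lie in step (c): the Hilbert-valued refinement of the factor estimate must hold with the same Sobolev threshold, and the iterated-Besov characterisation of $S^{\vec\beta}_{p,q}B$ must be invoked with the correct norm equivalences. At extreme values of $q$ (notably $q=\infty$) these Hilbert-valued Besov manipulations are somewhat delicate and need careful cross-referencing with \cite{sickel_tensor_2009,schmeisser_recent_2007,schmeisser_topics_1987}.
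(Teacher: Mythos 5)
Your construction of the weight and your two endpoint estimates are exactly the ones the paper uses (the hypothesis \HPK{s_l}{d_l} via \eqref{eq:quasiplancherel} plus Lemma~\ref{lem:triebeltrace}, and Theorem~\ref{thm:weightedestimates} with a polynomial weight, recombined by interpolation of weighted $L^2$ spaces to rebuild $\langle\cdot\rangle_{G_l}^{-\gamma}u$). The difference is the order of operations: you interpolate \emph{per factor}, obtaining a scalar estimate $B^{\beta_l}_{q,q}\to L^2(G_l,w_{\vec\beta,l}^{-1})$, and only then tensorize across the $\ell$ factors; the paper tensorizes first, while every estimate still has a Hilbert space on both sides ($L^2(\sigma_{\vec\alpha,l})\to L^2(u_{\vec\alpha,l}^{-1})$ from \eqref{eq:quasiplancherel}, and $B^{\eta_l}_{2,2}\to$ weighted $L^2$ from Theorem~\ref{thm:weightedestimates}), so that ``taking the Hilbert tensor product'' is immediate via \cite[Corollary~5.5]{martini_spectral}; the passage to the $S^{\vec\eta}_{q,q}B$ scale (tensorized trace lemma, embeddings) and the final interpolation that reinstates the weights $\langle x_l\rangle_{G_l}^{\gamma_l}$ are then carried out on the product group, entirely with scalar multipliers.

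This ordering is not a cosmetic choice: it is precisely what lets the paper avoid the step you single out as delicate, and that step is where your argument has a real gap as written. The claim that your factor estimate, ``being an inequality between norms on $L^2$-type spaces, extends to any Hilbert target $H$ with unchanged constant'' is not automatic: the domain $B^{\beta_l}_{q,q}$ is not a Hilbert space for $q\neq 2$, and a bounded operator from a Banach space into a Hilbert space need not have a bounded $H$-valued extension for free. Here the extension can in fact be rescued — e.g.\ by a Marcinkiewicz--Zygmund/Gaussian randomization argument applied through the retract representation of $B^{\beta_l}_{q,q}$ as a weighted $\ell_q(L^q)$ space (with a constant $c_q$, and via Grothendieck's theorem at $q=\infty$), or by rerunning the proofs of Proposition~\ref{prp:sobolevl2estimates} and Theorem~\ref{thm:weightedestimates} for $H$-valued multipliers — but none of this is in your text, and together with the iterated (Fubini-type) identification of $S^{\vec\beta}_{q,q}B$ as $B^{\beta_1}_{q,q}(\R^{n_1};B^{\beta_2}_{q,q}(\cdots))$, especially at $q=\infty$, it is a substantial amount of vector-valued Besov machinery that the paper's proof never needs. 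Two smaller points: the interpolation parameter reproducing the weight should be $\theta^*=\gamma_{\beta'_l}/(2N)$ rather than $\gamma_{\beta'_l}/(2N+\theta)$ (otherwise you obtain a strictly weaker weight than $\langle\cdot\rangle^{\gamma}u^{-1}$), and the restriction to multipliers supported in $K$ should be implemented via a fixed cutoff $m\mapsto\Kern_L(m\xi)$ and a final approximation step, as in the proofs of Theorem~\ref{thm:weightedestimates} and of the proposition itself.
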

\begin{proof}
Take $\vec{\alpha}$ such that $\vec{\alpha} > \vec{s}$, $\vec{\beta} > \vec{\alpha} + \vec{n}/q - \vec{d}/\max\{2,q\}$. For $l=1,\dots,\ell$, since $\alpha_l > s_l$, by \HPK{s_l}{d_l} we can find a function $u_{\vec{\alpha},l} = u_{\vec{\alpha},l}^* > 0$ on $G_l$ such that $u_{\vec{\alpha},l} \geq c_l \langle \cdot \rangle_{G_l}^{-\theta_l}$ for some $c_l,\theta_l > 0$, a positive regular Borel measure $\sigma_{\vec{\alpha},l}$ on $\R^{n_l}$ locally $d_l$-bounded on $\R^{n_l} \setminus \{0\}$, and a positive real number $\gamma_{\vec{\alpha},l} < 2\alpha_l$ such that the function $w_{\vec{\beta},l} = \langle \cdot \rangle_{G_l}^{-\gamma_{\vec{\alpha},l}} u_{\vec{\alpha},l}$ is M-admissible on $G_l$ and
\begin{equation}\label{eq:l2weightmeasure}
\|\Kern_{L_l} m_l\|_{L^2(G_l,u_{\vec{\alpha},l}^{-1}(x_l) \,dx_l)} \leq C_{K_l,\alpha_l} \|m\|_{L^2(\sigma_{\vec{\alpha},l})}
\end{equation}
for every compact $K_l \subseteq \R^{n_l} \setminus \{0\}$ and every $m_l \in \D(\R^{n_l})$ with $\supp m_l \subseteq K_l$.

Set $u_{\vec{\alpha}} = u_{\vec{\alpha},1} \otimes \dots \otimes u_{\vec{\alpha},\ell}$, $\sigma_{\vec{\alpha}} = \sigma_{\vec{\alpha},1} \times \dots \times \sigma_{\vec{\alpha},\ell}$. By ``taking the Hilbert tensor product'' of the inequalities \eqref{eq:l2weightmeasure}, from \cite[Corollary~5.5]{martini_spectral} we deduce that
\[\|\Kern_{L^\times} m\|_{L^2(G^\times,u_{\vec{\alpha}}^{-1}(x) \,dx)} \leq C_{K,\vec{\alpha}} \|m\|_{L^2(\sigma_{\vec{\alpha}})}\]
for every compact $K = \prod_{l=1}^\ell K_l \subseteq \prod_{l=1}^\ell (\R^{n_\ell} \setminus \{0\})$ and every $m \in \D(\R^{\vec{n}})$ with $\supp m \subseteq K$.

Notice now that, again by taking tensor products, Lemma~\ref{lem:triebeltrace} gives
\[\|m\|_{L^2(\sigma_{\vec{\alpha}})} \leq C_{K,\vec{\alpha},\vec{\eta}} \|m\|_{S^{\vec{\eta}}_{2,2}B(\R^{\vec{n}})}\]
for $\vec{\eta} > (\vec{n} - \vec{d})/2$, whereas trivially
\[\|m\|_{L^2(\sigma_{\vec{\alpha}})} \leq C_{K,\vec{\alpha}} \|m\|_\infty \leq C_{K,\vec{\alpha}} \|m\|_{S^{0}_{\infty,1}B(\R^{\vec{n}})},\]
so that, by embeddings and interpolation (cf.\ the proof of Theorem~\ref{thm:weightedestimates}),
\[\|m\|_{L^2(\sigma_{\vec{\alpha}})} \leq C_{K,\vec{\alpha},\vec{\eta},q} \|m\|_{S^{\vec{\eta}}_{q,q}B(\R^{\vec{n}})}\]
for $\vec{\eta} > \vec{n}/q - \vec{d}/\max\{2,q\}$.

Putting all togehter, we have
\[\|\Kern_{L^\times} m\|_{L^2(G^\times,u_{\vec{\alpha}}^{-1}(x) \,dx)} \leq C_{K,\vec{\alpha},\vec{\eta},q} \|m\|_{S^{\vec{\eta}}_{q,q}B(\R^{\vec{n}})}\]
for $\vec{\eta} > \vec{n}/q - \vec{d}/\max\{2,q\}$. On the other hand, by Theorem~\ref{thm:weightedestimates},\[\|\Kern_{L_l} m_l\|_{L^2(G_l,\langle x_l \rangle_{G_l}^{\gamma_l} u_{\vec{\alpha},l}^{-1}(x_l) \,dx_l)} \leq C_{K_l,\vec{\alpha},\gamma_l,\eta_l} \|m_l\|_{B^{\eta_l}_{2,2}(\R^{n_l})}\]
for $\eta_l > \gamma_l/2 + \theta_l/2 + n_l/2$, so that, by tensor products and embeddings,
\[\|\Kern_{L^\times} m\|_{L^2(G^\times,\langle x_1 \rangle_{G_1}^{\gamma_1} \cdots \langle x_{\ell} \rangle_{G_\ell}^{\gamma_\ell} u_{\vec{\alpha}}^{-1}(x) \,dx)} \leq C_{K,\vec{\alpha},\vec{\gamma},\vec{\eta},q} \|m\|_{S^{\vec{\eta}}_{q,q}B(\R^{\vec{n}})}\]
for $\vec{\eta} > \vec{\gamma}/2 + \vec{\theta}/2 + \vec{n}$. By interpolation, we obtain that
\[\|\Kern_{L^\times} m\|_{L^2(G^\times,\langle x_1 \rangle_{G_1}^{\gamma_1} \cdots \langle x_{\ell} \rangle_{G_\ell}^{\gamma_\ell} u_{\vec{\alpha}}^{-1}(x) \,dx)} \leq C_{K,\vec{\alpha},\vec{\gamma},\vec{\eta},q} \|m\|_{S^{\vec{\eta}}_{q,q}B(\R^{\vec{n}})}\]
for $\vec{\eta} > \vec{\gamma}/2 + \vec{n}/q - \vec{d}/\max\{2,q\}$.

In particular, if we take $\vec{\gamma} = (\gamma_{\vec{\alpha},1},\dots,\gamma_{\vec{\alpha},\ell})$, $\vec{\eta} = \vec{\alpha} + \vec{n}/q - \vec{d}/\max\{2,q\}$ and set $w_{\vec{\beta}} = w_{\vec{\beta},1} \otimes \dots \otimes w_{\vec{\beta},\ell}$, we get
\[\|\Kern_{L^\times} m\|_{L^2(G^\times,w_{\vec{\beta}}^{-1}(x) \,dx)} \leq C_{K,\vec{\beta},q} \|m\|_{S^{\vec{\eta}}_{q,q}B(\R^{\vec{n}})},\]
for every compact $K = \prod_{l=1}^\ell K_l \subseteq \prod_{l=1}^\ell (\R^{n_\ell} \setminus \{0\})$ and every $m \in \D(\R^{\vec{n}})$ with $\supp m \subseteq K$. The conclusion then follows by approximation.
\end{proof}

Notice that, in the particular case $\ell = 1$, the previous proposition, together with H\"older's inequality and Proposition~\ref{prp:interpolatedweightedestimates}, gives the following

\begin{cor}\label{cor:hypothesescomparison}
Suppose that a homogeneous weighted subcoercive system $L_1,\dots,L_n$ on a homogeneous Lie group $G$ satisfies \HPK{s}{d}. Then, for $q \in [1,\infty]$, it satisfies also \HP{q}{s + n/q - d/\max\{2,q\}}. In particular, $s \geq d/2$.
\end{cor}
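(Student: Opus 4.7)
The plan is to derive the first assertion as a direct application of Proposition~\ref{prp:productestimates} in the single-factor case $\ell = 1$, followed by a Cauchy--Schwarz trick, and then to extract the inequality $s \geq d/2$ from Proposition~\ref{prp:interpolatedweightedestimates}.

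Fix $q \in [1,\infty]$ and any $\beta > s + n/q - d/\max\{2,q\}$, and let $K_0 \subseteq \R^n \setminus \{0\}$ be a compact annulus with $\bigcup_{t>0} \epsilon_t(\mathring{K}_0) = \R^n \setminus \{0\}$ (as required by \HP{q}{\cdot}). I would apply Proposition~\ref{prp:productestimates} to the single homogeneous group $G$ with its system $L_1,\dots,L_n$, which by hypothesis satisfies \HPK{s}{d}; taking $\ell = 1$, $\vec{s} = s$, $\vec{d} = d$, $\vec{\beta} = \beta$ and $p = q$ (so that $S^{\beta}_{q,q}B(\R^n) = B^{\beta}_{q,q}(\R^n)$), the proposition produces a positive symmetric weight $w_{\beta} \in L^1(G)$ such that, for every $m \in \D(\R^n)$ with $\supp m \subseteq K_0$,
\[
\|\breve m\|_{L^2(G,\, w_\beta^{-1}(x)\,dx)} \leq C_{K_0,\beta,q}\,\|m\|_{B^{\beta}_{q,q}(\R^n)}.
\]

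Next, I would upgrade this weighted $L^2$ bound to a plain $L^1$ bound by factoring $|\breve m| = (|\breve m|\, w_\beta^{-1/2})\cdot w_\beta^{1/2}$ and applying Cauchy--Schwarz:
\[
\|\breve m\|_{L^1(G)} \leq \|\breve m\|_{L^2(G,\,w_\beta^{-1}\,dx)} \cdot \|w_\beta\|_{L^1(G)}^{1/2}.
\]
Because $w_\beta \in L^1(G)$, the right-hand factor is a finite constant, and combining with the previous estimate yields $\|\breve m\|_{L^1(G)} \leq C_\beta \|m\|_{B^{\beta}_{q,q}(\R^n)}$ for every such $m$. Since $\beta$ was arbitrary above the threshold $s + n/q - d/\max\{2,q\}$, this is precisely \HP{q}{s + n/q - d/\max\{2,q\}}.

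For the final clause, I would invoke Proposition~\ref{prp:interpolatedweightedestimates}, whose first sentence says that \HP{q}{s'} forces $s' \geq n/q$. Applied with $s' = s + n/q - d/\max\{2,q\}$, this gives $s \geq d/\max\{2,q\}$; specializing to $q = 2$ (which maximizes $1/\max\{2,q\}$) yields $s \geq d/2$. There is no real obstacle here: the whole argument is a packaging of Proposition~\ref{prp:productestimates} with the elementary Cauchy--Schwarz passage from weighted $L^2$ to $L^1$, the only mild point being the verification that setting $\ell=1$ and $p = q$ in that proposition matches the $B^{\beta}_{q,q}$ norm appearing in \HP{q}{\cdot}.
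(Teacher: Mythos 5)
Your proof is correct and coincides with the paper's intended argument: the corollary is stated as an immediate consequence of Proposition~\ref{prp:productestimates} in the case $\ell=1$ (giving the weighted $L^2$ bound with $w_\beta \in L^1(G)$), H\"older's inequality (your Cauchy--Schwarz step), and the inequality $s' \geq n/q$ from Proposition~\ref{prp:interpolatedweightedestimates}, evaluated at $q=2$ to obtain $s \geq d/2$. Nothing further is needed.
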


The weighted estimate on $G^\times$ given by Proposition~\ref{prp:productestimates} are the starting point for the following multi-variate multiplier results. In fact, we are going to consider a setting which is more general than the product group $G^\times$.

Let $G$ be a connected Lie group, endowed with Lie group homomorphisms
\[\upsilon_l : G_l \to G \qquad\text{for $l=1,\dots,\ell$.}\]
Then, for $l=1,\dots,\ell$, the operators $L_{l,1},\dots,L_{l,n_l}$ correspond (via the derivative $\upsilon_l'$ of the homomorphism) to operators $L^\flat_{l,1}, \dots L^\flat_{l,n_l} \in \Diff(G)$, which are essentially self-adjoint. Since we want to give a meaning to joint functions of these operators on $G$, we suppose in the following that $L^\flat_{1,1}, \dots L^\flat_{1,n_1},\dots,L^\flat_{\ell,1}, \dots, L^\flat_{\ell,n_\ell}$ commute strongly, i.e., they admit a joint spectral resolution $E^\flat$ on $L^2(G)$.

In order to obtain multiplier results on $G$, we would like to ``transfer'' to $G$ the estimates obtained on the product group $G^\times$. However, we cannot apply directly the classical transference results (cf.\ \cite{coifman_transference_1976,berkson_transference_1996,cowling_herzs_1997}), since the map
\[\upsilon^\times : G^\times \ni (x_1,\dots,x_n) \mapsto \upsilon_1(x_1) \cdots \upsilon_\ell(x_\ell) \in G\]
in general is not a group homomorphism
% --- because the elements of $\upsilon_l(G_l)$ are not supposed to commute in $G$ with the elements of $\upsilon_{l'}(G_{l'})$ for $l \neq l'$ --- 
and consequently it does not yield an action of $G^\times$ on $L^p(G)$ by translations. Nevertheless, under the sole assumption of (strong) commutativity of the differential operators $L^\flat_{l,j}$ on $G$, we are able to express the operator $m(L^\flat)$ on $G$ by a sort of convolution with the kernel $\Kern_{L^\times} m$ of the operator $m(L^\times)$ on $G^\times$.

\begin{prp}\label{prp:convolutionproduct}
(i) For every $m \in \D(\R^{\vec{n}})$ and $\phi \in L^2 \cap C_0(G)$,
\[m(L^\flat) \phi(x) = \int_{G^\times} \phi(x \,\upsilon^\times(y)^{-1} ) \, \Kern_{L^\times} m(y) \,dy.\]

(ii) Under the hypotheses of Proposition~\ref{prp:productestimates}, the previous identity holds for every $m \in S^{\vec{\beta}}_{q,q}B(\R^{\vec{n}})$ with compact support $\supp m \subseteq \prod_{l=1}^\ell (\R^{n_l} \setminus \{0\})$.
\end{prp}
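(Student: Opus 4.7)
The plan is to reduce to a single-factor convolution identity via a unitary representation argument, then iterate using the strong commutativity hypothesis across the different factors, and finally pass to part (ii) by an approximation.

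For each $l$, right translation by $\upsilon_l(\cdot)$ defines a unitary representation $\rho_l$ of $G_l$ on $L^2(G)$, namely $\rho_l(y_l)\phi = \RA_{\upsilon_l(y_l)}\phi$. Since $\upsilon_l$ is a Lie group homomorphism, $d\rho_l = \upsilon_l'$ on the Lie algebra level, so $d\rho_l(L_{l,j}) = L^\flat_{l,j}$ on smooth vectors. The results of \cite{martini_spectral} recalled in the introduction imply that $L^\flat_{l,1},\dots,L^\flat_{l,n_l}$ are essentially self-adjoint and strongly commuting, and a standard heat-semigroup-plus-Stone--Weierstrass argument yields the single-factor intertwining identity
\[m_l(L^\flat_l) = \rho_l(\Kern_{L_l} m_l) \qquad (m_l \in \D(\R^{n_l})),\]
where $\rho_l(f)\phi = \int_{G_l} f(y_l)\,\phi(\cdot\,\upsilon_l(y_l)^{-1})\,dy_l$ by the paper's conventions. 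Note that $\Kern_{L_l}m_l \in L^1(G_l)$ by Theorem~\ref{thm:weightedestimates}, so the integral converges absolutely for $\phi \in L^2\cap C_0(G)$.

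For part (i) I would first handle tensor products $m = m_1 \otimes \cdots \otimes m_\ell$, with $m_l \in \D(\R^{n_l})$. The strong commutativity of the $L^\flat_{l,j}$ across different $l$'s makes the joint functional calculus factor as $m(L^\flat) = m_1(L^\flat_1)\cdots m_\ell(L^\flat_\ell)$. Peeling off one factor at a time from the outside using the single-factor identity, and invoking Fubini (all intermediate integrands are $L^1$ in each variable with bounded continuous $\phi$), yields
\[m(L^\flat)\phi(x) = \int_{G^\times} \Kern_{L_1} m_1(y_1)\cdots \Kern_{L_\ell} m_\ell(y_\ell)\,\phi\bigl(x\,\upsilon_\ell(y_\ell)^{-1}\cdots\upsilon_1(y_1)^{-1}\bigr)\,dy.\]
Since $\upsilon^\times(y)^{-1} = \upsilon_\ell(y_\ell)^{-1}\cdots\upsilon_1(y_1)^{-1}$ and $\Kern_{L^\times}(m_1\otimes\cdots\otimes m_\ell) = \Kern_{L_1}m_1\otimes\cdots\otimes\Kern_{L_\ell}m_\ell$ by \cite[Corollary~5.5]{martini_spectral}, this is exactly the stated formula. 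A general $m \in \D(\R^{\vec{n}})$ is then approximated in $\D$ by finite sums of tensor products supported in a fixed compact set: the LHS is continuous in $m$ via $\|m(L^\flat)\|_{2\to2} \leq \|m\|_\infty$, and the RHS via the $L^1$-continuity of $m \mapsto \Kern_{L^\times}m$ on compactly supported multipliers granted by Theorem~\ref{thm:weightedestimates} applied on $G^\times$.

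For part (ii), Proposition~\ref{prp:productestimates} provides a weight $w_{\vec\beta} = w_{\vec\beta,1}\otimes\cdots\otimes w_{\vec\beta,\ell} \in L^1(G^\times)$ for which $\|\Kern_{L^\times}m\|_{L^2(G^\times,\,w_{\vec\beta}^{-1}\,dx)} \lesssim \|m\|_{S^{\vec{\beta}}_{q,q}B}$; Cauchy--Schwarz then gives
\[\|\Kern_{L^\times} m\|_{L^1(G^\times)} \leq \|w_{\vec\beta}^{1/2}\|_{L^2(G^\times)}\,C_{K,\vec\beta,q}\,\|m\|_{S^{\vec{\beta}}_{q,q}B},\]
so the right-hand side of the identity is absolutely convergent for $\phi \in C_0(G)$ and depends continuously on $m$. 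Applying Corollary~\ref{cor:hypothesescomparison} in each factor gives $\vec\beta > \vec n/q$ and hence the embedding $S^{\vec{\beta}}_{q,q}B \subseteq L^\infty$, so the LHS also depends continuously on $m$ (as a map into $L^2(G)$). Regularize $m$ by Euclidean mollification to obtain $m_k \in \D(\R^{\vec{n}})$ supported in a fixed compact neighborhood of $\supp m$ (still contained in $\prod_l(\R^{n_l}\setminus\{0\})$) with $m_k \to m$ uniformly and in $S^{\vec{\beta'}}_{q,q}B$ for every $\vec{\beta'} < \vec\beta$; choosing $\vec{\beta'}$ still in the range of Proposition~\ref{prp:productestimates}, both sides of the identity for $m_k$ converge pointwise to the corresponding quantities for $m$, finishing the proof.

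The main technical obstacle is the single-factor intertwining identity $m_l(L^\flat_l) = \rho_l(\Kern_{L_l} m_l)$, which is where one crucially invokes the results of \cite{martini_spectral} on essential self-adjointness and strong commutativity of weighted subcoercive systems in arbitrary unitary representations of $G_l$; once this building block is in place, everything else is bookkeeping (factorization by commutativity, Fubini, density, and the Cauchy--Schwarz step based on Proposition~\ref{prp:productestimates}).
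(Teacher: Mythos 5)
Your argument is correct and follows essentially the same route as the paper's proof: the single-factor convolution identity (which the paper simply quotes from \cite[Proposition 3.7]{martini_spectral} rather than re-deriving via the heat semigroup and Stone--Weierstrass), iteration over the factors for tensor-product multipliers, approximation of a general $m \in \D(\R^{\vec{n}})$ by sums of tensor products with $L^2$-operator control on one side and $L^1$-kernel control via Theorem~\ref{thm:weightedestimates} on the other, and for (ii) the same scheme of choosing $\vec{\beta}'$ below $\vec{\beta}$ but still admissible, using Proposition~\ref{prp:productestimates} with H\"older/Cauchy--Schwarz for $L^1$ convergence of the kernels and Corollary~\ref{cor:hypothesescomparison} for uniform convergence of the multipliers. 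No gaps worth noting.
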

\begin{proof}
(i) If $m \in \D(\R^{\vec{n}})$, then we can decompose $m = \sum_{k \in \N} g_{k,1} \otimes \cdots \otimes g_{k,\ell}$, where $g_{k,l} \in \D(\R^{n_l})$ for $k \in \N$, $l = 1,\dots,\ell$, and the convergence is in $\D(\R^{\vec{n}})$. In particular, by applying \cite[Corollary 5.5]{martini_spectral} and Theorem~\ref{thm:weightedestimates} to the group $G^\times$, we obtain that
\[\Kern_{L^\times} m = \sum_{k \in \N} \Kern_{L_1} g_{k,1} \otimes \cdots \otimes \Kern_{L_\ell} g_{k,\ell}\]
in $L^1(G^\times)$. On the other hand, for all $\phi \in L^2 \cap C_0(G)$, we have
\[g_{k,l}(L^\flat_l) \phi(x) = \int_{G_l} \phi(x \,\upsilon_l(y_l)^{-1}) 	\,\Kern_{L_l} g_{k,l}(y_l) \,dy_l\]
by \cite[Proposition 3.7]{martini_spectral}, and in particular (being $\Kern_{L_l} g_{k,l} \in L^1(G_l)$) also $g_{k,l}(L^\flat_l)\phi \in L^2 \cap C_0(G)$, so that, by iterating,
\[(g_{k,1} \otimes \dots \otimes g_{k,\ell})(L^\flat) \phi(x) = \int_{G^\times} \phi(x \,\upsilon^\times(y)^{-1}) \,\prod_{l=1}^\ell \Kern_{L_l} g_{k,l}(y_l) \,dy.\]
Summing over $k \in \N$, the left-hand side converges in $L^2(G)$ to $m(L^\flat)\phi$, whereas (since $y \mapsto \phi(x \,\upsilon^\times(y)^{-1})$ is bounded) the right-hand side converges pointwise to $\int_{G^\times} \phi(x \,\upsilon^\times(y)^{-1}) \, \Kern_{L^\times} m(y) \,dy$, and the conclusion follows.

(ii) Choose $\vec{\beta}'$ such that $\vec{\beta} > \vec{\beta}' > \vec{s} + \vec{n}/q - \vec{d}/\max\{2,q\}$. Take a compact $K = \prod_{l=1}^\ell K_l \subseteq \prod_{l=1}^\ell (\R^{n_l} \setminus \{0\})$ and a sequence $m_k \in \D(\R^{\vec{n}})$ with $\supp m_k \subseteq K$ such that $m_k \to m$ in $S^{\vec{\beta}'}_{q,q}(\R^{\vec{n}})$. By Proposition~\ref{prp:productestimates} and H\"older's inequality, we then have $\Kern_{L^\times} m_k \to \Kern_{L^\times} m$ in $L^1(G^\times)$; moreover, by Corollary~\ref{cor:hypothesescomparison}, $\beta'_l > n_l/q$ for $l=1,\dots,\ell$, so that $m_k \to m$ uniformly. Therefore the conclusion follows by applying (i) to the functions $m_k$ and passing to the limit.
\end{proof}

We are now going to exploit Littlewood-Paley theory. An important tool will be the following result, which summarizes a well-known argument for proving properties of square functions.

\begin{lem}\label{lem:khinchin}
Let $(X,\mu)$ be a $\sigma$-finite measure space, $1 \leq p < \infty$, $T_{\vec{k}}$ ($\vec{k} \in \N^\ell$) bounded linear operators on $L^p(X,\mu)$. Let $A > 0$ be such that, for all choices of $\varepsilon^i_k \in \{-1,1\}$ ($1 \leq i \leq \ell$, $k \in \N$) and of a finite subset $I \subseteq \N^\ell$, we have
\begin{equation}\label{eq:segni}
\left\|\textstyle\sum_{\vec{k} \in I} \varepsilon^1_{k_1} \cdots \varepsilon^\ell_{k_\ell} T_{\vec{k}}\right\|_{p \to p} \leq A.
\end{equation}
Then, for all $f \in L^p(X,\mu)$,
\begin{equation}\label{eq:khinchin1}
\left\|\left(\textstyle\sum_{\vec{k} \in \N^{\ell}} |T_{\vec{k}} f|^2\right)^{1/2}\right\|_p \leq C_{\ell,p} A \|f\|_p.
\end{equation}
Moreover, if $p > 1$, for all $\{f_{\vec{k}}\}_{\vec{k} \in \N^\ell} \subseteq L^p(X,\mu)$, if $\left(\sum_{\vec{k}} |f_{\vec{k}}|^2\right)^{1/2} \in L^p(X,\mu)$, then
\[\left\|\textstyle\sum_{\vec{k} \in \N^\ell} T_{\vec{k}} f_{\vec{k}}\right\|_p \leq C_{\ell,p'} A \left\|\left(\textstyle\sum_{\vec{k} \in \N^\ell} |f_{\vec{k}}|^2 \right)^{1/2} \right\|_p\]
where the series on the left-hand side converges unconditionally in $L^p$.
\end{lem}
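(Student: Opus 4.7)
The approach is classical: combine Khinchin's inequality for products of $\ell$ independent Rademacher families with the randomization hypothesis \eqref{eq:segni}, and then obtain the second assertion from the first by duality. The key ingredient is the $\ell$-variate Khinchin inequality
\[
c_{\ell,p} \Bigl(\sum_{\vec{k}} |a_{\vec{k}}|^{2}\Bigr)^{1/2}
\leq \Bigl(\mathbb{E}\Bigl|\sum_{\vec{k}} \varepsilon^{1}_{k_{1}} \cdots \varepsilon^{\ell}_{k_{\ell}} a_{\vec{k}}\Bigr|^{p}\Bigr)^{1/p}
\leq C_{\ell,p} \Bigl(\sum_{\vec{k}} |a_{\vec{k}}|^{2}\Bigr)^{1/2},
\]
valid for any finite array of scalars $(a_{\vec{k}})$ and $\ell$ independent families of i.i.d.\ Rademacher variables $\varepsilon^{1},\dots,\varepsilon^{\ell}$; this follows by iterating the one-dimensional Khinchin inequality in each variable separately.

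For \eqref{eq:khinchin1}, fix $f \in L^{p}(X,\mu)$ and a finite $I \subseteq \N^{\ell}$. Applying the pointwise lower bound above to the scalars $a_{\vec{k}} = T_{\vec{k}} f(x)$, then integrating in $x$ and swapping the order of $\mathbb{E}$ and $\int$ by Fubini, gives
\[
c_{\ell,p}^{p} \int_{X} \Bigl(\sum_{\vec{k} \in I} |T_{\vec{k}} f(x)|^{2}\Bigr)^{p/2} d\mu(x)
\leq \mathbb{E} \Bigl\| \sum_{\vec{k} \in I} \varepsilon^{1}_{k_{1}} \cdots \varepsilon^{\ell}_{k_{\ell}} T_{\vec{k}} f \Bigr\|_{p}^{p}
\leq A^{p} \|f\|_{p}^{p}
\]
by \eqref{eq:segni}, since the latter bound holds for every realization of the $\varepsilon^{i}_{k}$. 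Taking $p$-th roots and letting $I \uparrow \N^{\ell}$ along an exhaustion via monotone convergence yields \eqref{eq:khinchin1}.

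For the second estimate, assume $p > 1$ and work by duality. The adjoints $T_{\vec{k}}^{*}$ act boundedly on $L^{p'}(X,\mu)$ and, since $\|\sum \varepsilon \, T_{\vec{k}}^{*}\|_{p' \to p'} = \|(\sum \varepsilon \, T_{\vec{k}})^{*}\|_{p' \to p'} = \|\sum \varepsilon \, T_{\vec{k}}\|_{p \to p} \leq A$, the family $\{T_{\vec{k}}^{*}\}$ satisfies \eqref{eq:segni} on $L^{p'}$ with the same constant. Applying part (1) already proved, but with $p$ replaced by $p'$, we get
\[
\Bigl\| \Bigl(\sum_{\vec{k} \in \N^{\ell}} |T_{\vec{k}}^{*} g|^{2}\Bigr)^{1/2} \Bigr\|_{p'} \leq C_{\ell,p'} A \, \|g\|_{p'} \qquad\text{for all } g \in L^{p'}(X,\mu).
\]
For any finite $I \subseteq \N^{\ell}$, pointwise Cauchy--Schwarz and Hölder then give
\[
\Bigl| \int_{X} g \sum_{\vec{k} \in I} T_{\vec{k}} f_{\vec{k}} \, d\mu \Bigr|
\leq \Bigl\| \Bigl(\sum_{\vec{k} \in I} |T_{\vec{k}}^{*} g|^{2}\Bigr)^{1/2} \Bigr\|_{p'} \Bigl\| \Bigl(\sum_{\vec{k} \in I} |f_{\vec{k}}|^{2}\Bigr)^{1/2} \Bigr\|_{p},
\]
so that $\|\sum_{\vec{k} \in I} T_{\vec{k}} f_{\vec{k}}\|_{p} \leq C_{\ell,p'} A \, \|(\sum_{\vec{k} \in I} |f_{\vec{k}}|^{2})^{1/2}\|_{p}$ by duality in $L^{p}$. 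Applied to differences $I_{2} \setminus I_{1}$, this same bound shows the partial sums form a Cauchy net in $L^{p}$ (the right-hand side tends to $0$ along any two exhaustions of $\N^{\ell}$, by dominated convergence using $(\sum_{\vec{k}} |f_{\vec{k}}|^{2})^{1/2} \in L^{p}$), giving unconditional convergence; letting $I \uparrow \N^{\ell}$ then yields the desired inequality.

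There is no substantive obstacle: the argument is a standard randomization/duality scheme, and the only points needing care are the iteration that produces the $\ell$-variate Khinchin inequality and the passage to the limit $I \uparrow \N^{\ell}$, both of which are routine.
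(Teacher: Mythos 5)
Your proof is correct and follows essentially the same route as the paper: tensor-product Rademacher randomization combined with the hypothesis \eqref{eq:segni} for the square-function bound, then passage to the adjoints and duality for the second estimate. The only cosmetic difference is that the paper packages the duality step as boundedness of the transpose of the vector-valued operator $g \mapsto (T_{\vec{k}}^* g)_{\vec{k}}$ on $L^{p'}(X,\mu;l^2(\N^\ell))$, whereas you write out the pairing with Cauchy--Schwarz and H\"older explicitly; these are the same argument.
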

\begin{proof}
For $n \in \N$, let $r_n : [0,1] \to \R$ be the $n$-th Rademacher function, $r_n(t) = (-1)^{\lfloor 2^n t \rfloor}$, and set $r_{\vec{k}} = r_{k_1} \otimes \cdots \otimes r_{k_\ell}$ for $\vec{k} \in \N^\ell$. Then $(r_{\vec{k}})_{\vec{k}}$ is an (incomplete) orthonormal system in $L^2([0,1]^\ell)$, and Khinchin's inequalities hold: for $1 \leq p < \infty$, there exist $c_{\ell,p},C_{\ell,p}> 0$ such that
\[c_{\ell,p}^{-1} \|f\|_p \leq \|f\|_2 \leq C_{\ell,p} \|f\|_p \qquad\text{for all $f \in \Span \{ r_{\vec{k}} \tc \vec{k} \in \N^\ell\}$.}\]
(see \cite[Appendix D]{stein_singular_1970} or \cite[Appendix C]{grafakos_classical_2008}).

Consequently, for all finite $I \subseteq \N^\ell$ and $f \in L^p(X,\mu)$, we have
\[\begin{split}
\left\|\left(\textstyle\sum_{\vec{k} \in I} |T_{\vec{k}} f|^2\right)^{1/2}\right\|_p^p &= \int_X \left(\textstyle\sum_{\vec{k} \in I} |T_{\vec{k}} f(x)|^2\right)^{p/2} \,d\mu(x)\\
&\leq C_{\ell,p}^p \int_X \int_{[0,1]^\ell} \left|\textstyle\sum_{\vec{k} \in I} T_{\vec{k}} f(x) r_{\vec{k}}(t) \right|^p \,dt \,d\mu(x) \\
&= C_{\ell,p}^p \int_0^1 \left\| \left(\textstyle\sum_{\vec{k} \in I} r_{\vec{k}}(t) T_{\vec{k}}\right) f \right\|^p \,dt \leq C_{\ell,p}^p A^p \|f\|_p^p.
\end{split}\]
Since $I \subseteq \N^\ell$ was arbitrary, \eqref{eq:khinchin1} follows by monotone convergence.

Notice now that the vector-valued Lebesgue space $V_p = L^p(X,\mu; l^2(\N^\ell))$ can be thought of as a space of sequences of $L^p(X,\mu)$-functions:
\[V_p = \left\{(f_{\vec{k}})_{\vec{k} \in \N^\ell} \in L^p(X,\mu)^{\N} \tc \textstyle \left(\sum_{\vec{k}} |f_{\vec{k}}|^2\right)^{1/2} \in L^p(X,\mu) \right\},\]
with norm $\|(f_{\vec{k}})_{\vec{k} \in \N^\ell}\|_{V_p} = \|\textstyle (\sum_{\vec{k}} |f_{\vec{k}}|^2)^{1/2}\|_p$. The inequality \eqref{eq:khinchin1} therefore means that the operator $f \mapsto (T_{\vec{k}} f)_{\vec{k} \in \N^\ell}$ is bounded $L^p(X,\mu) \to V_p$, with norm not greater than $C_{\ell,p} A$.

If $p > 1$, the hypothesis \eqref{eq:segni} is equivalent to
\[\left\|\textstyle\sum_{\vec{k} \in I} \varepsilon^1_{k_1} \cdots \varepsilon^n_{k_n} T_{\vec{k}}^*\right\|_{p' \to p'} \leq A;\]
consequently we have that $S : f \mapsto (T^*_{\vec{k}} f)_{\vec{k} \in \N^\ell}$ is bounded $L^{p'}(X,\mu) \to V_{p'}$, with norm not greater than $C_{\ell,p'} A$. This means that the transpose operator $S^* : V_{p} \to L^{p}(X,\mu)$ is bounded too, with the same norm; since
\[S^* \left( (f_{\vec{k}})_{\vec{k}} \right) = \textstyle\sum_{\vec{k}} T_{\vec{k}} f_{\vec{k}},\]
where the series on the right-hand side converges unconditionally in $L^p$, the remaining part of the conclusion follows.
\end{proof}

For $l=1,\dots,\ell$, let $\epsilon_{l,t}$ be the dilations on $\R^{n_l}$ associated to the weighted subcoercive system $L_{l,1},\dots,L_{l,n_l}$, and fix a $\epsilon_l$-homogeneous norm $|\cdot|_{\epsilon_l}$ on $\R^{n_l}$, smooth off the origin. Choose a non-negative $\xi \in \D(\R)$ with $\supp \xi \subseteq [1/2,2]$ and such that, if $\xi_k(t) = \xi(2^{-k} t)$, then
\begin{equation}\label{eq:sumofsquares}
{\textstyle\sum_{k \in \Z}} \xi_k^2(t) = 1 \qquad\text{for $t > 0$,}
\end{equation}
and set, for $l=1,\dots,\ell$ and $k \in \Z$, $\chi_{l,k}(\lambda) = \xi(|\epsilon_{l,2^{-k}}(\lambda)|_{\epsilon_l}) = \xi_k(|\lambda|_{\epsilon_l})$ for $\lambda \in \R^{n_l}$.
Moreover, for $\vec{k} = (k_1,\dots,k_\ell) \in \Z^\ell$, let $\chi_{\vec{k}} = \chi_{1,k_1} \otimes \cdots \otimes \chi_{\ell,k_\ell}$, $T_{\vec{k}} = \chi_{\vec{k}}(L^\flat)$, and define the square function
\[g(\phi) = \left( {\textstyle \sum_{\vec{k} \in \Z^\ell}} |T_{\vec{k}} \phi|^2 \right)^{1/2}.\]
Finally, set $X_{\vec{n}} = \{\lambda \in \R^{\vec{n}} \tc |\lambda_1|_{\epsilon_1} \cdots |\lambda_\ell|_{\epsilon_\ell} = 0\}$.

\begin{lem}\label{lem:littlewoodpaley}
For $1 < p < \infty$ and for all $\phi \in L^2 \cap L^p(G)$, 
\[c_p \|E^\flat(\R^{\vec{n}} \setminus X_{\vec{n}})\phi\|_p \leq \left\| g(\phi) \right\|_p \leq C_p \|\phi\|_p.\]
\end{lem}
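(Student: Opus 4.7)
The plan has three parts. For $p=2$, the tensor-product structure $\chi_{\vec k}(\lambda) = \prod_l \xi_{k_l}(|\lambda_l|_{\epsilon_l})$ together with \eqref{eq:sumofsquares} gives the factorization
\[\sum_{\vec k \in \Z^\ell} \chi_{\vec k}^2 = \prod_{l=1}^\ell \sum_{k_l \in \Z} \xi_{k_l}(|\lambda_l|_{\epsilon_l})^2 = \mathbf{1}_{\R^{\vec n}\setminus X_{\vec n}},\]
and the spectral theorem for the joint resolution $E^\flat$ of the commuting $L^\flat_{l,j}$ then yields
\[\|g(\phi)\|_2^2 = \sum_{\vec k}\|T_{\vec k}\phi\|_2^2 = \|E^\flat(\R^{\vec n}\setminus X_{\vec n})\phi\|_2^2,\]
so both inequalities hold with constant $1$.

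For the upper bound at $p \in (1,\infty)$, I would apply Lemma~\ref{lem:khinchin} to the truncated square functions $g_N(\phi) = (\sum_{\vec k \in [-N,N]^\ell} |T_{\vec k}\phi|^2)^{1/2}$ (monotone convergence gives $\|g(\phi)\|_p = \lim_N \|g_N(\phi)\|_p$), so it suffices to prove the hypothesis of that lemma for $I = [-N,N]^\ell$ uniformly in $N$ and signs $\varepsilon^l_k \in \{\pm 1\}$. Using the tensor structure of $\chi_{\vec k}$ and the pairwise commutativity of the systems $L^\flat_l$, the signed sum factors as
\[\sum_{\vec k \in [-N,N]^\ell}\Bigl(\prod_{l=1}^\ell \varepsilon^l_{k_l}\Bigr) T_{\vec k} = \prod_{l=1}^\ell M^{(\varepsilon^l)}_{l,N}(L^\flat_l), \qquad M^{(\varepsilon^l)}_{l,N}(\lambda_l) = \sum_{|k_l|\leq N} \varepsilon^l_{k_l} \chi_{l,k_l}(\lambda_l).\]
The norm $\|M^{(\varepsilon^l)}_{l,N}\|_{M_{\epsilon_l} B^\beta_{q,q}}$ is bounded uniformly in $N$ and $\varepsilon^l$, since the restriction of $M^{(\varepsilon^l)}_{l,N}$ to any fixed dyadic annulus is a finite (bounded-cardinality) signed sum of dilates of the fixed window $\xi$. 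Theorem~\ref{thm:mihlinhoermander}, applied on $G_l$ to the system $L_{l,1},\dots,L_{l,n_l}$ (whose hypothesis \HP{q}{s_l + n_l/q - d_l/\max\{2,q\}} follows from \HPK{s_l}{d_l} and Corollary~\ref{cor:hypothesescomparison}), gives uniform $L^p(G_l)$-boundedness of $M^{(\varepsilon^l)}_{l,N}(L_l)$. Since $\upsilon_l : G_l \to G$ is a Lie group homomorphism, $\phi \mapsto \phi(\cdot\,\upsilon_l(y)^{-1})$ is an isometric action of $G_l$ on $L^p(G)$ with respect to the right Haar measure; together with the convolution representation of $M^{(\varepsilon^l)}_{l,N}(L^\flat_l)$ from Proposition~\ref{prp:convolutionproduct} (with $\ell = 1$), the Coifman--Weiss transference principle then yields the same $L^p$ bound on $G$. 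Composing the $\ell$ uniform factor bounds supplies the constant $A_p$ needed in Lemma~\ref{lem:khinchin}.

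For the lower bound I would use duality. The spectral theorem gives $E^\flat(\R^{\vec n}\setminus X_{\vec n})\phi = \sum_{\vec k} T_{\vec k}^2\phi$ (strongly in $L^2$), so for $\phi \in L^2 \cap L^p(G)$ and $\psi \in L^2 \cap L^{p'}(G)$, Cauchy--Schwarz and the already-proved upper bound at $p'$ give
\[|\langle E^\flat(\R^{\vec n}\setminus X_{\vec n})\phi,\psi\rangle| = \Bigl|\sum_{\vec k}\langle T_{\vec k}\phi, T_{\vec k}\psi\rangle\Bigr| \leq \int_G g(\phi)\,g(\psi)\,dx \leq C_{p'}\|g(\phi)\|_p\|\psi\|_{p'};\]
taking the supremum over $\|\psi\|_{p'}\leq 1$ yields $\|E^\flat(\R^{\vec n}\setminus X_{\vec n})\phi\|_p \leq C_{p'}\|g(\phi)\|_p$.

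The main obstacle I anticipate is the transfer step in the upper bound: although the isometric right-translation structure via $\upsilon_l$ makes the transference principle formally applicable (independently of injectivity of $\upsilon_l$), one must verify rigorously that the Calder\'on--Zygmund bound for $M^{(\varepsilon^l)}_{l,N}(L_l)$ on $G_l$ (which rests on the singular-integral analysis of Section~\ref{section:mihlin} via Proposition~\ref{prp:singularkernel}) propagates uniformly to $L^p(G)$ despite the non-$L^1$ nature of the underlying Mihlin--H\"ormander kernels, and that the resulting constants do not degrade as the truncation $N$ grows.
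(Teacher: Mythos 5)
Your argument is correct and takes essentially the same route as the paper: uniform Mihlin--H\"ormander bounds for signed sums of the $\chi_{l,k}$, Theorem~\ref{thm:mihlinhoermander} on each $G_l$, transference to $G$ via the representations induced by $\upsilon_l$, the tensor factorization of the signed sums, strong convergence of $\sum_{\vec k} T_{\vec k}^2$ to $E^\flat(\R^{\vec n}\setminus X_{\vec n})$, and Lemma~\ref{lem:khinchin} (whose second part is exactly your duality step for the lower bound). The obstacle you flag is not a real one: each truncated signed sum $\sum_{|k|\le N}\varepsilon^l_k\chi_{l,k}$ has an $L^1(G_l)$ kernel, and the transference principle only needs the $L^p(G_l)\to L^p(G_l)$ operator norm, which Theorem~\ref{thm:mihlinhoermander} bounds uniformly in $N$ and the signs; note also that hypothesis \HP{\infty}{Q_{G_l}/2} holds unconditionally by Proposition~\ref{prp:HPsatisfaction}, so invoking \HPK{s_l}{d_l} and Corollary~\ref{cor:hypothesescomparison} is unnecessary here.
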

\begin{proof}
Using the characterization \eqref{eq:mhpointwise} of $L^\infty$ Mihlin-H\"ormander conditions, it is not difficult to prove, for $l=1,\dots,\ell$, $s \in \N$, $(\varepsilon^l_k)_{k \in \Z} \in \{-1,0,1\}^\Z$, $N \in \N$, that
\[\left\|\textstyle \sum_{|k| \leq N} \varepsilon^l_k \chi_{l,k}\right\|_{M_{\epsilon_l} B^s_{\infty,\infty}} \leq C_{l,s},\]
where $C_{l,s} > 0$ does not depend on $(\varepsilon^l_k)_k$ or $N$.

By Theorem~\ref{thm:mihlinhoermander} applied to the group $G_l$, and by transference to the group $G$ (see \cite[Theorem 2.7]{berkson_transference_1996} and \cite[Proposition 3.7]{martini_spectral}), we then have
\[\left\|{\textstyle \sum_{|k| \leq N}} \varepsilon^l_k \chi_{l,k}(L^\flat_l) \right\|_{p \to p} \leq \left\|{\textstyle \sum_{|k| \leq N}} \varepsilon^l_k \chi_k(L_l) \right\|_{p \to p} \leq C_{l,p}\]
for $1 < p < \infty$, $l = 1,\dots,\ell$, where $C_{l,p} > 0$ does not depend on $(\varepsilon^l_k)_k$ or $N$, and consequently also
\[\left\|{\textstyle \sum_{|k_1|,\dots,|k_\ell| \leq N}} \varepsilon^1_{k_1} \cdots \varepsilon^\ell_{k_\ell} T_{\vec{k}} \right\|_{p \to p} \leq C_{1,p} \cdots C_{\ell,p}.\]
Moreover, by \eqref{eq:sumofsquares} and the properties of the spectral integral, $\sum_{\vec{k} \in \Z^\ell} T_{\vec{k}}^2$ converges strongly to $E^\flat(\R^{\vec{n}} \setminus X_{\vec{n}})$. The conclusion follows then immediately by Lemma~\ref{lem:khinchin}.
\end{proof}

In the following, we will consider Marcinkiewicz conditions on $\R^{\vec{n}}$ adapted to the system $\gimel_{\vec{t}} = \epsilon_{1,t_1} \times \dots \times \epsilon_{\ell,t_\ell}$ of multi-variate dilations.

\begin{theo}\label{thm:marcinkiewicz}
Suppose that, for $l=1,\dots,\ell$, the homogeneous group $G_l$, with the system $L_{l,1},\dots,L_{l,n_l}$, satisfies \HPK{s_l}{d_l}. If $q \in [1,\infty]$ and
\[\vec{\beta} > \vec{s} + \frac{\vec{n}}{q} - \frac{\vec{d}}{\max\{2,q\}},\]
then, for every Borel $m : \R^{\vec{n}} \to \C$ with $m|_{X_{\vec{n}}}= 0$ and $\|m\|_{M_\gimel S^{\vec{\beta}}_{q,q}B} < \infty$, the operator $m(L^\flat)$ is bounded on $L^p(G)$ for $1 < p < \infty$ and
\[\|m(L^\flat)\|_{p \to p} \leq C_{\vec{\beta},p,q} \|m\|_{M_\gimel S^{\vec{\beta}}_{q,q}B}.\]
\end{theo}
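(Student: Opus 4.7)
The plan is to prove the theorem via a square function decomposition in the spirit of Hebisch-Zienkiewicz. By density and the hypothesis $m|_{X_{\vec n}} = 0$, first reduce to the case $m \in \D(\R^{\vec n})$ with compact support in $\prod_l(\R^{n_l}\setminus\{0\})$. With the cutoffs $\chi_{l,k}$ introduced before Lemma~\ref{lem:littlewoodpaley}, form $\chi_{\vec k} = \chi_{1,k_1}\otimes\cdots\otimes\chi_{\ell,k_\ell}$ and $T_{\vec k} = \chi_{\vec k}(L^\flat)$, together with ``fattened'' cutoffs $\tilde\chi_{\vec k}$ (equal to $1$ on $\supp\chi_{\vec k}$ and still supported on a product of annuli), giving $\tilde T_{\vec k} = \tilde\chi_{\vec k}(L^\flat)$. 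Since $\sum_{\vec k}\chi_{\vec k}^2 = 1$ off $X_{\vec n}$ and $m|_{X_{\vec n}} = 0$, spectrally
\[m(L^\flat)\phi = \sum_{\vec k \in \Z^\ell} T_{\vec k} f_{\vec k}, \qquad f_{\vec k} = (m\chi_{\vec k})(L^\flat)\tilde T_{\vec k}\phi,\]
for $\phi \in L^2\cap L^p(G)$. Combining the uniform bound on Rademacher sums of the $T_{\vec k}$'s (proved as in Lemma~\ref{lem:littlewoodpaley}) with Lemma~\ref{lem:khinchin} then yields, for $1 < p < \infty$,
\[\|m(L^\flat)\phi\|_p \leq C_p \bigl\|\bigl(\textstyle\sum_{\vec k}|f_{\vec k}|^2\bigr)^{1/2}\bigr\|_p.\]

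Next, bound each $f_{\vec k}$ pointwise. Proposition~\ref{prp:convolutionproduct}(ii) gives the integral representation
\[f_{\vec k}(x) = \int_{G^\times}\tilde T_{\vec k}\phi(x\,\upsilon^\times(y)^{-1})\,\Kern_{L^\times}(m\chi_{\vec k})(y)\,dy.\]
The rescaled multiplier $(m\chi_{\vec k})\circ\gimel_{\vec{2^k}}$ is supported in a fixed product of annuli and satisfies $\|(m\chi_{\vec k})\circ\gimel_{\vec{2^k}}\|_{S^{\vec\beta}_{q,q}B}\lesssim A := \|m\|_{M_\gimel S^{\vec\beta}_{q,q}B}$ by definition of the $M_\gimel$-norm. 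Apply Proposition~\ref{prp:productestimates} to obtain a uniform-in-$\vec k$ bound on the correspondingly rescaled kernel in $L^2(w_{\vec\beta}^{-1})$. Undoing the rescaling via the multi-parameter analogue of~\eqref{eq:homogeneity}, applying Cauchy-Schwarz in the weighted pairing, and exploiting the tensor-product form $w_{\vec\beta} = w_{\vec\beta,1}\otimes\cdots\otimes w_{\vec\beta,\ell}$ of the weight, I obtain
\[|f_{\vec k}(x)|^2 \leq CA^2\,M^\flat_1 M^\flat_2\cdots M^\flat_\ell(|\tilde T_{\vec k}\phi|^2)(x),\]
where $M^\flat_l$ is the maximal operator on $G$ obtained by transferring, via the right-translation action of $G_l$ on $L^p(G)$ through $\upsilon_l$, the maximal operator on $G_l$ associated with the M-admissible weight $w_{\vec\beta,l}$.

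By the Coifman-Weiss transference principle, applicable since each $G_l$ is nilpotent and hence amenable and acts by isometries on $L^p(G)$, every $M^\flat_l$ is bounded on $L^p(G)$ for $1 < p < \infty$, and a vector-valued version of transference together with standard Fefferman-Stein arguments upgrades this to $\ell^r$-vector-valued boundedness for $1 < r < \infty$. Writing $M^\flat_\star = M^\flat_1\cdots M^\flat_\ell$ and $M^\flat_{\star,(2)} h := (M^\flat_\star(h^2))^{1/2}$, iterating the vector-valued Fefferman-Stein inequality across the $\ell$ factors gives
\[\bigl\|\bigl(\textstyle\sum_{\vec k}(M^\flat_{\star,(2)} h_{\vec k})^2\bigr)^{1/2}\bigr\|_p \leq C_p\bigl\|\bigl(\textstyle\sum_{\vec k}|h_{\vec k}|^2\bigr)^{1/2}\bigr\|_p \qquad (p > 2).\]
Applying this with $h_{\vec k} = |\tilde T_{\vec k}\phi|$ and combining with the Littlewood-Paley upper bound $\|(\sum_{\vec k}|\tilde T_{\vec k}\phi|^2)^{1/2}\|_p \leq C\|\phi\|_p$ (Lemma~\ref{lem:littlewoodpaley} for $\tilde T_{\vec k}$, proved identically), one concludes $\|m(L^\flat)\phi\|_p \leq C_p A\|\phi\|_p$ for $p > 2$. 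The case $p = 2$ is immediate from $\|m\|_\infty \lesssim A$ (by Sobolev embedding, cf.\ Corollary~\ref{cor:hypothesescomparison}), and $1 < p < 2$ follows by duality since $m(L^\flat)^* = \overline m(L^\flat)$ and the Marcinkiewicz norm is invariant under complex conjugation.

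The main obstacle is the vector-valued Fefferman-Stein estimate for the composed transferred maximal operator $M^\flat_{\star,(2)}$: this requires a vector-valued version of the Coifman-Weiss transference principle to move $\ell^r$-bounds from $G_l$ to $G$, and then iteration across the $\ell$ factors. Related iterated maximal estimates on the genuine product $G^\times$ are implicit in the work of Christ-M\"uller \cite{christ_strong_1992} already invoked in Proposition~\ref{prp:HPKsatisfaction}; the adaptation to the present non-product setting of $G$ relies on the convolution-like representation of Proposition~\ref{prp:convolutionproduct} as the sole structural input beyond transference.
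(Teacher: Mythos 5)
Your overall architecture is the same as the paper's: Littlewood--Paley operators $T_{\vec k}=\chi_{\vec k}(L^\flat)$, reduction of $m(L^\flat)$ to the pieces at scale $\vec k$, the representation of Proposition~\ref{prp:convolutionproduct}(ii), Cauchy--Schwarz against the product weight $w_{\vec\beta}$ from Proposition~\ref{prp:productestimates}, and transference of the maximal operators associated with the M-admissible weights $w_{\vec\beta,l}$. The decomposition $m(L^\flat)\phi=\sum_{\vec k}T_{\vec k}f_{\vec k}$ with fattened cut-offs, handled by the second half of Lemma~\ref{lem:khinchin}, is a legitimate variant of the paper's use of $T_{\vec k}m(L^\flat)=f_{\vec k}(L^\flat)T_{\vec k}$ and the lower square-function bound of Lemma~\ref{lem:littlewoodpaley}, and the rescaling/support bookkeeping for $(m\chi_{\vec k})\circ\gimel_{2^{\vec k}}$ is fine. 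The cases $p=2$ and $1<p<2$ are also handled correctly.

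The genuine gap is the step you yourself flag as ``the main obstacle'': after Cauchy--Schwarz you pass immediately to the pointwise bound $|f_{\vec k}(x)|^2\leq CA^2\,M^\flat_1\cdots M^\flat_\ell(|\tilde T_{\vec k}\phi|^2)(x)$, and you then need the $\ell^1$-valued (Fefferman--Stein type) inequality
\[\Bigl\|\textstyle\sum_{\vec k}M^\flat_1\cdots M^\flat_\ell(u_{\vec k})\Bigr\|_{p/2}\leq C\Bigl\|\textstyle\sum_{\vec k}u_{\vec k}\Bigr\|_{p/2}\]
for the transferred composed maximal operator. This does not follow from the hypotheses: \HPK{s_l}{d_l} only asserts that $\langle\cdot\rangle_{G_l}^{-\gamma_\beta}u_\beta$ is M-admissible, i.e.\ \emph{scalar} $L^p(G_l)$-boundedness of the associated maximal operator, and scalar boundedness of a sublinear maximal operator does not in general yield vector-valued or weighted Fefferman--Stein inequalities (those require additional structure, e.g.\ covering arguments, which are not available for an abstract M-admissible weight, let alone after transference through the non-homomorphic map $\upsilon^\times$). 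Nor can you recover it by duality once the supremum has been taken, since $M^\flat_l$ is not linear. The fix is to postpone the supremum: keep the first Cauchy--Schwarz factor in the form $\pi_1(w_{\vec\beta,1,k_1})\cdots\pi_\ell(w_{\vec\beta,\ell,k_\ell})(|\tilde T_{\vec k}\phi|^2)(x)$, sum in $\vec k$, pair against $\psi\in L^{(p/2)'}(G)$, and use $w_{\vec\beta}=w_{\vec\beta}^*$ to move the \emph{linear}, positivity-preserving averaging operators onto $\psi$; only at that point dominate $\sup_k|\pi_l(w_{\vec\beta,l,k})\psi|$ by $M_{\vec\beta,l}(|\psi|)$, so that only the scalar $L^{(p/2)'}(G)$-boundedness of each transferred maximal operator (which transference does give from M-admissibility) is needed. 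This is exactly how the paper closes the argument; with that substitution your proof goes through, but as written the vector-valued maximal estimate is unproven and not a consequence of the stated hypotheses.
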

\begin{proof}
Choose a non-negative $\zeta \in \D(\R)$ with $\supp \zeta \subseteq [1/4,4]$ and such that $\zeta \equiv 1$ on $[1/2,2]$. For $l=1,\dots,\ell$, set $\eta_l(\lambda) = \zeta(|\lambda|_{\epsilon_l})$ and $\eta = \eta_1 \otimes \dots \otimes \eta_\ell$. If we set $m_{\vec{k}} = (m \circ \gimel_{(2^{k_1},\dots,2^{k_\ell})}) \eta$, $f_{\vec{k}} = m_{\vec{k}} \circ \gimel_{(2^{-k_1},\dots,2^{-k_\ell})}$ for $\vec k \in \Z^{\ell}$, then we have $\chi_{\vec{k}} m = f_{\vec{k}} \chi_{\vec{k}}$, so that $T_{\vec{k}} m(L^\flat) = f_{\vec{k}}(L^\flat) T_{\vec{k}}$.

Let $w_{\vec{\beta}} = w_{\vec{\beta},1} \otimes \dots \otimes w_{\vec{\beta},\ell} \in L^1(G^\times)$ be given by Proposition~\ref{prp:productestimates}. Set $w_{\vec{\beta},l,k} = 2^{-kQ_{\delta_l}} w_{\vec{\beta},l} \circ \delta_{l,2^{-k}}$ for $k \in \Z$, $l =1,\dots,\ell$, and let $w_{\vec{\beta},\vec{k}} = w_{\vec{\beta},1,k_1} \otimes \dots \otimes w_{\vec{\beta},\ell,k_\ell}$ for $\vec{k} \in \Z^\ell$. For $l=1,\dots,\ell$, if $\pi_l$ denotes the unitary representation of $G_l$ on $L^2(G)$ induced by the homomorphism $\upsilon_l$, since $w_{\vec{\beta},l}$ is M-admissible on $G_l$, then the maximal function $M_{\vec{\beta},l}$ on $G$ defined by $M_{\vec{\beta},l} \phi(x) = \sup_{k \in \Z} |\pi_l(w_{\vec{\beta},l,k}) \phi(x)|$ is bounded on $L^p(G)$ for $1 < p < \infty$, by transference \cite[Theorem 2.11]{berkson_transference_1996}.

If $\phi \in L^2 \cap C_0(G)$, then we have, by Proposition~\ref{prp:convolutionproduct}(ii) and H\"older's inequality,
\begin{multline*}
|f_{\vec{k}}(L^\flat) T_{\vec{k}} \phi(x)|^2 \leq \left(\int_{G^\times} |T_{\vec{k}} \phi(x \,\upsilon^\times(y)^{-1} )| |\Kern_{L^\times} f_{\vec{k}}(y)| \,dy\right)^2 \\
\leq \int_{G^\times} |T_{\vec{k}} \phi(x \,\upsilon^\times(y)^{-1} )|^2 w_{\vec{\beta},\vec{k}}(y) \,dy  \int_{G^\times} |\Kern_{L^\times} m_{\vec{k}}(y)|^2 w_{\vec{\beta}}^{-1}(y) \,dy\\
\leq C_{\vec{\beta},q} \|m_{\vec{k}}\|_{S^{\vec{\beta}}_{q,q}B(\R^{\vec{n}})}^2 \pi_1(w_{\vec{\beta},1,k_1}) \cdots \pi_\ell(w_{\vec{\beta},\ell,k_\ell}) (|T_{\vec{k}} \phi|^2)
\end{multline*}
thus
\begin{multline*}
\| g( m(L^\flat)\phi ) \|_p \\
\leq C_{\vec{\beta},q} \|m\|_{M_\gimel S^{\vec{\beta}}_{q,q}B} \left\| {\textstyle\sum_{\vec{k} \in \Z^\ell}} \pi_1(w_{\vec{\beta},1,k_1}) \cdots \pi_\ell(w_{\vec{\beta},\ell,k_\ell}) (|T_{\vec{k}} \phi|^2) \right\|^{1/2}_{p/2}
\end{multline*}
for $2 \leq p < \infty$.

On the other hand, since $w_{\vec{\beta}} = w_{\vec{\beta}}^*$, for every $\psi \in L^{(p/2)'}(G)$ we have
\[\begin{split}
&\left|\int_G \left({\textstyle\sum_{\vec{k} \in \Z^\ell}} \pi_1(w_{\vec{\beta},1,k_1}) \cdots \pi_\ell(w_{\vec{\beta},\ell,k_\ell}) (|T_{\vec{k}} \phi|^2) \right) \psi \,d\mu_G\right| \\
&\phantom{MMMM}\leq \sum_{\vec{k} \in \Z^\ell} \int_G \left( \pi_1(w_{\vec{\beta},1,k_1}) \cdots \pi_\ell(w_{\vec{\beta},\ell,k_\ell}) (|T_{\vec{k}} \phi|^2) \right) |\psi| \,d\mu_G\\
%&\phantom{MMMM}= \sum_{\vec{k} \in \Z^\ell} \int_G |T_{\vec{k}} \phi|^2 \left( \pi_\ell(w_{\vec{\beta},\ell,k_\ell}) \cdots \pi_1(w_{\vec{\beta},1,k_1}) (|\psi|)  \right) \,d\mu_G\\
&\phantom{MMMM}\leq \int_G \left({\textstyle\sum_{\vec{k} \in \Z^\ell}} |T_{\vec{k}} \phi|^2 \right) M_{\vec{\beta},\ell} \cdots M_{\vec{\beta},1} (|\psi|) \,d\mu_G \\
&\phantom{MMMM}\leq C_{\vec{\beta},p} \left\|{\textstyle\sum_{\vec{k} \in \Z^\ell}} |T_{\vec{k}} \phi|^2 \right\|_{p/2} \|\psi\|_{(p/2)'},
\end{split}\]
that is,
\[
\left\| {\textstyle\sum_{\vec{k} \in \Z^\ell}} \pi_\ell(w_{\vec{\beta},\ell,k_\ell}) \cdots \pi_1(w_{\vec{\beta},1,k_1}) (|T_{\vec{k}} \phi|^2) \right\|_{p/2} \leq C_{\vec{\beta},p} \| g(\phi)\|_{p}^{2}.
\]
%and finally
%\[\left\| \left( {\textstyle\sum_{\vec{k} \in \Z^\ell}} |T_{\vec{k}} m(L^\flat)\phi|^2 \right)^{1/2} \right\|_p \leq C_{\vec{\beta},p,q} \|m\|_{M_\gimel S^{\vec{\beta}}_{q,q}B} \left\| \left({\textstyle\sum_{\vec{k} \in \Z^\ell}} |T_{\vec{k}} \phi|^2 \right)^{1/2}\right\|_p\]
Putting all together, and applying Lemma~\ref{lem:littlewoodpaley}, we get the conclusion for $2 \leq p < \infty$ (notice that $E^\flat(\R^{\vec{n}} \setminus X_{\vec{n}}) m(L^\flat) = m(L^\flat)$ because $m|_{X_{\vec{n}}} = 0$). Thus we are done when $m(L^\flat)$ is self-adjoint, i.e., when $m$ is real-valued; in the general case, one can decompose $m$ in its real and imaginary parts and then apply the previous result to each part.
\end{proof}

The hypothesis $m|_{X_{\vec{n}}} = 0$ in Theorem~\ref{thm:marcinkiewicz} does not have an analogue in Theorem~\ref{thm:mihlinhoermander}, because the spectral measure of the origin for a homogeneous weighted subcoercive system on a homogeneous group is zero. On the other hand, if $E_l^\flat$ is the joint spectral resolution of $L_{l,1}^\flat,\dots,L_{l,n_l}^\flat$, then $E_l^\flat(\{0\})$ needs not be zero. However we have the following

\begin{prp}\label{prp:zeroprojection}
$E_l^\flat(\{0\})$ is bounded on $L^p(G)$ for $1 \leq p \leq \infty$. If moreover $\overline{\upsilon_l(G_l)}$ is not compact in $G$, then $E_l^\flat(\{0\}) = 0$.
\end{prp}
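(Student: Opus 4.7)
The plan is to identify the range $V := E_l^\flat(\{0\}) L^2(G)$ of the joint-kernel projection with the subspace of vectors fixed by the unitary representation $\pi_l$ of $G_l$ on $L^2(G)$ determined by $\pi_l(y) \phi = \RA_{\upsilon_l(y)} \phi$ (so that $L_{l,j}^\flat = d\pi_l(L_{l,j})$), and then to split the analysis on whether $H := \overline{\upsilon_l(G_l)}$ is compact in $G$.

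To establish the identification, I would first note that each $L_{l,j}^\flat$ commutes with $\pi_l$, so $V$ is $\pi_l$-invariant and $\pi_l|_V$ is a subrepresentation. Since $G_l$ is a homogeneous and hence simply connected nilpotent Lie group, it is of Type~I, and so $\pi_l|_V$ admits a direct integral decomposition $\int^\oplus \pi_\lambda\,d\mu(\lambda)$ into irreducibles. A vector $\phi = \int^\oplus \phi_\lambda\,d\mu$ lies in $V$ iff $d\pi_\lambda(L_{l,j}) \phi_\lambda = 0$ for every $j$ and $\mu$-a.e.\ $\lambda$; hypoellipticity of the weighted subcoercive operator $p_0(L_l) = p_*(L_l) + \sum_j L_{l,j}^2 + 1$ promotes each such $\phi_\lambda$ to a smooth vector of $\pi_\lambda$, and the multi-variate Rockland condition for the system $L_{l,1},\dots,L_{l,n_l}$ (recalled in the introduction) then forces $\pi_\lambda$ to be trivial wherever $\phi_\lambda \neq 0$. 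Thus $V$ consists of $\pi_l$-fixed vectors, and by strong continuity of $\pi_l$ combined with density of $\upsilon_l(G_l)$ in $H$, equivalently of vectors $\phi \in L^2(G)$ with $\RA_h \phi = \phi$ for every $h \in H$.

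With this identification in hand, when $H$ is compact I would identify the projection onto the $H$-fixed subspace with the averaging operator $A \phi(x) := \int_H \phi(xh)\,dh$ (normalized Haar measure on $H$); by uniqueness of orthogonal projections, $E_l^\flat(\{0\}) = A$ on $L^2(G)$. Right-invariance of Haar measure on $G$ makes each $\RA_h$ an isometry of $L^p(G)$ for every $p \in [1,\infty]$, and Minkowski's integral inequality then shows that $A$ is a contraction on each $L^p(G)$. When instead $H$ is non-compact, I would show $V = \{0\}$, making $E_l^\flat(\{0\}) = 0$ trivially bounded. For $\phi \in V$ and any compact $K \subseteq G$, right-invariance of $\mu$ together with $H$-invariance of $\phi$ yields
\[\int_{Kh} |\phi|^2 \,d\mu = \int_K |\phi(yh)|^2 \,d\mu(y) = \int_K |\phi|^2 \,d\mu \qquad\text{for every } h \in H;\]
since $K^{-1} K$ is compact and $H$ is not, one can inductively choose $h_n \in H \setminus \bigcup_{k<n} K^{-1} K h_k$ so that the translates $K h_n$ are pairwise disjoint, whence $\sum_n \int_K |\phi|^2 \,d\mu \leq \|\phi\|_2^2$ forces $\int_K |\phi|^2 \,d\mu = 0$ and thus $\phi = 0$.

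The main obstacle I anticipate is the clean identification of $V$ with the $H$-fixed subspace: it requires combining the Type-I direct-integral decomposition with the multi-variate Rockland condition and some elliptic regularity, whereas the subsequent compact/non-compact case analysis is routine once that identification is in place.
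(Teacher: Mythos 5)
Your non-compact argument (pairwise disjoint right translates $Kh_n$ of a compact set, forcing $\int_K|\phi|^2\,d\mu=0$) is exactly the paper's, and your overall strategy is reasonable, but there is one genuine flaw in the step you yourself flag as the crux: the identification of $V=E_l^\flat(\{0\})L^2(G)$ with the $\pi_l$-fixed vectors. You start by asserting that each $L_{l,j}^\flat$ commutes with $\pi_l$, so that $V$ is $\pi_l$-invariant and $\pi_l|_V$ can be decomposed into irreducibles. This is false in general: $L_{l,j}^\flat=d\pi_l(L_{l,j})$ is left-invariant on $G$, while $\pi_l(y)=\RA_{\upsilon_l(y)}$ is a right translation, and $\pi_l(y)\,d\pi_l(L_{l,j})\,\pi_l(y)^{-1}=d\pi_l(\Ad(y)L_{l,j})$, which differs from $d\pi_l(L_{l,j})$ unless $L_{l,j}$ is $\Ad$-invariant (already the sublaplacian on the Heisenberg group is not). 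So you cannot restrict $\pi_l$ to $V$ before knowing that $V$ is the fixed-vector space. The repair is standard: decompose the whole representation $\pi_l$ on $L^2(G)$ as a direct integral of irreducibles (Type I), use that the joint spectral resolution of the system decomposes along the direct integral (the essential self-adjointness and strong commutativity ``in every unitary representation'' provided by \cite{martini_spectral}), and then run your regularity-plus-Rockland argument componentwise; this is in substance what the paper invokes when it says that one proceeds as in the proof of \cite[Theorem~5.2]{martini_spectral} to get $\pi_l(y)f=f$. Note also that for your compact case you need the reverse inclusion as well (fixed vectors lie in $V$), which you do not argue; it is easy, since a fixed vector has constant, hence smooth, orbit map and the homogeneous operators $L_{l,j}$ have no constant term, but it must be said, because identifying $E_l^\flat(\{0\})$ with the averaging projection requires equality of the two subspaces.

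Beyond this, your route to the $L^p$-boundedness is genuinely different from the paper's, and it is worth comparing. The paper makes no compact/non-compact dichotomy and uses no representation theory for this part: it takes $\psi\in\D(\R^{n_l})$ with $\psi(0)=1$, observes that $\psi_t=\psi\circ\epsilon_{l,t}$ gives $\psi_t(L_l^\flat)\to E_l^\flat(\{0\})$ strongly on $L^2(G)$, writes $\psi_t(L_l^\flat)=\pi_l(\Kern_{L_l}\psi_t)$, and uses that $\|\Kern_{L_l}\psi_t\|_{L^1(G_l)}=\|\Kern_{L_l}\psi\|_{L^1(G_l)}<\infty$ independently of $t$ (by the homogeneity \eqref{eq:homogeneity} and Theorem~\ref{thm:weightedestimates}), concluding by a duality/limit argument; this yields a uniform bound on $\|E_l^\flat(\{0\})\|_{p\to p}$ with no structure theory at all. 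Your alternative --- averaging over the compact closure $H=\overline{\upsilon_l(G_l)}$, a contraction on every $L^p(G)$, in the compact case, and vanishing in the non-compact case --- is correct once the fixed-vector identification is fully justified, and it even gives operator norm $1$; the price is that it makes the boundedness statement depend on the hardest ingredient of the proof, which the paper needs only for the vanishing statement.
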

\begin{proof}
Choose $\psi \in \D(\R^{n_l})$ with $\psi(0) = 1$, so that $\psi_t = \psi \circ \epsilon_{l,t} \to \chr_{\{0\}}$ pointwise for $t \to +\infty$, and then $\psi_t(L_l^\flat) \to E_l^\flat(\{0\})$ strongly as operators on $L^2(G)$. By \cite[Proposition 3.7]{martini_spectral} we have $\psi_t(L_l^\flat) = \pi_l(\Kern_{L_l} \psi_t)$, thus
\[\|\psi_t(L_l^\flat)\|_{L^p(G)\to L^p(G)} \leq \|\Kern_{L_l} \psi_t\|_{L^1(G_l)} = \|\Kern_{L_l} \psi\|_{L^1(G_l)} < \infty\]
by \eqref{eq:homogeneity} and Theorem~\ref{thm:weightedestimates}. For every $f \in L^2 \cap L^p(G)$ and $g \in L^2 \cap L^{p'}(G)$, we then have
\[|\langle E_l^\flat(\{0\}) f, g \rangle| = \lim_{t \to +\infty} |\langle \psi_t(L_l^\flat) f, g \rangle| \leq \|\Kern_{L_l} \psi\|_{L^1(G_l)} \|f\|_{L^p(G)} \|g\|_{L^{p'}(G)},\]
which gives the required boundedness of $E^\flat_l(\{0\})$.

Suppose now that $\overline{\upsilon_l(G_l)}$ is not compact and that $E_l^\flat(\{0\})f = f$ for some $f \in L^2(G)$. This means that $d\pi_l(L_{l,1}) f = \dots = d\pi_l(L_{l,n_l})f = 0$, and proceeding analogously as in the proof of \cite[Theorem~5.2]{martini_spectral} one gets that $\pi_l(y) f = f$ for every $y \in G_l$. If $f \neq 0$, we can find a compact $K \subseteq G$ such that $\int_K |f(x)|^2 \,dx \neq 0$; on the other hand, since $\overline{\upsilon_l(G_l)}$ is not compact, it is easy to construct inductively a sequence $\{y_k\}_{k \in \N}$ in $G_l$ such that the sets $K \upsilon_l(y_k)$ for $k \in \N$ are pairwise disjoint, but then
\[\int_G |f(x)|^2 \,dx \geq \sum_{k \in \N} \int_{K \upsilon_l(y_k)} |f(x)|^2 \,dx = \sum_{k \in \N} \int_{K} |f(x)|^2 \,dx = \infty,\]
contradiction. Hence $f = 0$, and then $E_l^\flat(\{0\}) = 0$ by arbitrariness of $f$.
\end{proof}

Proposition~\ref{prp:zeroprojection} allows to relax the hypothesis $m|_{X_{\vec{n}}} = 0$ in Theorem~\ref{thm:marcinkiewicz}. Namely, for $I \subseteq \{1,\dots,\ell\}$, let $\vec{n}_I = (n_l)_{l \in I}$, so that $\R^{\vec{n}_I} = \prod_{l \in I} \R^{n_l}$; let moreover $\iota_I : \R^{\vec{n}_I} \to \R^{\vec{n}}$ be the canonical linear embedding, and define on $\R^{\vec{n}_I}$ the system of multi-variate dilations $\gimel_{I,(t_l)_{l \in I}}= \prod_{l \in I} \epsilon_{l,t_l}$. Then the decomposition
\[m(L^\flat) = \sum_{I \subseteq \{1,\dots,\ell\}} \prod_{l \notin I} E^\flat_l(\{0\}) \prod_{l \in I} E^\flat_l(\R^{n_l}\setminus \{0\}) (m \circ \iota_I)(L^\flat_I),\]
where $L_I^\flat = (L_{l,j}^\flat)_{l \in I, 1 \leq j \leq n_l}$, shows that the $L^p$-boundedness of $m(L^\flat)$ can be obtained by applying Theorem~\ref{thm:marcinkiewicz} to the subsystems $L_I^\flat$ of $L^\flat$:

\begin{cor}\label{cor:marcinkiewicz}
Suppose that, for $l=1,\dots,\ell$, the homogeneous group $G_l$, with the system $L_{l,1},\dots,L_{l,n_l}$, satisfies \HPK{s_l}{d_l}. If $q \in [1,\infty]$ and
\[\vec{\beta} > \vec{s} + \frac{\vec{n}}{q} - \frac{\vec{d}}{\max\{2,q\}},\]
then, for every Borel $m : \R^{\vec{n}} \to \C$ we have
\begin{equation}\label{eq:multimarcinkiewicz}
\|m(L^\flat)\|_{p \to p} \leq C_{\vec{\beta},p,q} \sum_{I \in \mathcal{I}} \|m \circ \iota_I\|_{M_{\gimel_I} S^{\vec{\beta_I}}_{q,q}B},
\end{equation}
where $\mathcal{I}$ is the set of the $I \subseteq \{1,\dots,\ell\}$ such that $\prod_{l \notin I} E^\flat_l(\{0\}) \neq 0$, and where $\|m \circ \iota_I\|_{M_{\gimel_I} S^{\vec{\beta_I}}_{q,q}B} = |m(0)|$ for $I = \emptyset$.
\end{cor}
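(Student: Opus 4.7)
The plan is to exploit the spectral decomposition displayed immediately before the statement, and to bound each summand indexed by $I \subseteq \{1,\dots,\ell\}$ separately via Theorem~\ref{thm:marcinkiewicz} applied to the subsystem $L_I^\flat$, combined with the $L^p$-boundedness of the spectral projections $E_l^\flat(\{0\})$ provided by Proposition~\ref{prp:zeroprojection}. Terms with $I \notin \mathcal{I}$ vanish by the very definition of $\mathcal{I}$, and the degenerate case $I = \emptyset$ contributes a harmless constant term proportional to $|m(0)|$.

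The first step is to justify the decomposition rigorously. Since the operators $L_{l,j}^\flat$ commute strongly, their joint spectral resolution factors (up to the natural identification) as the product of the resolutions $E_l^\flat$ of the subsystems $L_l^\flat$ on $\R^{n_l}$. Expanding
\[1 = \prod_{l=1}^\ell \bigl( E_l^\flat(\{0\}) + E_l^\flat(\R^{n_l}\setminus\{0\}) \bigr) = \sum_{I \subseteq \{1,\dots,\ell\}} E_I,\]
where $E_I = \prod_{l \notin I} E_l^\flat(\{0\}) \prod_{l \in I} E_l^\flat(\R^{n_l}\setminus\{0\})$, yields $m(L^\flat) = \sum_I E_I \, m(L^\flat)$. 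By the joint functional calculus, $E_I\,m(L^\flat) = E_I\,\tilde m_I(L_I^\flat)$, where $\tilde m_I$ is the Borel function on $\R^{\vec{n}_I}$ coinciding with $m \circ \iota_I$ off $X_{\vec{n}_I}$ and vanishing on $X_{\vec{n}_I}$: the projection $E_I$ forces $\lambda_l = 0$ for $l \notin I$ and $\lambda_l \neq 0$ for $l \in I$, and on this subset the values of $m$ depend only on $(\lambda_l)_{l \in I}$ through $m \circ \iota_I$.

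For each nonempty $I \in \mathcal{I}$, $\tilde m_I$ vanishes on $X_{\vec{n}_I}$ by construction, and its Marcinkiewicz norm equals $\|m \circ \iota_I\|_{M_{\gimel_I} S^{\vec{\beta_I}}_{q,q}B}$, because the cut-offs $\eta_l$ defining this norm are supported away from the origin and hence do not see the values on $X_{\vec{n}_I}$. Theorem~\ref{thm:marcinkiewicz} applied to $\tilde m_I$ and to the subsystem $L_I^\flat$ (arising from the homomorphisms $(\upsilon_l)_{l \in I}$ with each factor $G_l$ satisfying \HPK{s_l}{d_l}) then gives
\[\|\tilde m_I(L_I^\flat)\|_{p \to p} \leq C_{\vec{\beta_I},p,q}\,\|m \circ \iota_I\|_{M_{\gimel_I} S^{\vec{\beta_I}}_{q,q}B},\]
while Proposition~\ref{prp:zeroprojection} ensures that $E_l^\flat(\{0\})$, and therefore $E_l^\flat(\R^{n_l}\setminus\{0\})$ and the composite projection $E_I$, is bounded on $L^p(G)$ for $1 < p < \infty$. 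The case $I = \emptyset \in \mathcal{I}$ is degenerate: the subsystem is empty and $(m \circ \iota_\emptyset)(L_\emptyset^\flat)$ is the multiplication by the scalar $m(0)$, contributing the expected $C|m(0)|$. Summing the resulting bounds over $I \in \mathcal{I}$ yields \eqref{eq:multimarcinkiewicz}; the only real technical point is the identification $E_I m(L^\flat) = E_I \tilde m_I(L_I^\flat)$, which is a careful but routine application of the joint functional calculus relative to the product structure of the spectral resolution.
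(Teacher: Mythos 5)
Your proposal is correct and follows essentially the same route as the paper, which proves the corollary precisely by the displayed decomposition $m(L^\flat)=\sum_I E_I\,(m\circ\iota_I)(L^\flat_I)$ preceding the statement, together with Proposition~\ref{prp:zeroprojection} for the $L^p$-boundedness of the projections and Theorem~\ref{thm:marcinkiewicz} applied to each subsystem $L_I^\flat$. Your explicit introduction of $\tilde m_I$ (vanishing on $X_{\vec{n}_I}$ so that Theorem~\ref{thm:marcinkiewicz} literally applies, with unchanged Marcinkiewicz norm) is a correct and welcome elaboration of a point the paper leaves implicit.
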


In particular, if all the $\overline{\upsilon_l(G_l)}$ are not compact, then the hypothesis $m|_{X_{\vec{n}}} = 0$ in Theorem~\ref{thm:marcinkiewicz} can be dropped.

We conclude the section with a comparison of the Mihlin-H\"ormander and Marcin\-kiewicz conditions, which constitute the hypotheses of Theorems~\ref{thm:mihlinhoermander} and \ref{thm:marcinkiewicz} respectively: we obtain that, under suitable hypotheses on the orders of smoothness, a Marcinkiewicz condition is weaker than the corresponding Mihlin-H\"ormander condition.

\begin{prp}\label{prp:mihlinmarcinkiewicz}
If $q \in [1,\infty]$ and $\beta_l > \tilde Q_{\epsilon_l}/q$ for $l=1,\dots,\ell$, then
\[\|m\|_{M_\gimel S^{\vec{\beta}}_{q,q} B} \leq C_{q,\vec{\beta}} \|m\|_{M_\epsilon B_{q,q}^{\beta_1+\dots+\beta_\ell}},\]
where $\epsilon_t = \gimel_{(t,\dots,t)} = \epsilon_{1,t} \times \dots \times \epsilon_{\ell,t}$. 
\end{prp}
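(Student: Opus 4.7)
The strategy is to reduce the multi-parameter Marcinkiewicz-type norm on the left to the single-parameter Mihlin--H\"ormander norm on the right by exploiting the largest scale among $t_1,\dots,t_\ell$. Fix $\vec{t}=(t_1,\dots,t_\ell)$ with $t_l>0$, set $t=\max_l t_l$ and $s_l=t_l/t\in(0,1]$. Since $\epsilon_{l,t_l}=\epsilon_{l,t}\circ\epsilon_{l,s_l}$, one has $\gimel_{\vec{t}}=\epsilon_t\circ\gimel_{\vec{s}}$, and therefore $m\circ\gimel_{\vec{t}}=(m\circ\epsilon_t)\circ\gimel_{\vec{s}}$. If the $\eta_l$ are chosen with $\supp\eta_l\subseteq\{1/2\le|\lambda_l|_{\epsilon_l}\le 2\}$, the image $\gimel_{\vec{s}}(\supp(\eta_1\otimes\dots\otimes\eta_\ell))$ is contained in the fixed compact set
\[K_0=\{\mu\in\R^{\vec{n}}:|\mu_l|_{\epsilon_l}\le 2\text{ for all }l,\ \max_l|\mu_l|_{\epsilon_l}\ge 1/2\},\]
which is bounded away from the origin precisely because some $s_{l^*}$ equals $1$. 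I would pick once and for all a $\tilde\eta\in\D(\R^{\vec{n}})$, independent of $\vec{s}$, with $\tilde\eta\equiv 1$ on $K_0$ and $\supp\tilde\eta\subseteq\R^{\vec{n}}\setminus\{0\}$; then, setting $f=(m\circ\epsilon_t)\tilde\eta$, one has $(m\circ\gimel_{\vec{t}})\,\eta_1\otimes\dots\otimes\eta_\ell=(f\circ\gimel_{\vec{s}})\,\eta_1\otimes\dots\otimes\eta_\ell$.

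The plan then rests on two auxiliary facts about the Besov space of dominating mixed smoothness. The first is a multi-variate version of Lemma~\ref{lem:smalldilations}: for $\beta_l>\tilde Q_{\epsilon_l}/q$ and $s_l\in(0,1]$,
\[\sup_{0<s_l\le 1}\|(g\circ\gimel_{\vec{s}})\,\eta_1\otimes\dots\otimes\eta_\ell\|_{S^{\vec{\beta}}_{q,q}B(\R^{\vec{n}})}\le C\|g\|_{S^{\vec{\beta}}_{q,q}B(\R^{\vec{n}})},\]
which I would prove by iteration on $\ell$, applying Lemma~\ref{lem:smalldilations} along each factor and invoking a Fubini-type characterization of the $S^{\vec{\beta}}_{q,q}B$ norm as an iterated Besov norm in the factor variables (cf.\ \cite{schmeisser_topics_1987,schmeisser_recent_2007,sickel_tensor_2009}). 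The second is the continuous embedding
\[B^{\beta_1+\dots+\beta_\ell}_{q,q}(\R^{\vec{n}})\hookrightarrow S^{\vec{\beta}}_{q,q}B(\R^{\vec{n}}),\]
proved by comparing Littlewood--Paley decompositions: a product-type piece $\Delta_{\vec{j}}g$ meets an isotropic annulus $\{|\xi|_\epsilon\sim 2^k\}$ only when $k\sim\max_lj_l$, and for each fixed $k$ the weighted sum $\sum_{\vec{j}:\max_lj_l=k}2^{q\vec{\beta}\cdot\vec{j}}\lesssim 2^{q(\beta_1+\dots+\beta_\ell)k}$ by geometric summation (using $\beta_l>0$).

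Combining these two with $g=f$ yields
\[\|(f\circ\gimel_{\vec{s}})\,\eta_1\otimes\dots\otimes\eta_\ell\|_{S^{\vec{\beta}}_{q,q}B}\lesssim\|f\|_{B^{\beta_1+\dots+\beta_\ell}_{q,q}}=\|(m\circ\epsilon_t)\tilde\eta\|_{B^{\beta_1+\dots+\beta_\ell}_{q,q}}.\]
Since $\tilde\eta$ is compactly supported away from the origin, a standard change-of-cutoff argument -- writing $\tilde\eta$ as a finite sum of products $\tilde\eta\cdot(\eta_0\circ\epsilon_{2^k})$, where $\eta_0$ is the annular cutoff defining $\|\cdot\|_{M_\epsilon B^{\beta_1+\dots+\beta_\ell}_{q,q}}$, and performing a change of variables inside each Besov norm -- gives $\|(m\circ\epsilon_t)\tilde\eta\|_{B^{\beta_1+\dots+\beta_\ell}_{q,q}}\lesssim\|m\|_{M_\epsilon B^{\beta_1+\dots+\beta_\ell}_{q,q}}$. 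Taking the supremum over $\vec{t}$ yields the conclusion.

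The main obstacle will be the first auxiliary estimate. In its one-variable form (Lemma~\ref{lem:smalldilations}), the delicate step is the interpolation-of-exponents trick $p_k=p\beta/k$ used to balance the scaling powers so that no term blows up as $t\to 0$; lifting this to the mixed-smoothness setting requires a sufficiently flexible description of $S^{\vec{\beta}}_{q,q}B$ (of Fubini or tensor-product type) that isolates the dilation in each factor variable separately, with the hypothesis $\beta_l>\tilde Q_{\epsilon_l}/q$ guaranteeing applicability of the one-dimensional argument factor by factor.
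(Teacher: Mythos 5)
Your argument is correct and is essentially the proof in the paper: the same factorization $\gimel_{\vec t}=\epsilon_r\circ\gimel_{r^{-1}\vec t}$ with $r=|\vec t|_\infty$, the same insertion of a fixed auxiliary cutoff $\tilde\eta$ equal to $1$ on the image of $\supp(\eta_1\otimes\dots\otimes\eta_\ell)$ under the normalized dilations, the same multi-variate extension of Lemma~\ref{lem:smalldilations} obtained factorwise from the tensor-product (for $q<\infty$; by differences for $q=\infty$) description of $S^{\vec\beta}_{q,q}B$, and the same embedding $B^{\beta_1+\dots+\beta_\ell}_{q,q}\subseteq S^{\vec\beta}_{q,q}B$, followed by a change of cutoff for the $M_\epsilon$-norm. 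Only cosmetic differences remain (e.g.\ your Littlewood--Paley sketch of the embedding should use Euclidean rather than $|\cdot|_\epsilon$ annuli, since the Besov norms involved are isotropic), so the proposal matches the paper's route.
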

\begin{proof}
For $q < \infty$, in view of the characterization of $S^{\vec{\beta}}_{q,q} B(\R^{\vec{n}})$ as a tensor product of the $B_{q,q}^{\beta_l}(\R^{n_l})$ (cf.\ \cite[Theorem 2.2]{sickel_tensor_2009}), from Lemma~\ref{lem:smalldilations} we immediately get
\begin{equation}\label{eq:multismalldilations}
\sup_{\vec{t} > 0, \, |\vec{t}|_\infty \leq 1} \|(f \circ \gimel_{\vec{t}}) \,\eta\|_{S^{\vec{\beta}}_{q,q} B} \leq C_{\eta,q,\vec{\beta}} \|f\|_{S^{\vec{\beta}}_{q,q}B}
\end{equation}
for $\eta = \eta_1 \otimes \dots \otimes \eta_\ell \in \D(\R^{\vec{n}})$; the same holds also for $q = \infty$, as it is easily proved via the characterization by differences of the $S^{\vec{\beta}}_{\infty,\infty}B$-norm (cf.\ \cite[\S2.3.4]{schmeisser_topics_1987}).

Suppose now that $\supp \eta_l \subseteq \{ \lambda_l \in \R^{n_l} \tc a \leq |\lambda_l|_\infty \leq b\}$ for some $b > a > 0$ and $l=1,\dots,\ell$, and take $\tilde\eta \in \D(\R^{\vec{n}})$ such that $\tilde\eta|_{\{\lambda \tc a \leq |\lambda|_\infty \leq b\}} \equiv 1$. If $\vec{t} > 0$ and $|\vec{t}|_\infty = 1$, then $(\tilde \eta \circ \gimel_{\vec{t}}) \eta = \eta$, thus from \eqref{eq:multismalldilations} we get
\begin{equation}\label{eq:multifixeddilations}
\sup_{\vec{t} > 0, \, |\vec{t}|_\infty = 1} \|(f \circ \gimel_{\vec{t}}) \,\eta\|_{S^{\vec{\beta}}_{q,q} B} \leq C_{\eta,q,\vec{\beta}} \|f \tilde\eta\|_{S^{\vec{\beta}}_{q,q}B}.
\end{equation}

For an arbitrary $\vec{t} > 0$, set $r = |\vec{t}|_\infty$, so that $|r^{-1} \vec{t}|_\infty = 1$; then we have $m \circ \gimel_{\vec{t}} = (m \circ \epsilon_r) \circ \gimel_{r^{-1} \vec{t}}$, so that, by \eqref{eq:multifixeddilations} applied to $f = m \circ \epsilon_r$,
\[\|(m \circ \gimel_{\vec{t}}) \, \eta\|_{S^{\vec{\beta}}_{q,q} B} \leq C_{\eta,q,{\vec{\beta}}} \|(m \circ \epsilon_r) \, \tilde \eta\|_{S^{\vec{\beta}}_{q,q} B} \leq C_{\eta,q,{\vec{\beta}}} \|(m \circ \epsilon_r) \, \tilde \eta\|_{B^{\beta_1 + \dots + \beta_\ell}_{q,q}},\]
and the conclusion follows by a suitable choice of $\eta$ and $\tilde\eta$.
\end{proof}

Notice that a Mihlin-H\"ormander condition on $m: \R^{\vec{n}} \to \C$ gives some control also on the restriction of $m$ to $X_{\vec{n}} \setminus \{0\}$, so that it can be used to satisfy the more involved hypothesis of Corollary~\ref{cor:marcinkiewicz}. Namely, by the trace theorem for Besov spaces, under the hypothesis on $\beta_1,\dots,\beta_\ell$ of Proposition~\ref{prp:mihlinmarcinkiewicz}, if $m$ satisfies an $L^q$ Mihlin-H\"ormander condition of order $\beta_1 + \dots + \beta_\ell$, then $m \circ \iota_I$ satisfies an $L^q$ Mihlin-H\"ormander condition of order $\sum_{l \in I} \beta_l$ for $\emptyset \neq I \subseteq \{1,\dots,\ell\}$; therefore, by Proposition~\ref{prp:mihlinmarcinkiewicz}, all the summands in the right-hand side of \eqref{eq:multimarcinkiewicz}, except possibly for $|m(0)|$, are majorized by $\|m\|_{M_\epsilon B^{\beta_1 + \dots + \beta_\ell}_{q,q}}$.

\section{Examples and applications}

\subsection{Multipliers for a single operator}
Although the present work focuses on $L^p$ multipliers for systems of multiple operators, some results can be deduced also for single operators.

In view of the characterization stated in \S\ref{section:introduction}, a homogeneous weighted subcoercive system made of a single operator $L$ is simply a self-adjoint Rockland operator. Hence from Theorem~\ref{thm:mihlinhoermander} and Proposition~\ref{prp:HPsatisfaction} we get

\begin{cor}\label{cor:singleoperator}
Let $L$ be a self-adjoint Rockland operator on a homogeneous Lie group $G$. Suppose that $G$ is $h$-capacious, and let $Q_G$ be its dimension at infinity. If $m : \R \to \C$ satisfies an $L^2$ Mihlin-H\"ormander condition of order $s > (Q_G-h)/2$, then $m(L)$ is of weak type $(1,1)$ and bounded on $L^p(G)$ for $1 < p < \infty$.
\end{cor}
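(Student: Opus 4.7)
The plan is to deduce this corollary as an immediate specialization of Theorem~\ref{thm:mihlinhoermander} to the one-dimensional case $n=1$, once the appropriate instance of the hypothesis \HP{q}{s} has been checked via Proposition~\ref{prp:HPsatisfaction}. Recall from the characterization in \S\ref{section:introduction} that, on a homogeneous group, a single homogeneous, formally self-adjoint, left-invariant differential operator is a weighted subcoercive system (in the sense used throughout the paper) if and only if it satisfies the Rockland condition, i.e.\ if and only if it is a self-adjoint Rockland operator; so the operator $L$ in the hypothesis fits into the framework of the previous sections, with $n=1$.

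First I would apply the third item of Proposition~\ref{prp:HPsatisfaction} with $q=2$: since $G$ is $h$-capacious and $n=1$, the formula there becomes
\[
s \;=\; \frac{Q_G-h}{2} + \frac{1}{2} - \frac{1}{\max\{2,2\}} \;=\; \frac{Q_G-h}{2},
\]
so the hypothesis \HP{2}{(Q_G-h)/2} holds for the system consisting of the single operator $L$.

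Then I would invoke Theorem~\ref{thm:mihlinhoermander} with $q=2$: any bounded Borel function $m$ on $\R$ with $\|m\|_{M_\epsilon B_{2,2}^\beta} < \infty$ for some $\beta > (Q_G-h)/2$ gives rise to an operator $m(L)$ of weak type $(1,1)$ and bounded on $L^p(G)$ for $1 < p < \infty$. In view of the convention adopted at the end of the introduction, the $L^2$ Mihlin-H\"ormander condition of order $s > (Q_G-h)/2$ is precisely (or, more accurately, essentially equivalent up to an arbitrarily small loss in the order, which is harmless because the inequality on $s$ is strict) the condition $\|m\|_{M_\epsilon B_{2,2}^\beta} < \infty$ for some $\beta$ strictly above $(Q_G-h)/2$, so the hypothesis of Theorem~\ref{thm:mihlinhoermander} is met and the conclusion follows. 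There is no serious obstacle here: the corollary is a direct combination of Proposition~\ref{prp:HPsatisfaction} (which supplies the improved regularity threshold for $h$-capacious groups) and the Calder\'on--Zygmund machinery encapsulated in Theorem~\ref{thm:mihlinhoermander}.
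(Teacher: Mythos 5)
Your argument is exactly the paper's: the corollary is stated there as an immediate consequence of Proposition~\ref{prp:HPsatisfaction} (third item, with $n=1$, $q=2$, giving \HP{2}{(Q_G-h)/2}) and Theorem~\ref{thm:mihlinhoermander}, after observing that a single-operator homogeneous weighted subcoercive system is precisely a self-adjoint Rockland operator. Your handling of the Sobolev-versus-Besov formulation of the Mihlin--H\"ormander condition also matches the convention set out at the end of the introduction, so the proposal is correct and essentially identical to the paper's proof.
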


For a general (positive) Rockland operator, in the literature this result is stated either without specifying the regularity threshold \cite{hulanicki_functional_1984,stempak_weighted_1987} or for a particular class of groups \cite{hebisch_multiplier_1995}. By restricting to the case of a homogeneous sublaplacian $L$ on a stratified Lie group $G$, we recover the result of \cite{mauceri_vectorvalued_1990,christ_multipliers_1991}, where the threshold is half the homogeneous dimension of $G$.

Notice that, when a homogeneous Lie group $G$ is stratified (i.e., when the elements of degree $1$ generate the whole Lie algebra), then the homogeneous dimension coincides with $Q_G$. However, on a nilpotent Lie group $G$ there may be multiple homogeneous structures, and the homogeneous dimension $Q_\delta$ depends on the chosen automorphic dilations $\delta_t$ (for instance, if $X,Y,T$ is a basis of the Lie algebra of the Heisenberg group with $[X,Y]=T$, then we can set $\delta_t(X) = tX$, $\delta_t(Y) = t^a Y$, $\delta_t(T) = t^{1+a}T$ for all $a \geq 1$, and we have $Q_\delta = 2 + 2a$), whereas $Q_G$ is intrinsic of the Lie group structure of $G$. In fact (under the hypothesis that all the homogeneity degrees are not less than $1$) we always have $Q_\delta \geq Q_G$, with equality if and only if $G$ is stratified \cite[Proposition 2.2]{martini_spectral}.

The existence of a homogeneous sublaplacian forces $G$ to be stratified (modulo rescaling the homogeneity degrees). On the other hand, on non-stratified homogeneous groups $G$ there might exist higher-order self-adjoint Rockland operators $L$ (for instance, in the previous example of the Heisenberg group, one can take $L = (-iX)^{2a} -Y^2$ when $a \in \N$), to which Corollary~\ref{cor:singleoperator} applies, with threshold (at most) $Q_G/2$. Therefore our result is also an improvement of \cite[Corollary 7.1]{duong_plancherel-type_2002}, where the required threshold is $Q_\delta/2$.

Multiplier results for a single operator can be deduced from Theorem~\ref{thm:marcinkiewicz} too, through a sort of ``spectral mapping''; some examples in the context of a non-nilpotent Lie group are presented in the following \S\ref{subsection:nonnilpotent}.

\subsection{Plancherel measure and capacity map}\label{subsection:calculations}
In order to obtain the sharpest results from the previous multiplier theorems, properties of the Plancherel measure associated with a weighted subcoercive system and of the capacity map of a group must be investigated.

If $L_1,\dots,L_n$ is a weighted subcoercive system on a nilpotent group $G$, then the associated Plancherel measure $\sigma$ is related to the group Plancherel measure, defined on the set $\widehat G$ of (equivalence classes of) irreducible (unitary) representations of $G$. In fact, for every irreducible representation $\pi$ of $G$ on a Hilbert space $\HH_\pi$, we can find\footnote{If $\Delta = p(L_1,\dots,L_n)$ is a positive weighted subcoercive operator, then $\pi(\Kern_L(e^{-p}))$ is compact (since $G$ is CCR), thus $d\pi(\Delta)$ has discrete spectrum and finite-dimensional eigenspaces, and moreover it commutes with $d\pi(L_1),\dots,d\pi(L_n)$.} a complete orthonormal system $\{v_{\pi,\alpha}\}_\alpha$ of $\HH_\pi$ made of joint eigenvectors of $d\pi(L_1),\dots,d\pi(L_n)$. If $\lambda_{\pi,\alpha} \in \R^n$ denotes the eigenvalues of $d\pi(L_1),\dots,d\pi(L_n)$ corresponding to the eigenvector $v_{\pi,\alpha}$, then, for every $m \in \D(\R^n)$,
\[
\int_{\R^n} |m(\lambda)|^2 \,d\sigma(\lambda) = \|\breve m\|^2_{L^2(G)} = \int_{\widehat G} \|\pi(\breve m)\|_{\HS}^2 \,d\pi = \int_{\widehat G} \sum_\alpha |m(\lambda_{\pi,\alpha})|^2 \,d\pi
\]
(cf.\ \cite[Proposition 3.7]{martini_spectral}). If one is able to determine both the group Plancherel measure and the eigenvectors $v_{\pi,\alpha}$ in such a way that the function $(\pi,\alpha) \mapsto \lambda_{\pi,\alpha}$ is sufficiently regular, then the measure $\sigma$ on $\R^n$ is determined by the previous identity as the push-forward of the product of the group Plancherel measure times a counting measure.

%For nilpotent Lie groups, the Kirillov theory gives a fair amount of information about irreducible representations and the group Plancherel measure \cite{corwin_representations_1990,nielsen_unitary_1983}. However, except for some particular cases, an ``explicit'' formula for the joint eigenvalues of $L_1,\dots,L_n$ in a generic irreducible representation is not easy to find.

This route can be followed, e.g., for the free $2$-step nilpotent group on $3$ generators $N_{3,2}$, which is determined by the relations
\[[X_1,X_2] = T_3, \qquad [X_2,X_3] = T_1, \qquad [X_3,X_1]=T_2,\]
where $X_1,X_2,X_3,T_1,T_2,T_3$ is a basis of its Lie algebra $\lie{n}_{3,2}$; the dilations $\delta_t$ given by $\delta_t(X_j) = tX^j$, $\delta_t(T_j) = t^2 T_j$ define a stratification of $N_{3,2}$, so that $Q_{N_{3,2}} = Q_\delta = 9$.

If $L = -(X_1^2 + X_2^2+X_3^2)$ is the sublaplacian, $\Delta = -(T_1^2 +T_2^2+T_3^2)$ is the central Laplacian and $D = -(X_1 T_1 + X_2T_2 + X_3T_3)$, then $L,\Delta,D$ is a homogeneous weighted subcoercive system, with Plancherel measure $\sigma$ given by
\[\int_{\R^3} f \,d\sigma \\
= \sum_{\alpha \in 2\N+1} \int_0^\infty \int_{-1}^1 f\left(\lambda,\lambda^{3/2}{\textstyle\frac{\theta(1-\theta^2)}{\alpha}},\lambda^2 {\textstyle\frac{(1-\theta^2)^2}{\alpha}}\right) \,{\textstyle\frac{(1-\theta^2)^3}{8\pi^4\alpha^4}} \,d\theta \,\lambda^{7/2} \,d\lambda
\]
(cf.\ \cite{fischer_gelfand_2009}). This measure is supported on a countable family of surfaces accumulating on the axis $\R \times \{0\} \times \{0\}$, and it is not difficult to show that $\sigma$ is locally $2$-bounded on $\R^3 \setminus \{0\}$, so that the system $L,\Delta,D$ satisfies the hypothesis \HPK{9/2}{2}.

On the group $N_{3,2}$ we can also consider the system $L,-iT_1,-iT_2,-iT_3$; in this case the Plancherel measure $\sigma$ is given by
\[\int_{\R^4} f \,d\sigma = \sum_{\alpha \in 2\N+1} \int_0^\infty \int_{S^2} \int_0^1 f\left(\lambda,\lambda {\textstyle\frac{\nu\omega}{\alpha}}\right) \,{\textstyle\frac{\nu^3}{32 \pi^5 \alpha^4 \sqrt{1-\nu}}} \,d\nu \,d\omega \,\lambda^{7/2} \,d\lambda.\]
One can show that $\sigma$ is locally $\frac{7}{2}$-bounded on $\R^4 \setminus \{0\}$, so that the system $L,-iT_1,-iT_2,-iT_3$ satisfies \HPK{9/2}{7/2}.

As an example of computation of the capacity map $J$, we consider instead the $3$-step group $G_{6,23}$ of \cite{nielsen_unitary_1983}, which is defined by the relations
\[\begin{split}
&[X_6,X_5]= X_4, \qquad [X_6,X_4]=X_2, \qquad [X_6,X_3]=-X_1, \\
&[X_5,X_4]=X_1, \qquad [X_5,X_3]=X_2,
\end{split}\]
where $X_6,X_5,X_4,X_3,X_2,X_1$ is a basis of its Lie algebra $\lie{g} = \lie{g}_{6,23}$. It can be shown that the unique automorphic dilations $\delta_t$ (modulo automorphisms and rescaling) are given by $\delta_t(X_j) = t^{w_j} X_j$ with $w_6=w_5 = 1$, $w_4=w_3=2$, $w_2=w_1=3$, so that this is an example of a non-stratifiable group, with $12 = Q_\delta > Q_{G_{3,2}} = 11$.

With the notation of \S\ref{section:metivier}, we have 
\[\lie{z} = \Span\{X_2,X_1\}, \qquad \lie{y} = \Span\{X_4,X_3,X_2,X_1\}.\]
If we denote by $\bar X_6,\bar X_5,\bar X_4,\bar X_3$ and $X_2^*, X_1^*$ the bases induced by $X_6,\dots,X_1$ on $\lie{g}/\lie{z}$ and $\lie{z}^*$ respectively, then we have
\[|J(x_6 \bar X_6 + x_5 \bar X_5 + x_4 \bar X_4 + x_3 \bar X_3, t_2 X_2^* + t_1 X_1^*)|^2 = (x_6^2+x_5^2)(t_2^2+t_1^2)\]
with respect to a suitable norm on $\lie{y}^*$, therefore the dual elements $\bar X_6^*,\bar X_5^* \in (\lie{g}/\lie{z})^*$ and $X_2,X_1 \in \lie{z}$ attest that $G_{6,23}$ is 2-capacious (despite the fact that Proposition~\ref{prp:capacitycriteria} does not apply to this group). Consequently, every homogeneous weighted subcoercive system on $G_{6,23}$, such as
\[(-iX_6)^{4k}+(-iX_5)^{4k}+(-iX_3)^{2k}, \quad -iX_2, \quad -iX_1\]
for $k \in \N \setminus \{0\}$, satisfies the hypothesis \HPK{9/2}{1}.

Further examples and details may be found in \cite{martini_multipliers_2010}.

\subsection{Non-nilpotent groups}\label{subsection:nonnilpotent}

Theorem~\ref{thm:marcinkiewicz} allows one to obtain multiplier theorems also on groups which are not homogeneous, even not nilpotent. An interesting class of examples comes by considering an action of a torus $\T^d = \R^d/\Z^d$ on a homogeneous group $N$ by automorphisms which commute with dilations, and the corresponding semidirect product $N \rtimes \T^d$ (or alternatively its universal covering group $N \rtimes \R^d$).

Take for instance a diamond group $G = H_n \rtimes \T^d$ (see \cite{ludwig_dual_1995}). If $L$ is a $\T^d$-invariant homogeneous sublaplacian on $H_n$ and $U_1,\dots,U_d$ are the partial derivatives on the torus $\T^d$, then $L,-iU_1,\dots,-iU_d$ is a weighted subcoercive system on $G$, since these operators commute and they generate an algebra containing the sublaplacian $\Delta = L + (-iU_1)^2 + \dots + (-iU_d)^2$. In fact, each of the operators $L,-iU_1,\dots,-iU_d$ can be considered as a homogeneous weighted subcoercive system in itself: $L$ is Rockland on $H_n$, and therefore satisfies \HPK{(\dim H_n)/2}{1}, whereas $-iU_j$ comes from the corresponding derivative on the $j$-th factor of $\R^d$, which satisfies \HPK{1/2}{1}. By applying Theorem~\ref{thm:marcinkiewicz}, we then obtain that, if $m : \R^{1+d} \to \C$ vanishes on the coordinate hyperplanes and
\[\|m\|_{M_\gimel S^{\vec{s}}_{2,2}B(\R^{1+d})} < \infty \qquad\text{for } \vec{s} > \left(\frac{\dim H_n}{2},\frac{1}{2},\dots,\frac{1}{2}\right),\]
then $m(L,-iU_1,\dots,-iU_d)$ is bounded on $L^p(G)$ for $1 < p < \infty$.

Thanks to Corollary~\ref{cor:marcinkiewicz} and Proposition~\ref{prp:mihlinmarcinkiewicz}, this result in turn yields a multiplier theorem for the sublaplacian $\Delta$: if $m : \R \to \C$ satisfies
\[\|m\|_{M_\epsilon B^{s}_{2,2}(\R)} < \infty \qquad\text{for } s > \frac{\dim H_n + d}{2} = \frac{\dim G}{2},\]
then $m(\Delta)$ is bounded on $L^p(G)$ for $1 < p < \infty$. We remark that:
\begin{itemize}
\item this condition is sharper than the one following by the general result of Alexopoulos \cite{alexopoulos_spectral_1994}, which instead requires an $L^\infty$ condition of order $s > \frac{\dim G + 1}{2}$;
\item this is an example of a group in which the regularity threshold in a multiplier theorem can be lowered to half the topological dimension, which is neither a Heisenberg or related group, nor $SU_2$;
\item the sublaplacian $\Delta$ can be replaced by any operator of the form
\[L^{k_0} + -(iU_1)^{2k_1} + \dots + (-iU_d)^{2k_d} \qquad\text{or}\qquad L^{k_0} (-iU_1)^{k_1} \cdots (-iU_d)^{k_d}\]
for some $k_0,k_1,\dots,k_d \in \N \setminus \{0\}$, obtaining an analogous multiplier result with identical smoothness requirement.
\end{itemize}

Spectral multipliers for operators such as the complete Laplacian
\[\Delta_c = L + (-iT)^2 + (-iU_1)^2 + \dots + (-iU_d)^2,\]
where $T$ is the central derivative on $H_n$, can also be studied. By considering $L,-iT$ together as a homogeneous system on $H_n$, and each of the $-iU_j$ separately as before, one obtains a multiplier theorem for $\Delta_c$, with an $L^\infty$ condition of order $s > \frac{\dim G}{2}$.

Analogous considerations hold if one replaces $H_n$ by any M\'etivier group, and also if one takes the universal covering group $H_n \rtimes \R^d$; this last case comprises, for $d=1$, the oscillator groups. Notice that the previous result about the Laplacian $\Delta_c$, when stated on the universal covering group, is sharper than \cite{alexopoulos_spectral_1994}, since the degree of growth of the group is greater than its topological dimension.

Further examples include the plane motion group $\R^2 \rtimes \T$, and the semidirect product $N_{2,3} \rtimes \T$ determined by the action of $SO_2$ on the free $3$-step nilpotent group $N_{2,3}$ considered, e.g., in \cite[\S5.3]{martini_spectral}. In these last cases, for some distinguished sublaplacians, we still get a sharpening of the result by Alexopoulos: although the required order of smoothness is the same, our condition is expressed in terms of an $L^2$ instead of an $L^\infty$ Besov norm.

\section*{Acknowledgements}
I thank Fulvio Ricci for drawing my attention to the subject of this work, and for his continuous encouragement and support.

\bibliographystyle{abbrv}
\bibliography{../../multipliers}

\end{document}